\newcommand{\nc}{\newcommand}
\nc{\fg}{\mathfrak{f} } \nc{\vg}{\mathfrak{v} } \nc{\wg}{\mathfrak{w} }
\nc{\zg}{\mathfrak{z} } \nc{\ngo}{\mathfrak{n} } \nc{\kg}{\mathfrak{k} }
\nc{\mg}{\mathfrak{m} } \nc{\bg}{\mathfrak{b} } \nc{\ggo}{\mathfrak{g} } \nc{\eg}{\mathfrak{e} }
\nc{\ggob}{\overline{\mathfrak{g}} } \nc{\sog}{\mathfrak{so} }
\nc{\sug}{\mathfrak{su} } \nc{\spg}{\mathfrak{sp} } \nc{\slg}{\mathfrak{sl} }
\nc{\glg}{\mathfrak{gl} } \nc{\cg}{\mathfrak{c} } \nc{\rg}{\mathfrak{r} }
\nc{\hg}{\mathfrak{h} } \nc{\tg}{\mathfrak{t} } \nc{\ug}{\mathfrak{u} }
\nc{\dg}{\mathfrak{d} } \nc{\ag}{\mathfrak{a} } \nc{\pg}{\mathfrak{p} }
\nc{\sg}{\mathfrak{s} } \nc{\affg}{\mathfrak{aff} } \nc{\qg}{\mathfrak{q} } \nc{\lgo}{\mathfrak{l} }
\nc{\pca}{\mathcal{P}} \nc{\nca}{\mathcal{N}} \nc{\lca}{\mathcal{L}}
\nc{\oca}{\mathcal{O}} \nc{\mca}{\mathcal{M}} \nc{\tca}{\mathcal{T}}
\nc{\aca}{\mathcal{A}} \nc{\cca}{\mathcal{C}} \nc{\gca}{\mathcal{G}}
\nc{\sca}{\mathcal{S}} \nc{\hca}{\mathcal{H}} \nc{\bca}{\mathcal{B}}
\nc{\dca}{\mathcal{D}} \nc{\eca}{\mathcal{E}} \nc{\wca}{\mathcal{W}}
\nc{\vp}{\varphi} \nc{\ddt}{\tfrac{d}{dt}} \nc{\dsdt}{\tfrac{d^2}{dt^2}} \nc{\dds}{\frac{d}{ds}}
\nc{\dpar}{\frac{\partial}{\partial t}} \nc{\im}{\mathrm{i}}
\nc{\SO}{\mathrm{SO}} \nc{\Spe}{\mathrm{Sp}} \nc{\Sl}{\mathrm{SL}}
\nc{\SU}{\mathrm{SU}} \nc{\Or}{\mathrm{O}} \nc{\U}{\mathrm{U}} \nc{\Gl}{\mathrm{GL}}
\nc{\Se}{\mathrm{S}} \nc{\Cl}{\mathrm{Cl}} \nc{\Spin}{\mathrm{Spin}}
\nc{\Pin}{\mathrm{Pin}} \nc{\G}{\mathrm{GL}_n(\RR)} \nc{\g}{\mathfrak{gl}_n(\RR)}
\nc{\RR}{{\Bbb R}} \nc{\HH}{{\Bbb H}} \nc{\CC}{{\Bbb C}} \nc{\ZZ}{{\Bbb Z}}
\nc{\FF}{{\Bbb F}} \nc{\NN}{{\Bbb N}} \nc{\QQ}{{\Bbb Q}} \nc{\PP}{{\Bbb P}} \nc{\OO}{{\Bbb O}}
\nc{\vs}{\vspace{.2cm}} \nc{\vsp}{\vspace{1cm}} \nc{\ip}{\langle\cdot,\cdot\rangle}
\nc{\ipp}{(\cdot,\cdot)} \nc{\la}{\langle} \nc{\ra}{\rangle} \nc{\unm}{\tfrac{1}{2}}
\nc{\unc}{\tfrac{1}{4}} \nc{\und}{\tfrac{1}{16}} \nc{\no}{\vs\noindent}
\nc{\lam}{\Lambda^2(\RR^n)^*\otimes\RR^n} \nc{\tangz}{{\rm T}^{\rm Zar}}
\nc{\nor}{{\sf n}}  \nc{\mum}{/\!\!/} \nc{\kir}{/\!\!/\!\!/}
\nc{\Ri}{\tfrac{4\Ric_{\mu}}{||\mu||^2}} \nc{\ds}{\displaystyle}
\nc{\ben}{\begin{enumerate}} \nc{\een}{\end{enumerate}} \nc{\f}{\frac}
\nc{\lb}{[\cdot,\cdot]} \nc{\isn}{\tfrac{1}{||v||^2}}
\nc{\gkp}{(\ggo=\kg\oplus\pg,\ip)} \nc{\ukh}{(\ug=\kg\oplus\hg,\ip)}
\nc{\tgkp}{(\tilde{\ggo}=\kg\oplus\pg,\ip)}
\nc{\wt}{\widetilde} 
\nc{\iop}{\mathtt{i}} \nc{\jop}{\mathtt{j}}
\nc{\Hess}{\operatorname{Hess}} \nc{\ad}{\operatorname{ad}}
\nc{\Ad}{\operatorname{Ad}} \nc{\rank}{\operatorname{rk}}
\nc{\Irr}{\operatorname{Irr}} \nc{\End}{\operatorname{End}}
\nc{\Aut}{\operatorname{Aut}} \nc{\Inn}{\operatorname{Inn}}
\nc{\Der}{\operatorname{Der}} \nc{\Ker}{\operatorname{Ker}}
\nc{\Iso}{\operatorname{Iso}} \nc{\Diff}{\operatorname{Diff}}
\nc{\Lie}{\operatorname{L}} \nc{\tr}{\operatorname{tr}} \nc{\dif}{\operatorname{d}}
\nc{\sen}{\operatorname{sen}} \nc{\modu}{\operatorname{mod}}
\nc{\CRic}{\operatorname{PP}} \nc{\Cric}{\operatorname{P}} \nc{\Ricci}{\operatorname{Ric}}
\nc{\sym}{\operatorname{sym}} \nc{\herm}{\operatorname{herm}} \nc{\symac}{\operatorname{sym^{ac}}}
\nc{\symc}{\operatorname{sym^{c}}} \nc{\scalar}{\operatorname{Sc}}
\nc{\grad}{\operatorname{grad}} \nc{\ricci}{\operatorname{Rc}} \nc{\kil}{\operatorname{B}} \nc{\cas}{\operatorname{C}} \nc{\lic}{\operatorname{L}}
\nc{\Nor}{\operatorname{Norm}}  \nc{\ricc}{\operatorname{Rc^{c}}}
\nc{\Ricc}{\operatorname{Ric^{c}}} \nc{\ricac}{\operatorname{Rc^{ac}}}
\nc{\Ricac}{\operatorname{Ric^{ac}}} \nc{\Riem}{\operatorname{Rm}} \nc{\Sec}{\operatorname{Sec}}
\nc{\riccig}{\operatorname{ric^{\gamma}}} \nc{\mm}{\operatorname{m}} \nc{\Mm}{\operatorname{M}}
\nc{\Le}{\operatorname{L}} \nc{\tang}{\operatorname{T}}
\nc{\level}{\operatorname{level}} \nc{\rad}{\operatorname{r}}
\nc{\abel}{\operatorname{ab}} \nc{\CH}{\operatorname{CH}} \nc{\Cone}{{\mathcal C}} \nc{\CCone}{\operatorname{CC}} \nc{\CP}{{\mathcal P}}
\nc{\mcc}{\operatorname{mcc}} \nc{\Adj}{\operatorname{Adj}}
\nc{\Order}{\operatorname{O}}  \nc{\inj}{\operatorname{inj}} \nc{\proy}{\operatorname{pr}}
\nc{\vol}{\operatorname{vol}} \nc{\Diag}{\operatorname{Dg}} \nc{\Diagg}{\operatorname{Diag}}
\nc{\Spec}{\operatorname{Spec}} \nc{\Ima}{\operatorname{Im}} \nc{\Rea}{\operatorname{Re}}
\nc{\spann}{\operatorname{span}} \nc{\Aff}{\operatorname{Aff}} \nc{\E}{\operatorname{E}} \nc{\id}{\operatorname{id}} \nc{\dete}{\operatorname{det}} \nc{\Crit}{\operatorname{Crit}} \nc{\val}{\operatorname{val}}
\theoremstyle{plain}
\newtheorem{theorem}{Theorem}[section]
\newtheorem{proposition}[theorem]{Proposition}
\newtheorem{corollary}[theorem]{Corollary}
\newtheorem{lemma}[theorem]{Lemma}
\theoremstyle{definition}
\newtheorem{definition}[theorem]{Definition}
\theoremstyle{remark}
\newtheorem{remark}[theorem]{Remark}
\title{On the stability of homogeneous Einstein manifolds}
\author{Jorge Lauret}  
\address{FaMAF, Universidad Nacional de C\'ordoba and CIEM, CONICET (Argentina)}
\email{jorgelauret@unc.edu.ar} 
\thanks{This research was partially supported by a grant from Univ. Nac. de C\'ordoba, Argentina}
\date{\today}
\begin{document}

\maketitle

\begin{abstract}
Let $g$ be a $G$-invariant Einstein metric on a compact homogeneous space $M=G/K$.  We use a formula for the Lichnerowicz Laplacian of $g$ at $G$-invariant $TT$-tensors to study the stability type of $g$ as a critical point of the scalar curvature function.  The case when $g$ is naturally reductive is studied in special detail.     
\end{abstract}

\tableofcontents

\section{Introduction}\label{intro}

Given a compact connected differentiable manifold $M$ and a transitive action of a compact Lie group $G$ on $M$, the aim of this paper is to study the stability of $G$-invariant Einstein metrics on $M$ within the $G$-invariant setting.  It is well known that if $\mca^G_1$ denotes the finite-dimensional manifold of all unit volume $G$-invariant metrics on $M$, then $g\in\mca_1^G$ is Einstein (i.e.\ $\ricci(g)=\rho g$ for some $\rho\in\RR$, which is necessarily positive if $G$ is non-abelian) if and only if $g$ is a critical point of the scalar curvature functional 
$$
\scalar:\mca_1^G\longrightarrow \RR.  
$$ 
The $G$-action we have fixed provides a presentation $M=G/K$ of $M$ as a homogeneous space, where $K\subset G$ is the isotropy subgroup at some origin point $o\in M$.  

We start by showing in \S\ref{stabhom-sec} that 
$$
T_g\mca_1^G = T_g\Aut(G/K)\cdot g \oplus \tca\tca_g^G, 
$$
where $\Aut(G/K)\subset\Diff(M)$ is the Lie group of automorphisms of $G$ taking $K$ onto $K$, giving rise to trivial variations of $g$, and $\tca\tca_g^G:=(\Ker\delta_g\cap\Ker\tr_g)^G$ is the space of so-called TT-tensors (see \S\ref{stab-sec}) which are $G$-invariant.  It is therefore natural to say that an Einstein metric $g\in\mca_1^G$ is $G$-{\it stable} when the second derivative or Hessian of $\scalar$ satisfies that 
$$
\scalar''_g|_{\tca\tca_g^G}<0, 
$$
which in particular implies that $g$ is a local maximum of $\scalar:\mca_1^G\longrightarrow \RR$.  Recall that without assuming $G$-invariance, $g$ is called {\it stable} if $\scalar''_g$ is negative definite on $\tca\tca_g$, the infinite dimensional space of all unit volume constant scalar curvature (non-trivial) variations of $g$ (see \S\ref{stab-sec}).   

Some potential applications of establishing the $G$-stability type of $G$-invariant Einstein metrics include:  

\begin{enumerate}[{\small $\bullet$}] 
\item If $g$ is $G$-{\it non-degenerate} (i.e., $\scalar''_g|_{\tca\tca_g^G}$ is non-degenerate), then $g$ is $G$-{\it rigid}, in the sense that $g$ is an isolated point in the moduli space $\eca_1^G/\Aut(G/K)$ of $G$-invariant unit volume Einstein metrics on $M$.  The main long standing open question in the subject is whether such moduli space is always finite, which has been conjectured to hold in the multiplicity-free isotropy representation case by B\"ohm, Wang and Ziller in \cite{BhmWngZll} (note that $T_g\mca_1^G =\tca\tca_g^G$ in that case and so $\eca_1^G$ must itself be finite).  

It is worth noticing that since $\eca_1^G$ is known to be compact (see  \cite[Theorem 1.6]{BhmWngZll}), the finiteness of $\eca_1^G/\Aut(G/K)$ is equivalent to the $G$-rigidity of any $G$-invariant Einstein metric on $M$.  $G$-non-degeneracy seems to be a generic property, though this is hard to put in a rigorous statement.  

\item In the case when $g$ is $G$-{\it unstable} (i.e., $\scalar''_g(T,T)>0$ for some $T\in{\tca\tca_g^G}$), one obtains that $g$ is also unstable relative to the $\nu$-entropy functional introduced by Perelman (see \cite{CaoHe}) and so it is {\it dynamically unstable}, in the sense that there exists a nontrivial normalized Ricci flow defined on $(-\infty,0]$ which converges modulo diffeomorphisms to $g$ as $t\to-\infty$ (see \cite[Theorem 1.3]{Krn}).  Additionally, it is known that a $G$-unstable Einstein metric $g$ does not realize the Yamabe invariant of M (see \cite[Theorem 5.1]{BhmWngZll}).

$G$-instability is also an expected behavior, as suggested by the graph theorem \cite[Theorem 3.3]{BhmWngZll} and its generalization, the simplicial complex theorem \cite[Theorem 1.5]{Bhm}.  However, a rigorous result on this is still lacking.  

\item Beyond irreducible symmetric metrics and the special case when $K$ is a maximal subgroup of $G$ (see \cite{WngZll, Bhm}), $G$-stability is extremely rare if $\dim{\mca_1^G}>1$, it is considered a mere coincidence or accident by the experts.  It is for instance unknown whether there can be two non-homothetic $G$-stable Einstein metrics for a given $G$.  

\item Since the normalized Ricci flow on $\mca_1^G$ is precisely the gradient flow of $\scalar$, its dynamical behavior is mostly governed by the $G$-stability types of their fixed points, the $G$-invariant Einstein metrics (see \cite{AnsChr} and references therein).  
\end{enumerate}

As known, the second variation $\scalar''_g$ of the total scalar curvature at any Einstein metric $g$ on $M$, say with $\ricci(g)=\rho g$, coincides on $\tca\tca_g$ with $\unm(2\rho\id-\Delta_L)$, where $\Delta_L$ is the Lichnerowicz Laplacian of $g$ (see \S\ref{stab-sec}).  In \S\ref{mba-sec}, we consider the self-adjoint operator 
$$
\lic_\pg=\lic_\pg(g):\sym(\pg)^K\longrightarrow\sym(\pg)^K, 
$$
defined by $\Delta_L$ under the usual identifications, where $\ggo=\kg\oplus\pg$ is any reductive decomposition and $\sym(\pg)^K:=\{A:\pg\rightarrow\pg:A^t=A, \; [\Ad(K),A]=0\}$.  Note that the $G$-stability type of $g$ is therefore determined by how is the constant $2\rho$ suited relative to the spectrum of $\lic_\pg$.  We use moving bracket approach techniques to prove the following formula for $\lic_\pg$: 
\begin{equation}\label{Lp-intro}
\la \lic_\pg A,A\ra = \unm |\theta(A)\mu_\pg|^2 + 2\tr{\Mm_{\mu_\pg}A^2}, \qquad\forall A\in\sym(\pg), 
\end{equation}
where $\mu_\pg:= \proy_\pg\circ \lb|_{\pg\times\pg} :\pg\times\pg\longrightarrow\pg$ and the function $\Mm:\Lambda^2\pg^*\otimes\pg\rightarrow\sym(\pg)$ is the {\it moment map} from geometric invariant theory (see \cite{alek, BhmLfn}) for the representation $\theta$ of $\glg(\pg)$ given by    
$$
\theta(A)\lambda := A\lambda(\cdot,\cdot) - \lambda(A\cdot,\cdot) - \lambda(\cdot,A\cdot), \qquad \forall A\in\glg(\pg), \quad \lambda\in\Lambda^2\pg^*\otimes\pg,
$$
that is,
$$
\la\Mm_{\mu_\pg},A\ra := \unc\la\theta(A)\mu_\pg,\mu_\pg\ra, \qquad\forall A\in\glg(\pg).   
$$
This is actually the main part of Ricci curvature, the Ricci operator of the metric $g$ is given by $\Ricci(g) = \Mm_{\mu_\pg} - \unm\kil_\mu$, where $\la\kil_\mu\cdot,\cdot\ra:=\kil_{\ggo}|_{\pg\times\pg}$ and $\kil_{\ggo}$ denotes the Killing form of the Lie algebra $\ggo$.   

As a first application of formula \eqref{Lp-intro}, we focus in \S\ref{natred-sec} on the case when $g$ is naturally reductive with respect to $G$ and $\pg$.  We have in this case that 
$$
T_g\mca_1^G =\tca\tca_g^G=\sym_0(\pg)^K:=\{ A\in\sym(\pg)^K:\tr{A}=0\},
$$ 
and furthermore, the operator $\lic_\pg$ is non-negative and takes the following simpler form: 
\begin{equation}\label{Lpnr-intro}
\lic_\pg A:=-\unm\sum[\ad_\pg{X_i},[\ad_\pg{X_i},A]], \qquad\forall A\in\sym(\pg)^K,
\end{equation}
where $\{ X_i\}$ is any $g$-orthonormal basis of $\pg$ and $\ad_\pg{X_i}:=\mu_\pg(X_i,\cdot)$ (recall that naturally reductive means that $\ad_\pg{X_i}$ is skew-symmetric for all $i$).  In particular, if $g_{\kil}$ is the Killing left-invariant metric on any compact simple Lie group $G$, which satisfies $\ricci(g_{\kil})=\unc g_{\kil}$, then 
$$
\lic_\pg(g_{\kil})=\unm \cas_{\tau,-\kil_\ggo},
$$ 
where $\cas_{\tau,-\kil_\ggo}$ is the Casimir operator acting on the representation $\sym(\ggo)$ of $\ggo$ given by $\tau(X)A:=[\ad{X},A]$.  Thus the $G$-stability type of $g_{\kil}$ can be obtained by using representation theory to compute the spectrum of $\cas_{\tau,-\kil_\ggo}$ (see Table \ref{table1}).  We obtain that they are all $G$-stable, except for $\SU(n)$, $n\geq 3$ and $\Spe(n)$, $n\geq 2$, where  $g_{\kil}$ is $G$-neutrally stable of nullity $n^2-1$ and $G$-unstable of coindex $\geq \frac{2n(2n-1)}{2}-1$, respectively.  The picture in the $G$-invariant setting is therefore analogous to the general case, which follows from Koiso's results on the stability of irreducible symmetric spaces (see \S\ref{stab-sec}).  

On the other hand, we use formula \eqref{Lpnr-intro} to compute the matrix of $\lic_\pg$ in the multiplicity-free case in terms of the structural constants of the metric.  Given any $g$-orthogonal decomposition $\pg=\pg_1\oplus\dots\oplus\pg_r$ in $\Ad(K)$-invariant and irreducible subspaces, the numbers
$$
[ijk]:=\sum_{\alpha,\beta,\gamma} g([X_\alpha^i,X_\beta^j], X_\gamma^k)^2,
$$
where $\{ X_\alpha^i\}$ is a $g$-orthonormal basis of $\pg_i$, are invariant under any permutation of $ijk$ by the natural reductivity of $g$ and one has that $\ricci(g)=\rho g$ if and only if 
$$
 \tfrac{b_k}{2} -\tfrac{1}{4d_k}\sum_{i,j} [ijk] = \rho, \qquad\forall k=1,\dots,r,
$$  
where $-\kil_\ggo|_{\pg_k}=b_kg|_{\pg_k}$ and $d_k:=\dim{\pg_k}$.  We obtain in \S\ref{strconst-sec} that the entries of the matrix of $\lic_\pg$ with respect to the orthonormal basis $\left\{ \tfrac{1}{\sqrt{d_1}}I_{\pg_1},\dots, \tfrac{1}{\sqrt{d_r}}I_{\pg_r}\right\}$ of $\sym(\pg)^K$ are given by 
\begin{equation}\label{Lpsc-intro}
[\lic_\pg]_{kk} = 
\tfrac{1}{d_k}\sum_{\substack{j\ne k\\ i}} [ijk], \quad\forall k,
\qquad 
[\lic_\pg]_{jk} =  
-\tfrac{1}{\sqrt{d_j}\sqrt{d_k}}\sum_{i} [ijk], \quad\forall j\ne k.  
\end{equation}

This formula is applied in \S\ref{unif-sec} to prove that the standard metric is $G$-unstable (and consequently Ricci flow dynamically unstable) on each of the following homogeneous spaces,

\begin{enumerate}[{\small $\bullet$}] 
\item $\SU(nk)/\Se(\U(k)\times\dots\times\U(k))$, $\quad k\geq 1$, 

\item $\Spe(nk)/\Spe(k)\times\dots\times\Spe(k)$, $\quad k\geq 1$, 

\item $\SO(nk)/\Se(\Or(k)\times\dots\times\Or(k))$, $\quad k\geq 3$,  
\end{enumerate}
where the quotients are all $n$-times products with $n\geq 3$.  Note that $\dim{\mca^G}=\tfrac{n(n-1)}{2}$.  We also compute the coindex (see Table \ref{table2}) and found that the standard metric is a local minimum of $\scalar:\mca_1^G\rightarrow\RR$ in many cases (including $\SU(3)/T^2$) and it is $G$-degenerate in some others (e.g., $\SU(4)/T^3$).  

As a second application of formula \eqref{Lpsc-intro}, we study in \S\ref{DZ-sec} the $G$-stability of the left-invariant Einstein metrics found by Jensen in \cite{Jns2}.  Given any simple Lie group $H$, one considers the left-invariant metric on $H$ given by  
$$
g_t=-\kil_\hg|_{\ag} + t(-\kil_\hg)|_{\kg}, \qquad t>0,   
$$
where $K\subset H$ is a semisimple subgroup and $\hg = \ag\oplus\kg$ is the $\kil_\hg$-orthogonal decomposition.   $g_1$ is therefore the Killing metric on $H$ and for each $t\ne 1$, the metric $g_t$ is naturally reductive with respect to $G=H\times K$ (see \cite{Zll} or \cite[Theorem 1]{DtrZll}).  If we assume that $\ag$ is $\Ad(K)$-irreducible (i.e., $H/K$ is isotropy irreducible), then the isotropy representation of $G/\Delta K$ is mutliplicity-free and consists of $r+1$ $\Ad(K)$-irreducible summands, where $\kg=\kg_1 \oplus \dots \oplus \kg_r$ is a decomposition in simple ideals of $\kg$.  Note that therefore $\dim{\mca_1^G}=r$.  We also assume that $\kil_{\kg_i}=c\kil_\hg|_{\kg_i}$ for any $i=1,\dots,r$ and some constant $c$.  It is proved in \cite[Corollary 2, p.44]{DtrZll} that $\Ricci(g_t)=\rho I$ ($t\ne 1$) if and only if,
$$
t=t_E:=\tfrac{dc}{(d+2k)(1-c)}, \qquad 2\rho = \tfrac{c}{2t_E}+\tfrac{(1-c)t_E}{2},
$$ 
where $d=\dim{\ag}$ and $k:=\dim{\kg}$.  The explicit computation of $\Spec(\lic_\pg)$ using \eqref{Lpsc-intro} shows that every $g_{t_E}$ is $G$-unstable with coindex $r$, and in particular, $g_{t_E}$ is always a local minimum.  This provides at least one $H$-unstable (and so Ricci flow dynamically unstable) left-invariant Einstein metric on most simple Lie groups, including one of coindex $\geq 3$ on $E_6$ and one of coindex $\geq 2$ on $\SO(2n)$, $\Spe(2n)$, $\SU(n^2)$ and $E_7$.  

Finally, we would like to mention that this is the first of a series of forthcoming papers on $G$-stability of homogeneous Einstein metrics on compact manifolds.  In \cite{stab-dos}, we give a formula for the operator $\lic_\pg(g)$ for any $G$-invariant Einstein metric $g$ in terms of its usual structural constants $[ijk]$ with respect to a bi-invariant metric on $\ggo$.  The formula is used to establish the $G$-stability types of several Einstein metrics on well-known families of homogeneous spaces, including generalized Wallach spaces and some generalized flag manifolds.  On the other hand, we compute in \cite{stab} the $G$-stability types of all the standard Einstein metrics with $G$ simple obtained in the famous classification by Wang and Ziller in \cite{WngZll2}.

\vs \noindent {\it Acknowledgements.}  I am very grateful with Emilio Lauret for computing the first eigenvalue of the Casimir operator in Table \ref{table1}.  I also thank Christoph B\"ohm, Ioannis Chrysikos, McKenzie Wang and Wolfgang Ziller for many helpful conversations.

\section{Stability of compact Einstein manifolds}\label{stab-sec}

Einstein metrics on a compact differentiable manifold $M$, i.e., the Ricci tensor satisfies $\ricci(g)=\rho g$ for some $\rho\in\RR$, were first studied by Hilbert, who proved that they are precisely the critical points of the total scalar curvature functional 
\begin{equation}\label{sct}
\widetilde{\scalar}(g):=\int_M \scalar(g)\; d\vol_g,
\end{equation}
restricted to the space $\mca_1$ of unit volume Riemannian metrics on $M$ (see \cite[4.21]{Bss}).  A fundamental problem is to determine whether a given Einstein metric $g$ is {\it rigid}, in the sense that any Einstein metric sufficiently close to $g$ (compact open $C^\infty$ topology) is isometric to $g$ up to scaling.  Hilbert's variational characterization, beyond being a tool for the existence problem, allows the use of stability theory and calculus of variations in the study of the rigidity of Einstein metrics.  

The case of $(M,g)$ being isometric to a round sphere will be excluded in what follows.  The tangent space $T_g\mca=\sca^2(M)$ (symmetric $2$-tensors) of the space $\mca$ of all Riemannian metrics on $M$ at a metric $g\in\mca$ admits the following decomposition (see \cite[4.57]{Bss}): 
\begin{equation}\label{Tdec}
T_g\mca = \left( \lca_{\mathfrak{X}(M)}g \oplus C^\infty(M)g \right) \oplus^{\perp_g} \tca\tca_g, 
\end{equation}
where $\perp_g$ denotes orthogonality with respect to the usual $L^2$ inner product $\ip_g$ on $\sca^2(M)$ defined by $g$.  The three summands are given by:
\begin{enumerate}[{\small $\bullet$}] 
\item $\lca_{\mathfrak{X}(M)}g=\Ima \delta_g^*=T_g\Diff(M)\cdot g$ is the space of {\it trivial variations}, where $\lca$ denotes Lie derivative.  Here $\delta_g:\sca^2(M)\rightarrow\Omega^1(M)$ is the divergence  operator $\delta_g(T):=-\sum\limits_i \nabla_{X_i}T(X_i,\cdot)$, where $\{ X_i\}$ is any local orthonormal frame, and $ \delta_g^*$ is sometimes called the Killing operator as its kernel consists of Killing vector fields.  An alternative decomposition is given by $T_g\mca=\Ima\delta_g^*\oplus^{\perp_g}\Ker\delta_g$.  

\item $C^\infty(M)g$ is the space of {\it conformal variations}, i.e., the tangent space at $g$ of the space of metrics which are conformally equivalent to $g$.  Note that $\RR g\subset C^\infty(M)g$.  

\item $\tca\tca_g=\Ker\delta_g\cap\Ker\tr_g$ is the subspace of divergence-free (or transversal) and traceless symmetric $2$-tensors, so-called {\it TT-tensors}.     
\end{enumerate}
Let us now assume that $g$ is an Einstein metric on $M$.  If 
$$
\cca:=\{ g\in\mca:\scalar(g)\,\mbox{is a constant function on}\, M\}, 
$$ 
then at any $g\in\cca$, 
\begin{equation}\label{Cdec}
T_g\cca = \left( \lca_{\mathfrak{X}(M)}g \oplus \RR g \right) \oplus^{\perp_g} \tca\tca_g.
\end{equation}
Thus $\tca\tca_g$ can also be described as the space of all unit volume constant scalar curvature non-trivial variations of $g$ (see \cite[4.44-4.46]{Bss}). 

We consider the second variation (or Hessian) of $\widetilde{\scalar}$ at $g$, i.e.,  
$$
\widetilde{\scalar}''_g(T,T) := \left.\dsdt\right|_0 \widetilde{\scalar}(g+tT), \qquad\forall T\in\sca^2(M).     
$$ 
Recall that $g$ is a critical point of $\widetilde{\scalar}|_{\mca_1}$, so for traceless tensors, this can be computed by using, instead of the line $g+tT$, any smooth curve $g(t)\in\mca$ such that $g(0)=g$ and $g'(0)=T$.  The following properties of the second variation are well known (see \cite[4.60]{Bss}): 

\begin{enumerate}[{\small $\bullet$}] 
\item Decomposition \eqref{Tdec} is orthogonal with respect to $\widetilde{\scalar}''_g$, so its restriction on each of the three summands can be studied separately.   

\item $\widetilde{\scalar}''_g$ vanishes on $\lca_{\mathfrak{X}(M)}g$ and $\widetilde{\scalar}''_g(g,g)=2\scalar(g)$.   

\item $\widetilde{\scalar}''_g$ is positive definite on $C^\infty(M)g$.  

\item $\widetilde{\scalar}''_g|_{\tca\tca_g}$ is negative definite on the orthogonal complement of a (possibly trivial) finite-dimensional vector subspace of $\tca\tca_g$ (i.e., nullity and coindex are both finite).    
\end{enumerate}
These facts motivate the definition of the following concepts.  

\begin{definition}\label{stab-def-2}
Let $g\in\mca$ be an Einstein metric.  We call $g$  
\begin{enumerate}[{\small $\bullet$}] 
\item {\it $\scalar$-stable} (or $\scalar$-linearly stable): $\widetilde{\scalar}''_g|_{\tca\tca_g}<0$ (see \cite[Definition 2.7]{Kso} and \cite[Definition 2.2]{CaoHe}).  In particular, $g$ is a local maximum of $\widetilde{\scalar}|_{\cca_1}$ if $g\in\mca_1$, where $\cca_1$ is the space of all unit volume constant scalar metrics on $M$ (indeed, by \eqref{Cdec}, $T_g\cca_1=\lca_{\mathfrak{X}(M)}g \oplus^{\perp_g} \tca\tca_g$ and one uses that $\tca\tca_g$ exponentiates into a slice for the $\Diff(M)$-action; see \cite[12.22]{Bss} or \cite[Lemma 2.6.3]{Krn1}).  This is actually the definition of $g$ $\scalar$-stable in many papers (e.g., \cite{Bhm2,WngWng}).   

\item {\it $\scalar$-unstable} (or $\scalar$-linearly unstable): $\widetilde{\scalar}''_g(T,T)>0$ for some $T\in \tca\tca_g$ (see \cite[Definition 2.7]{Kso} and \cite[Definition 2.2]{CaoHe}).    

\item {\it infinitesimally non-deformable}: $\Ker \E'_g\cap \tca\tca_g=0$, and otherwise {\it infinitesimally deformable} (see \cite[12.29]{Bss}).  Here, $\E'_g$ is the first variation of the operator
\begin{equation}\label{Eop}
\E:\mca\longrightarrow\sca^2(M), \qquad \E(g):=\ricci(g) - \tfrac{\widetilde{\scalar}(g)}{n} g,  
\end{equation}
so-called the {\it Einstein operator} (see \cite[12.26]{Bss}).  Note that $g\in\mca_1$ is Einstein if and only if $\E(g)=0$.  Each element of $\Ker\E'_g\cap \tca\tca_g$ is called an {\it infinitesimally Einstein deformation}, which may or may not be the velocity of a genuine Einstein deformation, i.e., a differentiable curve $g(t)$ of Einstein metrics through $g$.     
\end{enumerate}
\end{definition}

If $\ricci(g)=\rho g$, then for any $T\in\tca\tca_g$, 
$$
\widetilde{\scalar}''_g(T,T) = -\unm\la(\Delta_L-2\rho\id)T,T\ra_g \quad\mbox{and}\quad
\E'_g(T)=\unm\Delta_L(T)-\rho T,
$$ 
where $\Delta_L$ is the {\it Lichnerowicz Laplacian} of $g$, given by,
$$
\Delta_LT=-\nabla^*\nabla T-2\Riem_g(T,\cdot)+\ricci_g\circ T+T\circ\ricci_g,
$$ 
and $\nabla\nabla^*$ denotes the usual rough Laplacian of $g$ (see \cite[4.64]{Bss} and \cite[12.28']{Bss}, respectively).  This implies that if $\lambda_L(g)$ denotes the smallest eigenvalue of $\Delta_L|_{\tca\tca_g}$, then the following characterizations hold (cf.\ \cite[\S 4]{CaoHe} and \cite[\S 1]{WngWng}): 

\begin{enumerate}[{\small $\bullet$}] 
\item $g$ is  $\scalar$-stable if and only if $2\rho<\lambda_L(g)$.  

\item $g$ is $\scalar$-unstable if and only if $\lambda_L(g)< 2\rho$.  

\item $g$ is infinitesimally non-deformable if and only if $2\rho\notin\Spec\left(\Delta_L|_{\tca\tca_g}\right)$, if and only if $\widetilde{\scalar}''_g|_{\tca\tca_g}$ is non-degenerate.  
\end{enumerate}
In particular, stability implies infinitesimal non-deformability (cf.\ \cite[Remark (2) below Definition 2.7]{Kso}).  On the other hand, the fact that any infinitesimally non-deformable Einstein metric is rigid is a strong result by Koiso (see \cite[Proposition 3.3]{Kso} and \cite[12.66]{Bss}).  

After forty years, the stability picture for symmetric spaces has recently been completed.  

\begin{theorem}\cite{Kso, GsqGld, SmmWng, Sch}  
All compact irreducible symmetric spaces are $\scalar$-stable, except for
$$
\begin{array}{c}
\Spe(n)\; (n \geq 2), \quad \Spe(n)/\U(n)\;  (n \geq 3), \quad \SO(5)/(\SO(3) \times \SO(2)), \\ 
\Spe(p+q)/(\Spe(p)\times\Spe(q))\; (p,q\geq 2), 
\end{array}
$$
which are $\scalar$-unstable and infinitesimally non-deformable, and 
$$
\begin{array}{c}
\SU(n)/ \SO(n), \quad \SU(2n)/ \Spe(n)\; (n \geq 3), \\ 
\SU(p + q)/\Se(\U(p) \times \U(q))\; (p \geq q \geq 2), \quad \Spe(3)/(\Spe(2)\times \Spe(1)), \\ 
F_4/ \Spin(9), \quad \SU(n)\; (n \geq 3), \quad E_6/F_4,
\end{array}
$$
which are infinitesimally deformable and not $\scalar$-unstable (i.e., $\lambda_L(g)=2\rho$), often called $\scalar$-neutrally stable.  
\end{theorem}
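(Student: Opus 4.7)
The plan is to invoke the characterization stated just before the theorem: $g$ is $\scalar$-stable, $\scalar$-unstable, or $\scalar$-neutrally stable according to whether $2\rho<\lambda_L(g)$, $\lambda_L(g)<2\rho$, or $\lambda_L(g)=2\rho$, where $\lambda_L(g)$ is the smallest eigenvalue of $\Delta_L|_{\tca\tca_g}$. Hence the classification reduces to computing, for each compact irreducible symmetric space $M=G/K$, the smallest eigenvalue of $\Delta_L|_{\tca\tca_g}$ and comparing it with $2\rho$.

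First I would use the classical fact that on a symmetric space the Lichnerowicz Laplacian of the standard metric acts on sections of $S^2T^*M$ as the Casimir operator $\cas_G$ of $\ggo$ with respect to $-\kil_\ggo$; this follows from $\nabla\Riem=0$ together with the identity $\nabla^*\nabla=\cas_G-\cas_K$ at the level of associated bundles. Peter--Weyl and Frobenius reciprocity then yield
$$L^2(S^2T^*M)\;\cong\;\bigoplus_{\gamma\in\widehat{G}} V_\gamma\otimes \operatorname{Hom}_K(V_\gamma,S^2\pg^*),$$
and $\Delta_L$ acts by the Casimir scalar $c(\gamma)$ on the $V_\gamma$-isotypic component, so one is left with intersecting this decomposition with $\Ker\delta_g\cap\Ker\tr_g$ to isolate the TT-subspace.

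Next, for each entry on Cartan's list, the steps would be: (i) determine those $\gamma\in\widehat{G}$ such that $\operatorname{Hom}_K(V_\gamma,S^2\pg^*)\ne 0$ via branching laws; (ii) peel off the pieces coming from the trivial $K$-representation (the conformal direction $\RR g$) and from $\Ima\delta_g^*$ in order to identify the actual TT-contributions; and (iii) compute $c(\gamma)$ from the highest weight of $\gamma$ via the standard formula $c(\gamma)=\la\lambda+2\delta,\lambda\ra$ and compare it with $2\rho$. In the group case $G$ equipped with a bi-invariant metric one further uses $L^2(G)\cong\bigoplus V_\gamma\otimes V_\gamma^*$ to turn everything into tensor-product decompositions for the two-sided action.

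The main obstacle is the sheer extent and subtlety of the case-by-case analysis. The obvious small eigenvalues come from $\gamma$ equal to the adjoint representation or from obvious extensions of the isotropy summands, but proving that no smaller TT-eigenvalue exists requires explicit and often delicate branching from $G$ to $K$ for each pair. The borderline cases, in which $\lambda_L(g)=2\rho$ is attained and one must decide between $\scalar$-neutral stability (with an accompanying infinitesimal Einstein deformation) and genuine instability, are the hardest; they were historically settled in stages, culminating in Schwahn's treatment of $F_4/\Spin(9)$ and $E_6/F_4$. A fully self-contained proof would therefore have to reassemble the representation-theoretic computations of Koiso, Gasqui--Goldschmidt, Cao--He, Semmelmann--Weingart and Schwahn entry by entry on Cartan's list.
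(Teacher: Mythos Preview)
Your outline is a faithful sketch of how the cited references actually establish the classification, and there is nothing mathematically wrong with it. However, note that the paper does not give its own proof of this theorem: it is stated purely as a compilation of known results, with the citations \cite{Kso, GsqGld, CaoHe, SmmWng, Sch} carrying the entire burden. So there is no ``paper's proof'' to compare against; the statement functions as background. Your proposal correctly identifies the mechanism used in those sources --- reducing $\Delta_L$ on a symmetric space to the Casimir operator via $\nabla\Riem=0$, decomposing $L^2$-sections by Peter--Weyl and Frobenius reciprocity, and then running the case-by-case branching analysis on Cartan's list --- and you are right that the delicate borderline cases were only settled in the most recent of the cited works.
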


The following questions remain open: 

\begin{enumerate}[{\small $\bullet$}] 
\item Are the infinitesimally deformable irreducible symmetric metrics local maxima of $\widetilde{\scalar}|_{\cca_1}$?  The only results we know on this question are that $\SU(3)$ and $\SU(2n)/ \Spe(n)$ are not local maxima (see \cite{Jns} and \cite[Example 6.7]{BhmWngZll}, respectively).  We refer to \cite{locmax} for a more detailed treatment of this question.  

\item Does there exist a $\scalar$-stable Einstein manifold with $\scalar>0$ which is not symmetric?  

\item Are the irreducible symmetric spaces 
$$
\SU(n)/\SO(n), \quad \SU(2n)/\Spe(n), \quad \SU(p+q)/\Se(\U(p)\times\U(q)), \quad \SU(n), \quad  E_6/F_4,
$$  
rigid?  Recently, the space $\SU(2n+1)$ has been shown to be rigid in \cite{BttHllMrpWld}.  
\end{enumerate}

Another important kind of stability is $\nu$-entropy stability, relative to the $\nu$-entropy functional $\nu:\mca\longrightarrow\RR$ introduced by Perelman (see \cite{CaoHe} for the definition).  It was proved in \cite{Prl} that $\nu$ is strictly increasing along any Ricci flow solution unless the solution consists of a shrinking gradient Ricci soliton (e.g., an Einstein metric with positive scalar curvature).   

Decomposition \eqref{Tdec} is also $\nu''_g$-orthogonal and $\nu''_g$ also vanishes on $\lca_{\mathfrak{X}(M)}g$ (see \cite{CaoHmlIlm, CaoHe}).

\begin{definition} \cite[Definition 3.3]{CaoHe} An Einstein metric $g\in\mca$ is said to be, 
\begin{enumerate}[{\small $\bullet$}] 
\item {\it $\nu$-stable}: $\nu''_g\leq 0$ (called $\nu$-linearly stable in \cite[Definition 1.2]{WngWng}).  Equivalently, $\nu''_g|_{C^\infty(M)g}\leq 0$ and $\nu''_g|_{\tca\tca_g}\leq 0$.  

\item {\it strictly $\nu$-stable}: $\nu''_g|_{C^\infty(M)g}< 0$ and $\nu''_g|_{\tca\tca_g}< 0$.  

\item {\it neutrally $\nu$-stable}: $g$ is $\nu$-stable and there is a non-zero symmetric $2$-tensor $T$ either in $C^\infty(M)g$ or in $\tca\tca_g$ such that $\nu''_g(T,T)=0$.  

\item {\it $\nu$-unstable}: $\nu''_g(T,T)>0$ for some $T$ either in $C^\infty(M)g$ or $\tca\tca_g$.   
\end{enumerate}
\end{definition}

\begin{remark}
In particular, if $g\in\mca_1$ is strictly $\nu$-stable, then $g$ is a local maximum of $\nu$ among conformal variations of $g$, as well as a local maximum of $\nu|_{\cca_1}$ by \eqref{Cdec} (this is called $\nu$-stable in \cite[Definition 1.2]{WngWng}).  
\end{remark}

Let $\lambda(g)$ denote the first eigenvalue of the Laplacian on functions $\Delta$ of the metric $g$ (i.e., the Laplace-Beltrami operator).   

\begin{theorem}\cite{CaoHmlIlm}
Let $(M,g)$ be a compact Einstein manifold other than the standard sphere, with $\ricci(g)=\rho g$, $\rho>0$.  Then, 

\begin{enumerate}[(i)]
\item $\nu''_g(T,T)>0$ for some $T\in C^\infty(M)g$ if and only if $\lambda(g)<2\rho$ (see also \cite[Lemma 3.5]{CaoHe}).  

\item $\nu''_g(T,T)>0$ for some $T\in \tca\tca_g$ if and only if $\lambda_L(g)<2\rho$ (i.e., $g$ is $\scalar$-unstable). 
\end{enumerate}
\end{theorem}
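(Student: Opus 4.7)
The plan is to exploit Perelman's characterization
$$
\nu(g)=\inf\left\{\wca(g,f,\tau):\tau>0,\ \int_M(4\pi\tau)^{-n/2}e^{-f}\,d\vol_g=1\right\},
$$
where $\wca(g,f,\tau)=\int_M\left[\tau(\scalar(g)+|\nabla f|^2)+f-n\right](4\pi\tau)^{-n/2}e^{-f}\,d\vol_g$. At the Einstein metric $g$ with $\ricci(g)=\rho g$, $\rho>0$, the infimum is attained at the constant $f_0$ and $\tau_0=\tfrac{1}{2\rho}$, these being the data of the associated shrinking soliton structure. For any variation $g(s)$ of $g$, the minimizer $(f(s),\tau(s))$ depends smoothly on $s$ by the implicit function theorem, so $\nu(g(s))=\wca(g(s),f(s),\tau(s))$ is smooth and $\nu''_g$ can be computed via the chain rule, exploiting that $(f_0,\tau_0)$ is a constrained critical point of $\wca(g,\cdot,\cdot)$ to discard cross-terms. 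By the orthogonality of decomposition \eqref{Tdec} with respect to $\nu''_g$ and the vanishing on $\lca_{\mathfrak{X}(M)}g$ recalled above, it suffices to analyze the two relevant summands separately.

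For part (i), write $T=vg$ with $v\in C^\infty(M)$, and note that subtracting the mean of $v$ is absorbed into the $\RR g$ factor (which is controlled by the unit-volume constraint), so one may assume $\int_M v\,d\vol_g=0$. A second-order expansion of $\wca$ around $(g,f_0,\tau_0)$, with $f$ and $\tau$ adjusted via the minimizer equations, simplifies (up to a positive multiplicative constant) to
$$
\nu''_g(vg,vg)=\frac{\tau_0}{\vol(M,g)}\int_M v\,(2\rho v-\Delta v)\,d\vol_g.
$$
Thus $\nu''_g(vg,vg)>0$ for some $v$ if and only if $\Delta$ admits an eigenvalue strictly below $2\rho$ on zero-mean functions, i.e.\ if and only if $\lambda(g)<2\rho$.

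For part (ii), the crucial observation is that for $T\in\tca\tca_g$ the minimizer $(f(s),\tau(s))$ is first-order stationary at $s=0$: linearizing the Euler-Lagrange equations
$$
\tau\left(2\Delta f-|\nabla f|^2+\scalar(g)\right)+f-n=\nu(g),\qquad \int_M(4\pi\tau)^{-n/2}e^{-f}\,d\vol_g=1,
$$
and using $\tr_g T=0$ and $\delta_g T=0$ forces $\dot f(0)=0$ and $\dot\tau(0)=0$. Consequently $\nu''_g(T,T)$ coincides with the pure $g$-second variation of $\wca$ at $(f_0,\tau_0)$, which, combined with the standard formula $\widetilde{\scalar}''_g(T,T)=-\la(\Delta_L-2\rho\id)T,T\ra_g$ on TT-tensors and the Einstein equation, yields
$$
\nu''_g(T,T)=-\frac{\tau_0}{2\vol(M,g)}\int_M\la(\Delta_L-2\rho\id)T,T\ra\,d\vol_g.
$$
Hence $\nu''_g(T,T)>0$ for some $T\in\tca\tca_g$ iff $\Delta_L|_{\tca\tca_g}$ has an eigenvalue below $2\rho$, i.e.\ $\lambda_L(g)<2\rho$, which by the characterization recalled above is precisely the $\scalar$-instability of $g$.

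The main technical obstacle is the analytic bookkeeping: verifying smoothness of $s\mapsto(f(s),\tau(s))$, checking that its first derivative is killed by the TT conditions $\tr_g T=0$, $\delta_g T=0$, and matching terms in the expansion of $\wca$. Once these analytic points are settled, both second-variation formulas follow from routine computations together with the Einstein equation.
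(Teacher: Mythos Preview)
The paper does not prove this theorem at all: it is stated with the citation \cite{CaoHmlIlm} and used as background in \S\ref{stab-sec}, but no argument is given. There is therefore no ``paper's own proof'' to compare your proposal against.

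That said, your sketch is the standard route taken in \cite{CaoHmlIlm} and \cite{CaoHe}: compute $\nu''_g$ via the minimizer $(f(s),\tau(s))$ of $\wca$, use that at the Einstein metric the minimizer is the constant pair $(f_0,\tau_0)$ with $\tau_0=\tfrac{1}{2\rho}$, and then treat the conformal and TT blocks separately. For TT-tensors your key observation that $\dot f(0)=0$, $\dot\tau(0)=0$ is exactly what reduces $\nu''_g$ to the Hilbert second variation, hence to $\Delta_L-2\rho\id$. For conformal variations the phrase ``with $f$ and $\tau$ adjusted via the minimizer equations'' hides the nontrivial step: unlike the TT case, here $\dot f(0)$ is generally nonzero and must be solved for from the linearized Euler--Lagrange equation (this is where the exclusion of the standard sphere enters, via invertibility of $\Delta-2\rho$ on mean-zero functions). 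Only after substituting this $\dot f(0)$ back does one arrive at a quadratic form in $v$ governed by $\Delta-2\rho$; your displayed formula is the correct endpoint but the derivation you indicate is too compressed to count as a proof. If you intend this as a genuine argument rather than a summary of \cite{CaoHmlIlm,CaoHe}, that linearization step for $f$ in the conformal direction needs to be written out.
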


In particular, 

\begin{enumerate}[{\small $\bullet$}] 
\item $g$ is $\nu$-stable if and only if $2\rho\leq\lambda(g)$ and $2\rho\leq\lambda_L(g)$;  

\item it is neutrally $\nu$-stable if and only if in addition $\lambda(g)= 2\rho$ or $\lambda_L(g)= 2\rho$; 

\item and $g$ is $\nu$-unstable if and only if either $\lambda(g)< 2\rho$ or $\lambda_L(g)< 2\rho$.  
\end{enumerate}

The following notion of stability is more intuitive.  

\begin{definition} \cite[Definition 1.1]{Krn} 
A compact Ricci soliton $(M, g)$ is called {\it dynamically stable} if for any metric $g_0$ near $g$, the normalized Ricci flow starting at $g_0$ exists for all $t\geq 0$ and converges modulo diffeomorphisms to an Einstein metric near $g$, as $t\to\infty$.  On the other hand, (M, g) is said to be {\it dynamically unstable} if there exists a nontrivial normalized Ricci flow defined on $(-\infty,0]$ which converges modulo diffeomorphisms to $g$ as $t\to-\infty$.  
\end{definition}

Kr\"oncke proved that if a compact shrinking Ricci soliton $(M, g)$ is not a local maximizer of $\nu$ (in particular, if $g$ is $\nu$-unstable), then $(M,g)$ is dynamically unstable (see \cite[Corollary 6.2.5]{Krn1} or \cite[Theorem 1.3]{Krn}).  The following implications for a positive scalar curvature Einstein metric follow:
$$
\mbox{$\scalar$-instability} \Rightarrow  \mbox{$\nu$-instability} \Rightarrow  \mbox{dynamical instability}.   
$$

\section{Rigidity and stability of homogeneous Einstein manifolds}\label{stabhom-sec}

In this section, we consider a connected differentiable manifold $M$ (not necessarily compact) and assume that $M$ is homogeneous.  We also fix the transitive action of a Lie group $G$ on $M$, which is assumed to be {\it almost-effective} (i.e., only a discrete subgroup of $G$ acts trivially).  This provides a presentation $M=G/K$ of $M$ as a homogeneous space, where $K\subset G$ is the isotropy subgroup at some origin point $o\in M$.  Neither $G$ nor $K$ are assumed to be connected.  

We denote by $\sca^2(M)^G$ the finite-dimensional vector space of all $G$-invariant symmetric $2$-tensors on $M$, and by $\mca^G\subset\sca^2(M)^G$, the open cone of $G$-invariant Riemannian metrics.  Note that $\mca^G$ is a differentiable manifold with $1\leq\dim\mca^G\leq\frac{n(n+1)}{2}$ and tangent space $T_g\mca^G=\sca^2(M)^G$ at any $g\in\mca^G$, where $n:=\dim{M}$.

\subsection{$G$-rigidity}\label{G-rig-sec}
The Lie group $\Aut(G/K)\subset\Diff(M)$ of all Lie automorphisms of $G$ taking $K$ onto $K$ acts by pullback on $\mca^G$, so each of its orbits consist of pairwise isometric metrics and the orbit $\Aut(G/K)\cdot g$ can be viewed as the trivial $G$-invariant deformations of a metric $g\in\mca^G$.  In this way, $\Aut(G/K)$ acts as the natural `gauge group' in the $G$-invariant setting.  

\begin{remark}
Two $G$-invariant metrics belonging to different $\Aut(G/K)$-orbits may however be isometric via some $\psi\in\Diff(M)$ which is not an automorphism.  This cannot occur for left-invariant metrics on completely solvable Lie groups (see \cite{Alk}).   For $G$ compact, one anyhow has that $T_g\Aut(G/K)\cdot g=T_g(\mca^G\cap\Diff(M)\cdot g)$ for any $g\in\mca^G$ (see Corollary \ref{tan4} below).       
\end{remark}

Rigidity of Einstein metrics among $\mca^G$ can therefore be naturally defined as follows.  

\begin{definition}
An $G$-invariant Einstein metric $g$ is called {\it $G$-rigid} if there exists an open neighborhood $U$ of $g$ in $\mca^G$ such that any Einstein $g'\in U$ belongs to $\Aut(G/K)\cdot g$ up to scaling.   
\end{definition} 

In other words, a $G$-invariant Einstein metric $g$ is $G$-rigid when $g$ is an isolated point in the moduli space $\overline{\eca}^G:=\eca^G/\RR_+\Aut(G/K)$, where 
$$
\eca^G:=\{ g\in\mca^G:g\;\mbox{is Einstein}\},  
$$  
and $\RR_+:=\{ a\in\RR:a>0\}$ acts on $\mca^G$ by scaling.  We note that $\overline{\eca}^G=\eca_1^G/\Aut(G/K)$, where $\eca_1^G:=\eca^G\cap\mca_1^G$ and   
$$
\mca^G_1:=\{ g'\in\mca^G:\dete_{\overline{g}}{g'}=1\}.  
$$ 
Here $\overline{g}$ denotes a fixed background metric in $\mca^G$.  For $G$ compact, $\mca^G_1$ is the space of all $G$-invariant metrics of a given fixed volume.  

The space $\eca^G$ is a real semialgebraic subset (i.e., the set of solutions of finitely many polynomial equalities and inequalities) of $\sca^2(M)^G$ (see \cite[Proposition 1.5]{BhmWngZll}).  The following properties therefore follow from classical theorems of Whitney (see e.g.\ \cite{BchCstRoy}): 

\begin{enumerate}[{\small $\bullet$}] 
\item $\eca^G$ has finitely many connected components.  

\item There is a (local) stratification of $\eca^G$ into real algebraic smooth submanifolds.  

\item Path components and connected components coincide, as $\eca^G$ is locally path-connected.
\end{enumerate}

In the compact case, we have in addition the following major result.  

\begin{theorem}\cite[Theorem 1.6]{BhmWngZll}
Let $G$ be a compact Lie group and $M=G/K$ be a connected homogeneous space with finite fundamental group. Then each connected component of $\eca_1^G$ is compact, and the set of possible Einstein constants of metrics among $\eca_1^G$ is finite.
\end{theorem}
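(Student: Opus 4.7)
I would prove the two conclusions separately, drawing on the semialgebraic structure of $\eca^G$ already quoted in the excerpt for the finiteness of Einstein constants and on a properness argument for $\scalar$ for the compactness of the components.

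\textbf{Finiteness of Einstein constants.} Observe first that $\mca_1^G = \{g \in \mca^G : \dete_{\overline{g}} g = 1\}$ is cut out of $\sca^2(M)^G$ by a single polynomial equation, so $\eca_1^G = \eca^G \cap \mca_1^G$ inherits the real semialgebraic structure of $\eca^G$. By the Whitney theorems cited above, $\eca_1^G$ has finitely many connected components and admits a stratification into smooth semialgebraic submanifolds of $\mca_1^G$. On each stratum any smooth path $g_t$ has velocity $\dot g_t\in T_{g_t}\mca_1^G$, and since $g_t$ is a critical point of $\scalar|_{\mca_1^G}$ one gets $\tfrac{d}{dt}\scalar(g_t)=d\scalar_{g_t}(\dot g_t)=0$; hence $\scalar$ is locally constant on each stratum. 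Any two points of a connected component $E$ can be joined by a semialgebraic arc decomposing into finitely many smooth sub-arcs, each inside a single stratum, and continuity of $\scalar$ at the finitely many breakpoints propagates constancy to all of $E$. Since there are finitely many components, only finitely many values $\rho=\scalar/n$ can occur on $\eca_1^G$.

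\textbf{Compactness of each component.} The finiteness of $\pi_1(M)$ lets me reduce, after passing to a finite cover and dividing out the ineffective part of the $G$-action, to the case where $G$ is compact and semisimple, so that every Einstein constant appearing in $\eca_1^G$ is strictly positive. Each component $E$ is closed in $\mca_1^G$, as $\eca^G$ is semialgebraic and hence closed in $\sca^2(M)^G$, so compactness of $E$ is equivalent to showing that $E$ sits inside a compact subset of $\mca_1^G$. The crucial input is a properness statement for the scalar curvature restricted to its positive-valued region: for every $c>0$ the set $\{g\in\mca_1^G:\scalar(g)\geq c\}$ has compact closure in $\mca_1^G$. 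Granted this, the constant value $\rho_E>0$ from the first step gives $E\subset\{\scalar\geq n\rho_E\}$, which forces $E$ to be compact.

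The hard part, and essentially the entire content of \cite[Theorem 1.6]{BhmWngZll}, is the properness statement above. My plan for it would be to parametrise $\mca_1^G$ by the $\Ad(K)$-invariant inner products on $\pg$ of fixed total volume, diagonalise a diverging sequence $g_i$ against a fixed background metric, and invoke the graph/simplicial complex decomposition of \cite{BhmWngZll, Bhm} to catalogue the possible eigenvalue degenerations as some eigenvalues tend to $0$ or $\infty$ along $\Ad(K)$-invariant submodules of $\pg$. The combinatorial analysis then shows that each such degeneration drives $\scalar(g_i)$ down and in particular rules out any uniform positive lower bound, so no diverging sequence can keep $\scalar(g_i)\geq c>0$. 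Executing this collapse argument is by far the most delicate step; once it is in place, the rest is a clean combination of semialgebraic geometry with the variational characterisation of Einstein metrics as critical points of $\scalar|_{\mca_1^G}$.
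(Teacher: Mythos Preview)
First, the paper itself does not prove this theorem: it is quoted from \cite[Theorem 1.6]{BhmWngZll} and used as a black box, so there is no in-paper argument to compare your proposal against.

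On the merits: your finiteness-of-constants argument (semialgebraic structure plus criticality along piecewise-smooth arcs) is correct and clean. But the compactness part rests on a false claim. Your ``crucial input'' is that $\{g\in\mca_1^G:\scalar(g)\geq c\}$ is relatively compact for every $c>0$, i.e.\ that divergent sequences in $\mca_1^G$ must have $\limsup\scalar\leq 0$. This fails already for $M=S^2\times S^2$ with $G=\SO(3)\times\SO(3)$, $K=\SO(2)\times\SO(2)$ (trivial $\pi_1$, $G$ compact semisimple): here $\mca_1^G=\{g_a:=a\,g_1+a^{-1}g_2:a>0\}$ and $\scalar(g_a)=2a^{-1}+2a\to\infty$ at both ends, so every superlevel set of $\scalar$ is noncompact, yet $\eca_1^G=\{g_1\}$ is a single point. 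Degeneration in product-type directions drives $\scalar$ up, not down. The actual argument in \cite{BhmWngZll} uses the Einstein equation itself, not just the value of $\scalar$: the system of equations $\rho_k=\rho$ for the individual Ricci eigenvalues yields a priori two-sided bounds on the eigenvalues of an \emph{Einstein} $g$ relative to a fixed background, and it is this estimate (not properness of $\scalar$ on all of $\mca_1^G$) that forces $\eca_1^G$ to be compact. The graph/simplicial-complex machinery you invoke is aimed at the existence problem rather than at this compactness step.
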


In particular, in the compact case, the moduli space $\overline{\eca}^G=\eca_1^G/\Aut(G/K)$ is also compact and hence $\overline{\eca}^G$ is finite if and only if every $g\in\mca^G$ is $G$-rigid.  It is an open question whether $\overline{\eca}^G$ is always finite.  This has been conjectured for the multiplicity-free isotropy representation case in \cite{BhmWngZll}, where only finitely many trivial deformations are possible, so conjecturally, $\eca_1^G$ is itself a finite set.  Classes of compact homogeneous spaces for which $\overline{\eca}^G$ is known to be finite include D'Atri-Ziller metrics (see \cite{DtrZll}), generalized Wallach spaces (see \cite{LmsNknFrs}) and spaces with only two isotropy summands (see \cite{WngZll}), but it is still open in general for generalized flag manifolds, even for the full flag $\SU(n)/T$ for $n$ large.    

On the other hand, a left-invariant Einstein metric on a solvable Lie group $G$ is known to be $G$-rigid; moreover, $\overline{\eca}^G$ is either empty or a singleton (see \cite{Hbr} and \cite[Corollary 4.3]{BhmLfn}).  

\begin{proposition}\label{G-rig}
If an Einstein metric $g\in\mca^G$ is not $G$-rigid, then there exists a smooth path $g:(-\epsilon,\epsilon)\rightarrow\mca^G$ such that $g(0)=g$, $g(s)$ is Einstein for all $s$ and 
$$
g'(0)\perp_g T_g \RR_+\Aut(G/K)\cdot g. 
$$  
\end{proposition}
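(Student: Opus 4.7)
The plan is to combine a slice argument for the action of $F:=\RR_+\Aut(G/K)$ on $\mca^G$ with the curve-selection lemma applied to the semialgebraic set $\eca^G$.

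First, I would observe that the orbit $F\cdot g$ is a smooth semialgebraic submanifold of $\mca^G$ near $g$, and introduce the affine slice
$$
S:=\bigl(g+(T_gF\cdot g)^{\perp_g}\bigr)\cap U,
$$
where $U$ is a small open neighborhood of $g$ chosen so that every $F$-orbit meeting $U$ intersects $S$ (this is possible because $S$ is transverse to the orbit at $g$). Assuming $g$ is not $G$-rigid, in every such $U$ there exists an Einstein metric $g'\notin F\cdot g$; writing $g'=f\cdot s$ with $f\in F$ and $s\in S$, the $F$-invariance of $\eca^G$ gives $s=f^{-1}\cdot g'\in\eca^G\cap S$ with $s\ne g$. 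Therefore $g$ is not isolated in $\eca^G\cap S$.

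Since $\eca^G$ is semialgebraic by \cite[Proposition 1.5]{BhmWngZll} (as recalled earlier) and $S$ is semialgebraic, $\eca^G\cap S$ is a semialgebraic subset of the affine space $S$ containing $g$ as a non-isolated point. Milnor's curve-selection lemma then provides a nonconstant real-analytic map $\gamma:[0,\epsilon)\to\eca^G\cap S$ with $\gamma(0)=g$ and $\gamma(s)\ne g$ for $0<s<\epsilon$. Because $S$ is an affine subspace, $\gamma$ extends uniquely to a real-analytic path $\tilde{g}:(-\epsilon,\epsilon)\to S$, and after shrinking $\epsilon$ if necessary $\tilde{g}(s)$ stays inside the positive-definite cone $\mca^G$. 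The polynomial equations cutting out $\eca^G$ vanish along $\gamma$ on $[0,\epsilon)$ and thus, by real analyticity, vanish on all of $(-\epsilon,\epsilon)$, so $\tilde{g}(s)\in\eca^G$ for every $s$. Finally, since $\tilde{g}$ takes values in $S$, its derivative satisfies $\tilde{g}'(0)\in T_gS=(T_gF\cdot g)^{\perp_g}$, which is the asserted orthogonality.

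The main obstacle is the two-sided nature of the required path: the curve-selection lemma only yields a one-sided real-analytic arc, so one must argue that its analytic continuation to $s<0$ remains in $\eca^G$. This is where the semialgebraic structure of $\eca^G$ together with the real-analyticity of the extension inside the affine slice $S$ becomes essential, since the defining polynomial equations, viewed as real-analytic functions of $s$ vanishing on a set with an accumulation point at $s=0$, must vanish identically on $(-\epsilon,\epsilon)$.
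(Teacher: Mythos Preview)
Your argument is correct and takes a genuinely different route from the paper's. The paper invokes the Whitney stratification of the semialgebraic set $\eca^G$ directly: $g$ lies in finitely many connected smooth submanifolds contained in $\eca^G$, each invariant under $\Aut(G/K)^0$; non-$G$-rigidity forces at least one of these to have dimension strictly larger than that of the orbit $\RR_+\Aut(G/K)^0\cdot g$, and a smooth path in that submanifold with velocity orthogonal to the orbit tangent space does the job. Your approach replaces the stratification by a transverse affine slice $S$, reduces to showing that $g$ is non-isolated in the semialgebraic set $\eca^G\cap S$, and then applies the curve-selection lemma.

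What each buys: the paper's proof is shorter but leans on the (slightly informal) statement that $g$ belongs to finitely many smooth pieces of $\eca^G$, and it leaves the two-sidedness of the path implicit. Your approach is more explicit and handles the two-sided extension cleanly via the identity theorem, using that the Einstein condition inside $\mca^G$ is given by real-analytic (indeed polynomial after clearing denominators) equations, while positivity of the metric is an open condition. Two small remarks: the defining equations for $\eca^G$ are rational rather than literally polynomial, but this is harmless since they are real-analytic on $\mca^G$; and the curve-selection lemma does not by itself guarantee $g'(0)\neq 0$, though your non-constant curve lies entirely in $S$, so every derivative at $0$ is orthogonal to $T_gF\cdot g$, which suffices for the proposition as stated and for the transversality claimed in the subsequent Remark.
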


\begin{remark}
It follows from the existence of a slice for the $\RR_+\Aut(G/K)$-action that the path $g(s)$ is transversal to $\RR_+\Aut(G/K)$-orbits for sufficiently small $\epsilon$, in the sense that $g(s)\notin\RR_+\Aut(G/K)\cdot g(s')$ for all $s,s'\in(-\epsilon,\epsilon)$, $s\ne s'$.  In other words, $g(s)$ descends to a genuine curve through the class of $g$ in the moduli space $\overline{\eca}^G$.   
\end{remark}

\begin{proof}
As an element of $\eca^G$, the metric $g$ belongs to a finite number of connected smooth submanifolds contained in $\eca^G$, each of which is invariant under the connected component $\Aut(G/K)^0$ of the Lie group $\Aut(G/K)$.  Since $g$ is not $G$-rigid, the dimension of the orbit $\RR_+\Aut(G/K)^0\cdot g$ is strictly less than the dimension of at least one of these submanifolds, so the existence of the smooth path $g(s)$ follows.  
\end{proof}

\subsection{Variational principle}\label{G-var-sec}
The manifold $\mca^G$ is itself naturally endowed with a Riemannian metric defined at each $g\in\mca^G$ by
\begin{equation}\label{metg}
\la T,T\ra_g := \tr{A^2}, \quad \mbox{where} \quad T_o=g_o(A\cdot,\cdot), \quad\forall T\in\sca^2(M)^G.
\end{equation}
Note that the linear map $A:T_oM\rightarrow T_oM$ is $g_o$-self-adjoint and $\tr_g{T}=\tr{A}$, $\det_g{T}=\det{A}$.  Equivalently, $\la T,T\ra_g:=\sum T_o(X_i,X_i)^2$, for any $g_o$-orthonormal basis $\{ X_i\}$ of $T_oM$.  In particular, $\ip_g$ is precisely the $L^2$ metric considered in \S \ref{stab-sec} if $M$ is compact and $g\in\mca_1^G$.   

In the case when $G$ is unimodular, it is well known (see e.g.\ \cite{Nkn, Hbr} and \cite[(1.11)]{Wng}) that relative to such metric on $\mca^G$, the gradient of the scalar curvature function 
$$
\scalar:\mca^G\rightarrow\RR, \qquad \scalar(g):=\tr_g{\ricci(g)}, 
$$ 
is given by 
\begin{equation}\label{grad-sc}
\grad(\scalar)_g=-\ricci(g), \qquad\forall g\in\mca^G, 
\end{equation}
where $\ricci(g)\in\sca^2(M)^G$ is the Ricci tensor of $g$.  Since the tangent space of the submanifold 
$\mca^G_1$ at a metric $g\in\mca^G_1$ is precisely 
$$
(\RR g)^{\perp_g} = \left\{ T\in\sca^2(M)^G:\tr_g{T}=0\right\} = \Ker\tr_g\cap\sca^2(M)^G,
$$
one obtains the following result, which it was first proved by Palais (see \cite[4.23]{Bss}) for $G$ compact.  

\begin{lemma}\label{Palais}
If $M=G/K$ and $G$ is unimodular, then $g\in\mca^G_1$ is a critical point of $\scalar|_{\mca^G_1}$ if and only if $g$ is Einstein.  
\end{lemma}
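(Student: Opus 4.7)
The plan is to combine the gradient formula $\grad(\scalar)_g=-\ricci(g)$ from \eqref{grad-sc}, which is assumed to be available under the unimodularity hypothesis, with the explicit description of the tangent space $T_g\mca^G_1=(\RR g)^{\perp_g}$ inside $\sca^2(M)^G$ noted just above the lemma statement. Once these two facts are in place the argument is purely a matter of translating orthogonality into a linear condition on $\ricci(g)$.

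First I would observe that $g\in\mca^G_1$ is a critical point of $\scalar|_{\mca^G_1}$ if and only if $\grad(\scalar)_g$ is $\ip_g$-orthogonal to $T_g\mca^G_1$. Since $T_g\mca^G_1=(\RR g)^{\perp_g}$ in the finite-dimensional inner product space $(\sca^2(M)^G,\ip_g)$, the orthogonal complement of $T_g\mca^G_1$ is precisely the line $\RR g$. Substituting $\grad(\scalar)_g=-\ricci(g)$, the critical point condition becomes $\ricci(g)\in\RR g$, i.e.\ $\ricci(g)=\rho g$ for some $\rho\in\RR$, which is exactly the Einstein condition.

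For the converse direction, assuming $\ricci(g)=\rho g$ I would check directly that $-\ricci(g)$ annihilates every $T\in T_g\mca^G_1$. From the definition \eqref{metg} one has $\la T,g\ra_g=\tr_g T$ (the operator associated to $g$ is the identity), and since $T_g\mca^G_1=\Ker\tr_g\cap\sca^2(M)^G$ this immediately gives $\la -\ricci(g),T\ra_g=-\rho\tr_g T=0$, so $\grad(\scalar)_g\perp_g T_g\mca^G_1$ and $g$ is critical.

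The only nontrivial input is the gradient formula \eqref{grad-sc}, and this is precisely the step in which unimodularity of $G$ is used; for a non-unimodular group, a correction involving the mean curvature vector of the orbits appears in $\grad(\scalar)_g$, and the clean equivalence between being a critical point and being Einstein breaks down. Since \eqref{grad-sc} is taken as given, no further obstacle arises and the lemma reduces to the two short orthogonality computations above.
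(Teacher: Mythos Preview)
Your proposal is correct and follows exactly the approach the paper indicates: the paper does not write out a formal proof, but states that the lemma follows from the gradient formula \eqref{grad-sc} together with the identification $T_g\mca^G_1=(\RR g)^{\perp_g}=\Ker\tr_g\cap\sca^2(M)^G$ given just before the lemma. Your write-up simply fills in the routine orthogonality argument that the paper leaves implicit, and your remark on where unimodularity enters matches the paper's presentation.
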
 

This variational characterization has been successfully applied for decades, since the pioneer articles \cite{Jns, WngZll}, to study the existence of invariant Einstein metrics on homogeneous spaces (see \cite{BhmWngZll,Bhm,Wng} and references therein).  In this paper, we aim to use the second variation of $\scalar:\mca^G\rightarrow\RR$ to study $G$-rigidity.

\subsection{Trivial variations}\label{G-trivial}
According to \S \ref{G-rig-sec}, the space of trivial $G$-invariant variations of a metric $g\in\mca^G$ is given by the tangent space $T_g\Aut(G/K)\cdot g\subset\sca^2(M)^G$.  A distinguished subgroup of $\Aut(G/K)$ is the normalizer $N_G(K)$, which acts on $M$ by $n\cdot(a\cdot o)=I_n(a\cdot o):=nan^{-1}\cdot o$ and on $T_oM\equiv \ggo/\kg$ by $n\cdot X:=\Ad(n)X$.  Alternatively, the Lie group $N:=N_G(K)/K$ acts on $M$ by {\it $G$-equivariant diffeomorphisms} (i.e., $\psi(a\cdot p)=a\cdot \psi(p)$ for all $a\in G$, $p\in M$) in the following way: $n\cdot (a\cdot o)=R_n(a\cdot o):=an\cdot o$.  Thus $N\cdot g$ is contained in the so-called {\it $G$-equivariant isometry class} of the metric $g$, and since $R_n^*g=I_{n^{-1}}^*g$ for any $n\in N$, one obtains that 
\begin{equation}\label{norm}
N\cdot g=N_G(K)\cdot g, \qquad \forall g\in\mca^G.    
\end{equation}

We consider any {\it reductive decomposition} $\ggo=\kg\oplus\pg$ of the homogeneous space $M=G/K$ (i.e., $\Ad(K)\pg\subset\pg$), where $\ggo$ and $\kg$ are respectively the Lie algebras of $G$ and $K$, which provides the usual identification $T_oM\equiv\pg$.  Thus $\sca^2(M)^G$ will be often identified, without any further mention, with the vector space of $\Ad(K)$-invariant symmetric $2$-forms on $\pg$, and $\mca^G$ with the open cone of positive definite ones.  For each $X\in\pg$, consider the linear map 
\begin{equation}\label{adp}
\ad_\pg{X}:=\proy_\pg\circ \ad{X}|_\pg:\pg\rightarrow\pg,
\end{equation} 
where $\proy_\pg:\ggo\rightarrow\pg$ is the projection on $\pg$ relative to $\ggo=\kg\oplus\pg$. 

As shown in \cite[Lemma 6.10]{PRP}, at any $g\in\mca^G$, the trivial variations space satisfies that
\begin{equation}\label{tg}
T_g\Aut(G/K)\cdot g \subset \left\{ g_o(S(D)\cdot,\cdot):\underline{D}\in\Der(\ggo/\kg)\right\},
\end{equation}
where $S(A):=\unm(A+A^t)$ denotes the symmetric part of a linear map $A$ with respect to $g_o$ and  
$$
\Der(\ggo/\kg):=\{ \underline{D}\in\Der(\ggo):\underline{D}(\kg)\subset\kg\}, \qquad \underline{D}=\left[\begin{matrix} \ast&\ast\\ 0&D\end{matrix}\right].  
$$ 
We note that if 
$$
\pg_0:=\{ X\in\pg:[\kg,X]=0\}, 
$$ 
then $\ad{\pg_0}\subset\Der(\ggo/\kg)$ and the Lie algebra of $N_G(K)$ is given by $N_\ggo(\kg)=\kg\oplus\pg_0$.  On the other hand, $g_o(S(\ad_\pg{\pg})\cdot,\cdot)\cap\sca^2(M)^G\subset g_o(S(\ad_\pg{\pg})^\kg\cdot,\cdot)$, where 
$$
S(\ad_\pg{\pg})^\kg:=\left\{S(\ad_\pg{X}):X\in\pg, \,[\ad{\kg}|_\pg,S(\ad_\pg{X})]=0\right\},  
$$
and equality holds if $K$ is connected.  

\begin{lemma}\label{tan3}
For any $g\in\mca^G$, $S(\ad_\pg{\pg})^\kg = S(\ad_\pg{\pg_0})$ and 
$$
T_gN\cdot g = g_o(S(\ad_\pg{\pg_0})\cdot,\cdot).
$$
\end{lemma}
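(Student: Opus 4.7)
The plan is to prove the two equalities separately: first the set-theoretic identity $S(\ad_\pg{\pg})^\kg = S(\ad_\pg{\pg_0})$, using Jacobi for one inclusion and an averaging argument for the other; then the description of $T_gN\cdot g$, by differentiating the inner-automorphism action of $N_G(K)$ on $\mca^G$ and invoking \eqref{norm}.

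For the inclusion $S(\ad_\pg{\pg_0})\subset S(\ad_\pg{\pg})^\kg$, I would take $X\in\pg_0$ and $Y\in\kg$. Since $[X,Y]=0$ and $[\kg,\pg]\subset\pg$, a short computation with the reductive splitting gives $[\ad_\pg X,\ad_\pg Y]=\ad_\pg[X,Y]=0$ in $\End(\pg)$. The $\Ad(K)$-invariance of $g_o$ makes $\ad_\pg Y$ skew with respect to $g_o$, and the commutant of a skew endomorphism is closed under $g_o$-transposition; thus $(\ad_\pg X)^t$, and so also $S(\ad_\pg X)$, commutes with $\ad\kg|_\pg$. For the reverse inclusion, the idea is averaging. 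Let $\bar K$ be the (compact) closure of $\Ad(K^0)|_\pg$ inside $\Or(\pg,g_o)$. The hypothesis $[\ad\kg|_\pg,S(\ad_\pg X)]=0$ exponentiates to $\bar K$-invariance of $S(\ad_\pg X)\in\End(\pg)$. Put $\bar X:=\int_{\bar K}k\cdot X\,dk$; then $\bar X$ is $\bar K$-fixed, hence lies in $\pg^{K^0}=\pg_0$. The map $\Phi:X\mapsto S(\ad_\pg X)$ is $\Ad(K)$-equivariant (because $\ad(\Ad(k)X)=\Ad(k)\ad(X)\Ad(k)^{-1}$ and $\Ad(k)$ is $g_o$-orthogonal), so averaging the identity $\Phi(k\cdot X)=k\cdot\Phi(X)$ and using that $\Phi(X)$ is $\bar K$-fixed yields $\Phi(\bar X)=\Phi(X)$, i.e.\ $S(\ad_\pg X)=S(\ad_\pg\bar X)$ with $\bar X\in\pg_0$.

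For the second equality, by \eqref{norm} I can replace the $N$-orbit of $g$ by the $N_G(K)$-orbit acting through inner automorphisms; the Lie algebra of $N_G(K)$ is $N_\ggo(\kg)=\kg\oplus\pg_0$. For $X\in N_\ggo(\kg)$ and $n_t:=\exp(tX)$, the automorphism $I_{n_t}$ fixes the origin $o$ and, via the curve $\exp(sY)K\mapsto\exp(s\Ad(n_t)Y)K$, has differential at $o$ equal to $Y\mapsto\proy_\pg\Ad(n_t)Y$ on $T_oM\cong\pg$. Differentiating $(I_{n_t}^*g)_o(Y,Z)=g_o(\proy_\pg\Ad(n_t)Y,\proy_\pg\Ad(n_t)Z)$ at $t=0$ then gives $2\,g_o(S(\ad_\pg X)Y,Z)$. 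The $\kg$-component of $N_\ggo(\kg)$ contributes zero (since $I_k^*g=g$ for $k\in K$, by $\Ad(K)$-invariance of $g_o$), so the tangent space is exactly the image of $\pg_0$ under $X\mapsto g_o(S(\ad_\pg X)\cdot,\cdot)$, as claimed.

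The only non-formal step is the averaging in the second paragraph, which is the main obstacle to keep rigorous; the key input there is that $\Ad(K^0)|_\pg\subset\Or(\pg,g_o)$ has compact closure and that $\pg_0$ coincides with the set of vectors it fixes, so Haar averaging lands in $\pg_0$ and the equivariance of $\Phi$ preserves the relevant symmetric endomorphism. Everything else is direct from the reductive decomposition and from \eqref{tg} combined with \eqref{norm}.
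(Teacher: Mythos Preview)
Your proof is correct. The forward inclusion and the computation of $T_gN\cdot g$ are essentially the paper's arguments (the paper writes $\Ad(\exp tX)|_\pg$ without the projection, but your more careful $\proy_\pg\Ad(n_t)$ is what is really meant, and the derivative at $t=0$ is the same either way).

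The genuine difference is in the reverse inclusion $S(\ad_\pg\pg)^\kg\subset S(\ad_\pg\pg_0)$. The paper does not average; instead it introduces the subspace $\overline{\pg}:=\{Y\in\pg:(\ad_\pg Y)^t=-\ad_\pg Y\}=\Ker\Phi$, which is $\ad\kg|_\pg$-invariant (so is its orthogonal complement), splits $X=X_0+X_1$ along $\overline{\pg}\oplus\overline{\pg}^\perp$, and uses the identity $[\ad Z|_\pg,S(\ad_\pg X)]=S(\ad_\pg[Z,X])$ twice: once to see $[Z,X]\in\overline{\pg}$ from the hypothesis, and once more (with $X_0\in\overline{\pg}$) to force $[Z,X_1]\in\overline{\pg}\cap\overline{\pg}^\perp=0$, whence $X_1\in\pg_0$ and $S(\ad_\pg X)=S(\ad_\pg X_1)$. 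Your route replaces this linear-algebraic splitting by Haar averaging over the compact closure $\bar K$ of $\Ad(K^0)|_\pg$ in $\Or(\pg,g_o)$; the equivariance of $\Phi$ then does the work. The paper's argument is slightly more elementary (no measure theory, works verbatim without mentioning closures), while yours is a clean, conceptual alternative that makes transparent why the representative can be taken in the fixed-point set $\pg_0$. Both exploit the same key identity $[\ad Z|_\pg,S(\ad_\pg X)]=S(\ad_\pg[Z,X])$, just in different guises.
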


\begin{remark}\label{tan3-rem}
In the Lie group case, i.e., $M=G$ and $K$ trivial, we have that $S(\ad_\pg{\pg_0})=S(\ad{\ggo})$, so it is zero if and only if $g$ is bi-invariant.  
\end{remark}

\begin{proof}
Since $[\ad{Z}|_\pg,S(\ad_\pg{X})] = S(\ad_\pg{[Z,X]}|_\pg)$ for any $Z\in\kg$, we obtain that $S(\ad_\pg{\pg_0})\subset S(\ad_\pg{\pg})^\kg$.  Conversely, given $S(\ad_\pg{X})\in S(\ad_\pg{\pg})^\kg$, we consider the decomposition $X=X_0+X_1$, where $X_0\in\overline{\pg}:=\{ Y\in\pg:(\ad_\pg{Y})^t=-\ad_\pg{Y}\}$ and $X_1\perp\overline{\pg}$.  Note that both $\overline{\pg}$ and its orthogonal complement are $\ad{\kg}|_\pg$-invariant subspaces.  Thus $[Z,X]$ and $[Z,X_0]$ both belong to $\pg_0$ and so $[Z,X_1]=0$ for any $Z\in\kg$, from which follows that  $S(\ad_\pg{X})=S(\ad_\pg{X_1})\in S(\ad_\pg{\pg_0})$.   

The second equality can be proved using \eqref{norm} as follows.  For any $X\in\ggo$ such that $[X,\kg]\subset\kg$, 
\begin{align*}
\left.\ddt\right|_0 (I_{\exp{tX}})^*g =& \left.\ddt\right|_0 g_o(\Ad(\exp{tX})|_\pg\cdot,\Ad(\exp{tX})|_\pg\cdot) \\ 
=& g_o(\ad{X}|_\pg\cdot,\cdot)+g_o(\cdot,\ad{X}|_\pg\cdot).  
= 2g_o(S(\ad{X}|_\pg)\cdot,\cdot),
\end{align*}
Now if $X=X_\kg+X_\pg$, then $S(\ad{X}|_\pg)=S(\ad_\pg{X_\pg})$ (since $\ad{X_\kg}|_\pg$ is skew-symmetric) and $[X_\pg,\kg]\subset\kg\cap\pg=0$, i.e., $X_\pg\in\pg_0$.    
\end{proof}

Assume from now on in this subsection that $G$ is compact, thus $M$ and $K$ are also compact.  In this case, it is known that $N$ is the group of all $G$-equivariant diffeomorphisms of $M=G/K$ (see \cite[Chapter I, Corollary 4.3]{Brd}) and so $N$-orbits (or $N_G(K)$-orbits, see \eqref{norm}) are precisely the equivariant isometry classes.  Since $N_G(K)$ and $\Aut(G/K)$ have the same connected components of the identity, an Einstein metric $g$ is $G$-rigid if and only if any other $G$-invariant Einstein metric on $M$ near $g$ is equivariantly isometric up to scaling to $g$.  Furthermore, one obtains from Lemma \ref{tan3} the following useful description of the space of trivial $G$-invariant variations.

\begin{corollary}\label{tan2}
If $G$ is compact, then at any $g\in\mca^G$, 
$$
T_g\Aut(G/K)\cdot g = T_gN\cdot g = g_o(S(\ad_\pg{\pg_0})\cdot,\cdot).  
$$  
\end{corollary}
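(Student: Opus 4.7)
The plan is to deduce the corollary almost directly from Lemma~\ref{tan3} together with the identity-component fact recalled in the paragraph preceding the statement. The second equality $T_gN\cdot g=g_o(S(\ad_\pg{\pg_0})\cdot,\cdot)$ is literally Lemma~\ref{tan3}, so the only content to prove is the first equality $T_g\Aut(G/K)\cdot g=T_gN\cdot g$.

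First I would establish the inclusion $\supset$, which is routine: by \eqref{norm} one has $N\cdot g=N_G(K)\cdot g$, and each $n\in N_G(K)$ acts on $M$ as the inner automorphism $I_n\in\Aut(G/K)$, so $N\cdot g\subset\Aut(G/K)\cdot g$ and the corresponding tangent-space inclusion follows.

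For the reverse inclusion, I would invoke the general principle that the tangent space to a Lie-group orbit at a fixed point depends only on the identity component of the acting group, together with the fact stated just before the corollary that $N_G(K)$ and $\Aut(G/K)$ share the same identity component in $\Diff(M)$ when $G$ is compact. This immediately yields
\[
T_g\Aut(G/K)\cdot g=T_g\Aut(G/K)^0\cdot g=T_gN_G(K)^0\cdot g\subset T_gN_G(K)\cdot g=T_gN\cdot g.
\]

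The only nontrivial ingredient is the identity-component statement, which I view as the main (small) obstacle. The underlying reason is the standard fact that for a compact Lie group $G$ every element of $\Aut(G)^0$ is inner: writing $\ggo=\zg(\ggo)\oplus\ggo_{ss}$, one has $\Der(\ggo)=\glg(\zg(\ggo))\oplus\ad\ggo_{ss}$, but at the group level the $\Gl(\zg)$-factor is killed by the requirement that an automorphism of $G$ preserve the kernel of $\exp\colon\zg\to Z(G)^0$. Intersecting $\Inn(G)$ with the stabilizer of $K$ then realizes $\Aut(G/K)^0$ as the image of $N_G(K)^0$ in $\Diff(M)$. I would cite this fact rather than reprove it, and then assemble the corollary from the three steps above.
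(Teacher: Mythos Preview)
Your proposal is correct and follows essentially the same route as the paper. The paper does not give a separate proof of the corollary: it simply observes in the preceding paragraph that, for $G$ compact, $N_G(K)$ and $\Aut(G/K)$ have the same identity component in $\Diff(M)$, and then states that the corollary follows from Lemma~\ref{tan3}. Your argument unpacks precisely these two ingredients---the identity-component fact for the first equality and Lemma~\ref{tan3} for the second---and your additional sketch of why $\Aut(G)^0=\Inn(G)$ for compact $G$ (via the lattice constraint on the toral factor) is a justification the paper leaves implicit.
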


Contrary to what happens in the Lie group case (see Remark \ref{tan3-rem}), the space of trivial variations vanishes in many cases if $K$ is non-trivial:

\begin{enumerate}[{\small $\bullet$}] 
\item If $g\in\mca^G$ is {\it naturally reductive} with respect to $G$, i.e., there exists a reductive decomposition $\ggo=\kg\oplus\pg$ such that $\ad_\pg{X}$ is skew-symmetric for any $X\in\pg$, then $T_g\Aut(G/K)\cdot g=0$ by Corollary \ref{tan2}. 

\item Another direct consequence of Corollary \ref{tan2} is that $T_g\Aut(G/K)\cdot g=0$ for any $g\in\mca^G$ if the trivial representation does not appear in the $\kg$-isotropy representation of $M=G/K$ (i.e., $\pg_0=0$).  

\item If $G$ is compact and the isotropy representation of $G/K$ is {\it mutiplicity-free} (i.e., any two different $\Ad(K)$-invariant irreducible subspaces are inequivalent as $\Ad(K)$-representations), e.g., when $\rank(G)=\rank(K)$, then $N_G(K)\cdot g$ is finite and so $T_g\Aut(G/K)\cdot g=0$ for any $g\in\mca^G$.   
Indeed, the multiplicity-free condition is equivalent to the existence of only finitely many $\Ad(K)$-invariant subspaces of $\pg$, which implies that the connected component $N_G(K)^0$ necessarily leaves invariant any $\Ad(K)$-invariant and irreducible subspace of $\pg$ and consequently $N_G(K)^0$ acts trivially on $\mca^G$.  
\end{enumerate}

\subsection{$G$-invariant TT-tensors}\label{G-TT-sec} 
Recall from \S \ref{stab-sec} the divergence operator $\delta_g$ attached to a Riemannian metric $g$, and the space of TT-tensors $\tca\tca_g=\Ker\delta_g\cap\Ker\tr_g$.  The proof of the following lemma is strongly based on the proof of \cite[Lemma 2.2]{WngWng}.   

\begin{proposition}\label{G-TT}
If $G$ is unimodular and $g\in\mca^G$, then 
$$
\sca^2(M)^G = g_o(S(\ad_\pg{\pg_0})\cdot,\cdot) \oplus^{\perp_g} \Ker\delta_g\cap\sca^2(M)^G.  
$$
\end{proposition}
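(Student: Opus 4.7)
The plan is to reduce everything to a pointwise computation at the origin $o$, legitimate because every tensor in sight is $G$-invariant and so $\la\cdot,\cdot\ra_g$ from \eqref{metg} is determined by its value at $o$. As a preliminary, for $X\in\pg_0$ I would rewrite the element $T_X:=g_o(S(\ad_\pg X)\cdot,\cdot)\in\sca^2(M)^G$ as $T_X=-\unm\lca_{\hat X}g=-\delta_g^*(\hat X^\flat)$, where $\hat X$ is the $G$-invariant vector field on $M$ whose value at $o$ is $X$, i.e.\ the infinitesimal generator of the right translation $R_{\exp tX}$. This follows from the computation $\left.\ddt\right|_0(I_{\exp tX})^*g=2T_X$ in the proof of Lemma \ref{tan3}, together with $I_{\exp tX}=L_{\exp tX}\circ R_{\exp(-tX)}$ and the fact that the fundamental vector fields of the $G$-action are Killing.

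The main step is the pointwise identity
$$
\la T_X,T\ra_g=-g_o(X,(\delta_g T)^\sharp_o),\qquad X\in\pg_0,\ T\in\sca^2(M)^G,
$$
obtained from the standard Leibniz formula $\la\delta_g^*\alpha,T\ra-\la\alpha,\delta_g T\ra=\operatorname{div}(V)$ with $V^i:=\alpha_j T^{ij}$ and $\alpha:=\hat X^\flat$. One checks that $V=A\hat X$, where $A$ is the $G$-invariant endomorphism field with $T=g(A\cdot,\cdot)$, so $V$ is itself a $G$-invariant vector field on $M$. The heart of the matter is then the claim that every $G$-invariant vector field on the unimodular reductive homogeneous space $M$ has vanishing divergence; granting this, Leibniz collapses to $\la\delta_g^*\hat X^\flat,T\ra_g=\la\hat X^\flat,\delta_g T\ra_g=g_o(X,(\delta_g T)^\sharp_o)$, and the preliminary supplies the sign.

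The decomposition then drops out. Because $T\in\sca^2(M)^G$ forces $\delta_g T\in\Omega^1(M)^G$ and hence $(\delta_g T)^\sharp_o\in\pg^K=\pg_0$, the identity combined with the non-degeneracy of $g_o|_{\pg_0\times\pg_0}$ yields
$$
T\perp_g g_o(S(\ad_\pg\pg_0)\cdot,\cdot)\iff g_o(X,(\delta_g T)^\sharp_o)=0\ \forall X\in\pg_0 \iff \delta_g T=0.
$$
Hence the $\la\cdot,\cdot\ra_g$-orthogonal complement of $g_o(S(\ad_\pg\pg_0)\cdot,\cdot)$ in $\sca^2(M)^G$ is precisely $\Ker\delta_g\cap\sca^2(M)^G$, and the orthogonal decomposition follows from positive-definiteness of $\la\cdot,\cdot\ra_g$ on this finite-dimensional space.

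The main obstacle is the divergence-free claim above: it is the only step where the unimodularity of $G$ enters, and it replaces the integration-by-parts argument available only on compact $M$. I would establish it directly from the flow of $\hat Y$ for $Y\in\pg_0$: since $R_{\exp tY}$ is $G$-equivariant, the pullback $R_{\exp tY}^*\operatorname{dvol}_g$ is a $G$-invariant volume form and so equals $c(t)\operatorname{dvol}_g$; writing $R_n=L_n\circ I_{n^{-1}}$ and evaluating at $o$ identifies $c(t)$ with $\det((I_{\exp(-tY)})_*|_o)=e^{-t\,\tr\ad Y}$ (the last equality valid because $\ad Y|_\kg=0$ for $Y\in\pg_0$), so $\operatorname{div}(\hat Y)=-\tr\ad Y$, which vanishes by unimodularity.
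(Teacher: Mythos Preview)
Your proof is correct and takes a genuinely different route from the paper's. The paper computes $\delta_g(T)(X)$ directly at $o$ by extending a $g_o$-orthonormal basis $\{X_i\}$ of $\pg$ to a local frame of Killing fields and expanding $-\sum(\nabla_{X_i}T)(X_i,X)$ via the Koszul formula; after some cancellations (one of which uses $\tr\ad_\pg X_k=\tr\ad X_k=0$ by unimodularity) it obtains the pointwise identity $\delta_g(T)(X)=-\la T,g_o(S(\ad_\pg X)\cdot,\cdot)\ra_g$ for \emph{every} $X\in\pg$, and then invokes Lemma~\ref{tan3} to reduce to $\pg_0$. Your argument instead recognizes $T_X=-\delta_g^*(\hat X^\flat)$ for $X\in\pg_0$, applies the pointwise Leibniz identity for the pair $(\delta_g,\delta_g^*)$, and disposes of the divergence term by the clean lemma that $G$-invariant vector fields on a unimodular reductive homogeneous space are divergence-free. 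The two approaches meet at the same identity $\la T_X,T\ra_g=-(\delta_gT)_o(X)$.

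What each buys: the paper's computation is self-contained (no need to introduce $\hat X$ or the flow of right translations) and yields the formula for all $X\in\pg$, so Lemma~\ref{tan3} is visibly doing work. Your argument is more structural---it makes transparent that the decomposition is just the $\delta_g$/$\delta_g^*$ adjunction, with unimodularity replacing compactness in killing the ``boundary'' term---and isolates the divergence-free lemma as a reusable fact. One minor point: you write $\pg^K=\pg_0$, but the paper does not assume $K$ connected, so strictly one has $(\delta_gT)^\sharp_o\in\pg^{\Ad(K)}\subset\pg_0$; this inclusion is all you need, and your nondegeneracy argument on $\pg_0$ goes through unchanged.
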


\begin{remark}\label{G-TT-rem}
In particular, $\sca^2(M)^G\subset\Ker\delta_g$ and so $\tca\tca_g^G= \sca^2(M)^G\cap \Ker\tr_g$ under any of the above three assumptions, where 
$$
\tca\tca_g^G:= \sca^2(M)^G\cap \tca\tca_g
$$ 
is the space of all $G$-invariant TT-tensors.        
\end{remark}

\begin{proof}
Let $\{ X_i\}$ be a $g_o$-orthonormal basis of $\pg$ and extend it to a local frame of Killing vector fields.  Consider $T\in\sca^2(M)^G$. Then, at the point $o$ we have that
\begin{align*}
\delta_g(T)(X) =& -\sum (\nabla_{X_i}T)(X_i,X) = \sum -X_i(T(X_i,X)) + T(\nabla_{X_i}X_i,X) + T(X_i,\nabla_{X_i}X) \\ 
=& \sum T(X_i,[X,X_i]) + T(X_i,\nabla_{X_i}X) + T(\nabla_{X_i}X_i,X) \\ 
=& \sum T(X_i,\nabla_{X}X_i) + T(\nabla_{X_i}X_i,X) \\
=& \sum g(\nabla_{X}X_i,X_k)T(X_i,X_k) + \sum g(\nabla_{X_i}X_i,X_k)T(X_k,X). 
\end{align*}
It follows from the Koszul formula (recall that $[X_i,X_j]_o=-[X_i,X_j]_\pg$, where $\lb_\pg$ denotes the Lie bracket of $\ggo$ restricted and then projected on $\pg$) that the right summand equals 
\begin{align*}
\sum g_o([X_k,X_i]_\pg,X_i)T(X_k,X) =& \sum_k T(X_k,X) \sum_i g_o([X_k,X_i]_\pg,X_i) \\ 
=& \sum T(X_k,X)\tr{\ad_\pg{X_k}} =0,
\end{align*}
since $\tr{\ad_\pg{Y}}=\tr{\ad{Y}}=0$ for any $Y\in\pg$ as $G$ is unimodular, and the left one gives
\begin{align*}
&-\unm\sum g_o([X,X_i]_\pg,X_k)T(X_i,X_k) -\unm\sum g_o([X_i,X_k]_\pg,X)T(X_i,X_k) \\ 
&+\unm\sum g_o([X_k,X]_\pg,X_i)T(X_i,X_k) = -\unm\sum T([X,X_i]_\pg,X_i) -\unm\sum T([X,X_k]_\pg,X_k) \\ 
=& -\sum T([X,X_i]_\pg,X_i) = -\la T,g_o(S(\ad_\pg{X})\cdot,\cdot)\ra_g.  
\end{align*}
Note that the middle term vanishes since $\lb_\pg$ and $T$ are respectively skew-symmetric and symmetric bilinear forms.  Thus a tensor $T\in\sca^2(M)^G$ is divergence-free if and only if $T\perp g_o(S(\ad_\pg{X})\cdot,\cdot)$ for any $X\in\pg$, which is equivalent to $T\perp g_o(S(\ad_\pg{\pg_0})\cdot,\cdot)$ by Lemma \ref{tan3} and the fact that $T$ is $\Ad(K)$-invariant.   
\end{proof}

It follows from Corollary \ref{tan2} and Proposition \ref{G-TT} that the space of all $G$-invariant variations $T_g\mca^G=\sca^2(M)^G$ admits the following decomposition in the compact case.  

\begin{corollary}\label{tan4}
If $G$ is compact, then at any $g\in\mca^G$, 
$$
T_g\mca^G = \RR g\oplus^{\perp_g} T_g\Aut(G/K)\cdot g \oplus^{\perp_g} \tca\tca_g^G.   
$$
\end{corollary}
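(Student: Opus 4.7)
The plan is to combine Corollary \ref{tan2} and Proposition \ref{G-TT}, which were stated and proved just above, with a routine trace decomposition argument.

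First, since $G$ is compact it is unimodular, so Proposition \ref{G-TT} applies and yields the $\ip_g$-orthogonal decomposition
$$
\sca^2(M)^G \;=\; g_o(S(\ad_\pg{\pg_0})\cdot,\cdot) \;\oplus^{\perp_g}\; \bigl(\Ker\delta_g\cap\sca^2(M)^G\bigr).
$$
By Corollary \ref{tan2}, the first summand equals $T_g\Aut(G/K)\cdot g$, so it only remains to split off an $\RR g$-factor from $\Ker\delta_g\cap\sca^2(M)^G$.

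Next I note that $g\in\Ker\delta_g\cap\sca^2(M)^G$: indeed $g$ is $G$-invariant by definition and $\delta_g g=0$ since $\nabla g=0$ (this also follows formally from the computation in the proof of Proposition \ref{G-TT}, as $g_o(S(\ad_\pg X)\cdot,\cdot)$ is $\ip_g$-orthogonal to any element of $\sca^2(M)^G$ when $X\in\pg$ is replaced by the metric itself). From the very definition \eqref{metg} of $\ip_g$ one has $\la g,T\ra_g = \tr A = \tr_g T$ whenever $T_o=g_o(A\cdot,\cdot)$, so $g^{\perp_g}\cap\sca^2(M)^G$ is precisely $\Ker\tr_g\cap\sca^2(M)^G$.

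For any $T\in\Ker\delta_g\cap\sca^2(M)^G$, write $T=\tfrac{\tr_g T}{n}\,g + T_0$ with $T_0:=T-\tfrac{\tr_g T}{n}g$. Then $\tr_g T_0=0$, $T_0\in\sca^2(M)^G$, and $\delta_g T_0 = \delta_g T - \delta_g\!\bigl(\tfrac{\tr_g T}{n} g\bigr) = 0$, so $T_0\in\tca\tca_g^G$. This gives the $\ip_g$-orthogonal decomposition
$$
\Ker\delta_g\cap\sca^2(M)^G \;=\; \RR g \;\oplus^{\perp_g}\; \tca\tca_g^G.
$$
Combining the two displayed decompositions yields the corollary. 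The only point worth a sanity check is that the three summands are pairwise $\ip_g$-orthogonal: $T_g\Aut(G/K)\cdot g$ is orthogonal to $\RR g\oplus^{\perp_g}\tca\tca_g^G$ by Proposition \ref{G-TT}, and $\RR g\perp_g\tca\tca_g^G$ by the trace identity above. There is no real obstacle here — the statement is essentially a bookkeeping consequence of the two previous results together with the $\RR g$/trace-free splitting.
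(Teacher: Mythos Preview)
Your proof is correct and takes essentially the same approach as the paper: the paper simply states that the corollary follows from Corollary~\ref{tan2} and Proposition~\ref{G-TT}, and you have filled in exactly those details together with the routine $\RR g$/trace-free splitting. The only quibble is that your parenthetical aside about ``replacing $X$ by the metric'' is garbled and should be deleted; the main reason $\delta_g g=0$ (namely $\nabla g=0$) is already sufficient.
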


Recall from Remark \ref{G-TT-rem} that $T_g\mca^G = \RR g\oplus^{\perp_g} \tca\tca_g^G$ therefore holds in many natural cases.  Curiously enough, as far as we know, $S^2\times S^3=\SO(4)/\SO(2)$ is the only homogeneous space $G/K$ with $\dim{K}>0$ known such that $T_g\Aut(G/K)\cdot g$ is nonzero for a $G$-invariant Einstein metric $g$ (see \cite[Example 3.7]{stab-dos}).

\subsection{$G$-stability}\label{G-stab-sec} 
Since the function $\scalar$ is constant on $\Aut(G/K)\cdot g$, its second variation $\scalar''_g$ vanishes on $T_g\Aut(G/K)\cdot g$.  Note that $\scalar''(g,g)=2\scalar(g)$.  On the other hand, if $g\in\mca^G$ is Einstein, then the orbit $\Aut(G/K)\cdot g$ consists of Einstein metrics and so $\E|_{\Aut(G/K)\cdot g} \equiv 0$ and $\E(\RR_+g)=0$, where 
$$
\E:\mca^G\longrightarrow\sca^2(M)^G, \qquad \E(g'):=\ricci(g')-\tfrac{\scalar(g')}{n}g',
$$  
is the Einstein operator or traceless Ricci tensor (cf.\ \eqref{Eop}).  

At each $g\in\mca^G$, we consider the following decomposition,
\begin{equation}\label{tg2}
T_g\mca^G = \left(\RR g \oplus T_g\Aut(G/K)\cdot g\right) \oplus^{\perp_g} W_g,  
\end{equation} 
where $W_g$ is defined as the $\ip_g$-orthogonal complement of the space $\RR g \oplus T_g\Aut(G/K)\cdot g$ of trivial variations.  According to Proposition \ref{G-TT} and \eqref{tg2}, if $G$ is unimodular, then $W_g\subset\tca\tca_g^G$, and if in addition $G$ is compact, then by Corollary \ref{tan4}, 
\begin{equation}\label{WTT} 
W_g=\tca\tca_g^G,
\end{equation}
the vector space of $G$-invariant TT-tensors.  

\begin{remark}
The existence of $G$-invariant Einstein metrics on $M=G/K$ for a non-compact unimodular $G$ is open.  It is proved in \cite{DttLtMtl} that $G$ must be semisimple, hence such existence would provide a counterexample to the {\it Alekseevsky conjecture}: any non-compact and non-flat homogeneous Einstein manifold is isometric to a simply connected solvmanifold (in particular, diffeomorphic to the Euclidean space).  After the conclusion of the first version of this paper, a proof of the Alekseevsky conjecture was uploaded to arXiv by C. B\"ohm and R. Lafuente (see \cite{BhmLfn2}).   
\end{remark}

We are now ready to define the notions of stability and deformability in the $G$-invariant setting (cf.\ Definition \ref{stab-def-2}).   

\begin{definition}\label{stab-def}
An Einstein metric $g\in\mca^G_1$ is said to be,  
\begin{enumerate}[{\small $\bullet$}] 
\item {\it $G$-stable}: $\scalar''_g|_{W_g\times W_g}<0$ (in particular, $g$ is a local maximum of $\scalar|_{\mca^G_1}$, by using a slice for the $\Aut(G/K)$-action on $\mca^G$). 

\item {\it $G$-unstable}: $\scalar''_g(T,T)>0$ for some $T\in W_g$ ($g$ is a saddle point, unless $\scalar''_g|_{W_g\times W_g}>0$, see below).  The {\it coindex} is the dimension of the maximal subspace of $W_g$ on which $\scalar''_g$ is positive definite.   

\item {\it $G$-non-degenerate}: $\scalar''_g|_{W_g\times W_g}$ non-degenerate (thus $g$ is an isolated critical point up to the $\Aut(G/K)$-action, i.e., $g$ is rigid), and otherwise, {\it $G$-degenerate}.  The {\it nullity} is the dimension of the kernel of $\scalar''_g|_{W_g\times W_g}$.  Recall from \S\ref{stab-sec} that $G$-non-degeneracy is equivalent to {\it $G$-infinitesimal non-deformability}: $\Ker d\E|_g\cap W_g=0$, 
where $d\E|_g:\sca^2(M)^G\rightarrow\sca^2(M)^G$ is the derivative of $\E$.  

\item {\it $G$-neutrally stable}: $\scalar''_g|_{W_g\times W_g}\leq 0$ and degenerate (i.e., $g$ is $G$-degenerate and it is not $G$-unstable).  Note that this must hold for any local maximum.  

\item {\it $G$-strongly unstable}: $\scalar''_g|_{W_g\times W_g}> 0$ ($g$ is therefore a local minimum of $\scalar|_{\mca^G_1}$).  
\end{enumerate}
\end{definition}

\begin{remark}
Recall that the prefix $G$ in the name of the different notions is referring not only to the group $G$ but also to its action on $M$, which has been fixed at the beginning of the section.  
\end{remark}

If an Einstein metric $g\in\mca^G$ is $G$-stable, then $g$ is clearly $G$-non-degenerate, which in turn implies that $g$ is $G$-rigid by Proposition \ref{G-rig}.  On the other hand, it follows from \eqref{WTT} and \S\ref{stab-sec} that if $G$ is compact, then
$$
\mbox{$G$-instability} \Rightarrow  \mbox{$\scalar$-instability} \Rightarrow  \mbox{$\nu$-instability} \Rightarrow  \mbox{dynamical instability},  
$$
and that non-rigidity also follows from the assumption that the corresponding $G$-invariant concept holds.      

In \cite[Theorems 1.3, 1.4, 1.5]{WngWng}, the authors obtained that all Einstein metrics on Aloff-Wallach spaces are $G$-unstable, as well as any $G$-invariant Einstein metric on a homogeneous space $G/K$ ($(G,K)$ not a symmetric pair) of dimension $\leq 7$, except for $\SU(2)\times\SU(2)$ and the isotropy irreducible $\Spe(2)/\SU(2)$ (see also \cite{SmmWngWng}).   

\begin{remark}
In \cite{PRP}, the Ricci curvature function 
$$
\ricci:\mca^G\rightarrow\sca^2(M)^G, \qquad g\mapsto\ricci(g),
$$
and its derivative $d\ricci|_g:\sca^2(M)^G\rightarrow\sca^2(M)^G$, at each $g\in\mca^G$, were used in the study of the prescribed Ricci curvature problem.  Given an Einstein metric $g\in\mca^G$, say $\ricci(g)=\rho g$, it is easy to see that restricted to $(\RR g)^{\perp_g}$, $d\E|_g = d\ricci|_g - \rho\id$.  On the other hand, we will show below in \S \ref{mba-sec} that $\scalar''_g(T,T) = \la (\rho\id-d\ricci|_g)T,T\ra_g$, for any $T\in \sca^2(M)^G$.  Thus the stability type of $g$ is determined by $\Spec\left(d\ricci|_g|_{W_g}\right)$.  The operator $d\ricci|_g$, which restricted to $W_g$ is precisely one half of the Lichnerowicz Laplacian $\Delta_L$ when $G$ is compact, was computed in \cite{PRP} in terms of the moment map of the variety of algebras via the moving bracket approach.  This is developed in \S \ref{mba-sec}.  
\end{remark}

\section{Second variation of the scalar curvature}\label{mba-sec}

Given $M^n=G/K$ as in \S 3, we consider any reductive decomposition $\ggo=\kg\oplus\pg$ in order to obtain the usual identifications $T_oM\equiv\pg$ and 
$$
\sca^2(M)^G\leftrightarrow\sym^2(\pg)^K, \qquad \mca^G\leftrightarrow\sym_+^2(\pg)^K,
$$
where $\sym^2(\pg)^K$ is the vector space of all $\Ad(K)$-invariant symmetric $2$-forms on the $n$-dimensional vector space $\pg$ and $\sym_+^2(\pg)^K$ the open cone of positive ones. 

\begin{remark}
It is usual in the literature the choice of $\pg$ as the orthogonal complement of $\kg$ relative to some bi-invariant inner product on $\ggo$, which always exists for $G$ compact.  However, this choice may hide, among other nice properties, the fact that a metric is naturally reductive with respect to $G$.  
\end{remark}

We also fix a background metric $g\in\mca^G$ and set $\ip:=g_o\in\sym_+^2(\pg)^K$.  This allows the following alternative identifications in terms of operators:
$$
\sym(\pg)^K \ni A\leftrightarrow T=\la A\cdot,\cdot\ra\in\sym^2(\pg)^K, \qquad
\sym_+(\pg)^K \ni h\leftrightarrow \la h\cdot,h\cdot\ra\in\sym_+^2(\pg)^K,
$$
where $\sym(\pg)$ is the vector space of all self-adjoint (or symmetric) linear maps of $\pg$ with respect to $\ip$ and $\sym_+(\pg)$ the open subset of those which are positive definite.  Note that $A\in\sym(\pg)$ belongs to $\sym(\pg)^K$ if and only if $[\Ad(K)|_\pg,A]=0$ (equivalently, $[\ad{\kg}|_\pg,A]=0$, if $K$ is connected).

\subsection{Ricci curvature} 
Let $\mu$ denote the Lie bracket of $\ggo$.  We extend $\ip$ in the usual way to inner products on $\glg(\pg)$ and $\Lambda^2\pg^*\otimes\pg$, respectively:
$$
\la A,B\ra:= \tr{AB^t}, \qquad \la\lambda,\lambda\ra:=\sum |\ad_\lambda{X_i}|^2 = \sum |\lambda(X_i,X_j)|^2,
$$
where $\{ X_i\}$ is any orthonormal basis of $\pg$ relative to $\ip$.  We also consider the algebra product, 
\begin{equation}\label{mup}
\mu_\pg:= \proy_\pg\circ \mu|_{\pg\times\pg} :\pg\times\pg\longrightarrow\pg,
\end{equation}
where $\proy_\pg:\ggo\rightarrow\pg$ is the projection on $\pg$ relative to $\ggo=\kg\oplus\pg$, and consider the linear maps $\ad_\pg{X}:=\mu_\pg(X,\cdot)$, $X\in\pg$, as in \eqref{adp}.

If $G$ is unimodular, then the Ricci operator $\Ricci(g)$ of the metric $g$ (see e.g.\ \cite[(5)]{PRP}) is given by
\begin{equation}\label{Ric2}
\Ricci(g) = \Mm_{\mu_\pg} - \unm\kil_\mu,
\end{equation}
where $\la\kil_\mu\cdot,\cdot\ra:=\kil_{\ggo}|_{\pg\times\pg}$, $\kil_{\ggo}$ denotes the Killing form of the Lie algebra $\ggo$ and
\begin{equation}\label{mm2}
\la\Mm_{\mu_\pg},A\ra := \unc\la\theta(A)\mu_\pg,\mu_\pg\ra, \qquad\forall A\in\glg(\pg).  
\end{equation}
Here $\theta$ is the representation of $\glg(\pg)$ given by,
\begin{equation}\label{tita}
\theta(A)\lambda := A\lambda(\cdot,\cdot) - \lambda(A\cdot,\cdot) - \lambda(\cdot,A\cdot), \qquad \forall A\in\glg(\pg), \quad \lambda\in\Lambda^2\pg^*\otimes\pg.
\end{equation}
The function $\Mm:\Lambda^2\pg^*\otimes\pg\rightarrow\sym(\pg)$ is therefore the {\it moment map} from geometric invariant theory (see e.g.\ \cite{BhmLfn} and the references therein) for the representation $\theta$ of $\glg(\pg)$.  Equivalently, 
\begin{equation}\label{mm3}
\Mm_{\mu_\pg} = -\unm\sum (\ad_\pg{X_i})^t\ad_\pg{X_i} + \unc\sum \ad_\pg{X_i}(\ad_\pg{X_i})^t,
\end{equation}
or
\begin{equation}\label{mm4}
\la\Mm_{\mu_\pg}X,X\ra = -\unm\sum \la\mu_\pg(X,X_i),X_j\ra^2+ \unc\sum \la\mu_\pg(X_i,X_j),X\ra^2, \qquad\forall X\in\pg.  
\end{equation} 
It is easy to check that both operators $\Mm_{\mu_\pg}$ and $\kil_\mu$ belong to $\sym(\pg)^K$.  The main part of the Ricci curvature of $g$ is $\Mm_{\mu_\pg}$, observe that $\kil_\mu$ is just measuring in some sense how far is $g$ from being standard.  It follows from \eqref{mm2} and \eqref{tita} that $\tr{\Mm_{\mu_\pg}}=\la\Mm_{\mu_\pg},I\ra=-\unc|\mu_\pg|^2$ and so by \eqref{Ric2},
\begin{equation}\label{scal2}
\scalar(g) = -\unc|\mu_\pg|^2  - \unm\tr{\kil_\mu}.
\end{equation}
We refer to \cite{alek} for more details on this viewpoint on Ricci curvature.

\subsection{Moving bracket approach} 
Recall that $\mu$ is the Lie bracket of $\ggo$.  Given $h\in\sym_+(\pg)^K$, we consider the new Lie algebra $(\ggo,\underline{h}\cdot\mu)$, where $\underline{h}\in\Gl(\ggo)$ is defined by $\underline{h}|_\kg:=I$, $\underline{h}|_\pg:=h$.  Here $\underline{h}\cdot\mu:=\underline{h}\mu(\underline{h}^{-1}\cdot,\underline{h}^{-1}\cdot)$  is the usual action of $\Gl(\ggo)$ on $\Lambda^2\ggo^*\otimes\ggo$, so $\underline{h}:(\ggo,\mu)\rightarrow(\ggo,\underline{h}\cdot \mu)$ is a Lie algebra isomorphism.  Now for  any Lie group $G_{\underline{h}\cdot\mu}$ with Lie algebra $(\ggo,\underline{h}\cdot \mu)$ such that there is an isomorphism $G\rightarrow G_{\underline{h}\cdot\mu}$ with derivative $\underline{h}$, one obtains an isometry between the following Riemannian homogeneous spaces,
\begin{equation}\label{isom}
(G/K,\la h\cdot,h\cdot\ra) \longrightarrow (G_{\underline{h}\cdot\mu}/K_{\underline{h}\cdot\mu},\ip),
\end{equation}
where $K_{\underline{h}\cdot\mu}$ is the image of $K$ under the isomorphism.  Note that $K_{\underline{h}\cdot\mu}$ is a Lie subgroup of $G_{\underline{h}\cdot\mu}$ with Lie algebra $(\kg,\underline{h}\cdot\mu|_{\kg\times\kg})$ and that $\ggo=\kg\oplus\pg$ is a reductive decomposition for every homogeneous space $G_{\underline{h}\cdot\mu}/K_{\underline{h}\cdot\mu}$, $h\in\sym_+(\pg)^K$.  Therefore, by varying the Lie brackets as in the right of \eqref{isom}, one is covering the whole set $\mca^G$ (see \cite{sol-HS} and references therein for further information).  

We assume from now on in this section that $G$ is unimodular (see \cite[\S 2.2]{PRP} for the general case).  According to \eqref{Ric2}, for any $h\in\sym_+(\pg)^K$, the Ricci operator of $(G_{\underline{h}\cdot\mu}/K_{\underline{h}\cdot\mu},\ip)$ is given by
\begin{equation}\label{Ric}
\Ricci_{\underline{h}\cdot\mu} = \Mm_{h\cdot\mu_\pg} - \unm h^{-1}\kil_\mu h^{-1}.  
\end{equation}
Note that $h^{-1}\kil_\mu h^{-1}$ is the Killing form operator of the Lie algebra $(\ggo,\underline{h}\cdot\mu)$   and by \eqref{mm2},
\begin{equation}\label{mm1}
\la\Mm_{h\cdot\mu_\pg},A\ra := \unc\la\theta(A)(h\cdot\mu_\pg),h\cdot\mu_\pg\ra, \qquad\forall A\in\glg(\pg).  
\end{equation}
It follows from \eqref{isom} that the Ricci tensor and the Ricci operator of each metric $g_h:=\la h\cdot,h\cdot\ra\in\mca^G$ are respectively given by 
$$
\ricci(g_h)=\la h\Ricci_{\underline{h}\cdot\mu}h\cdot,\cdot\ra, \qquad \Ricci(g_h) = h^{-1}\Ricci_{\underline{h}\cdot\mu}h, \qquad\forall h\in\sym_+(\pg)^K,
$$
and by \eqref{scal2}, 
\begin{equation}\label{scal}
\scalar(g_h) = -\unc| h\cdot\mu_\pg|^2  - \unm\tr{\kil_\mu}h^{-2}.
\end{equation}
In order to study the different types of $G$-stability and $G$-deformability (see Definition \ref{stab-def}), using the moving-bracket approach described above, we consider the functions
\begin{equation}\label{Rc-MBA}
\overline{\ricci}, \overline{\E}:\sym_+(\pg)^K\longrightarrow\sym^2(\pg)^K, \qquad \overline{\scalar}:\sym_+(\pg)^K\longrightarrow\RR, 
\end{equation}
defined by $\overline{\ricci}(h):=\ricci(g_h)$, $
\overline{\scalar}(h):=\scalar(g_h)$ and $\overline{\E}(h):=\E(g_h)=\overline{\ricci}(h)-\frac{\overline{\scalar}(h)}{n}g_h$, for any $h\in\sym_+(\pg)^K$.

\subsection{First variation of $\scalar$}\label{FVSc} 
Let $S:\glg(\pg)\rightarrow\sym(\pg)$ denote the symmetric part operator $S(A):=\unm(A+A^t)$ relative to $\ip$.   

\begin{lemma}\label{dSc}
At any $h\in\sym_+(\pg)^K$, if $h(t)\in\sym_+(\pg)^K$, $h(0)=h$, $h'(0)=A$ (e.g., $h(t)=h+tA$ or $h(t)=he^{th^{-1}A}$), then
$$
\overline{\scalar}'_h(A) :=\left.\ddt\right|_0 \overline{\scalar}(h(t)) = -2\la\Ricci_{\underline{h}\cdot\mu},S(Ah^{-1})\ra, \qquad\forall A\in\sym(\pg)^K.  
$$
\end{lemma}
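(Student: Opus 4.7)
The plan is to differentiate the explicit formula \eqref{scal},
\[
\overline{\scalar}(h)=-\tfrac14|h\cdot\mu_\pg|^2-\tfrac12\tr(\kil_\mu h^{-2}),
\]
directly along the line $h(t)=h+tA$; since $\overline{\scalar}$ is smooth on the open cone $\sym_+(\pg)^K$, the directional derivative depends only on the velocity $A=h'(0)$, so any other curve with the same initial velocity gives the same answer.

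The key first step is the auxiliary identity
\[
\left.\ddt\right|_0 (h(t)\cdot\mu_\pg)=\theta(Ah^{-1})(h\cdot\mu_\pg),
\]
which I would derive by writing $(h(t)\cdot\mu_\pg)(X,Y)=h(t)\mu_\pg(h(t)^{-1}X,h(t)^{-1}Y)$ and applying the product rule with $\left.\ddt\right|_0 h(t)^{-1}=-h^{-1}Ah^{-1}$; after recognizing $h\,\mu_\pg(h^{-1}\cdot,h^{-1}\cdot)=h\cdot\mu_\pg$ inside each of the three summands, they assemble exactly into the right-hand side via the definition \eqref{tita} of $\theta$. Combined with the moment-map identity \eqref{mm1}, this yields
\[
\left.\ddt\right|_0 \left(-\tfrac14|h(t)\cdot\mu_\pg|^2\right)=-\tfrac12\la\theta(Ah^{-1})(h\cdot\mu_\pg),h\cdot\mu_\pg\ra=-2\la\Mm_{h\cdot\mu_\pg},Ah^{-1}\ra.
\]

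For the Killing-form term, using $\left.\ddt\right|_0 h(t)^{-2}=-h^{-1}Ah^{-2}-h^{-2}Ah^{-1}$ together with the cyclic property of the trace, I would rewrite
\[
\left.\ddt\right|_0\tr(\kil_\mu h(t)^{-2})=-\la h^{-1}\kil_\mu h^{-1},\,Ah^{-1}+h^{-1}A\ra,
\]
where the matching uses that $(Ah^{-1})^t=h^{-1}A$ and that $h^{-1}\kil_\mu h^{-1}$ is self-adjoint. Since $2S(Ah^{-1})=Ah^{-1}+h^{-1}A$, this gives
\[
\left.\ddt\right|_0 \left(-\tfrac12\tr(\kil_\mu h(t)^{-2})\right)=\la h^{-1}\kil_\mu h^{-1},S(Ah^{-1})\ra.
\]

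Finally, the operator $\Mm_{h\cdot\mu_\pg}\in\sym(\pg)$ is self-adjoint, so $\la\Mm_{h\cdot\mu_\pg},Ah^{-1}\ra=\la\Mm_{h\cdot\mu_\pg},S(Ah^{-1})\ra$; adding the two contributions and using the formula $\Ricci_{\underline{h}\cdot\mu}=\Mm_{h\cdot\mu_\pg}-\tfrac12h^{-1}\kil_\mu h^{-1}$ from \eqref{Ric} collapses the sum to the claimed $-2\la\Ricci_{\underline{h}\cdot\mu},S(Ah^{-1})\ra$. There is no serious obstacle; the main care needed is the chain-rule identification of $\theta(Ah^{-1})$ and the absorption of all pieces into a single pairing against $S(Ah^{-1})$ via the self-adjointness of $\Mm_{h\cdot\mu_\pg}$ and $h^{-1}\kil_\mu h^{-1}$.
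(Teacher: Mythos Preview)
Your proposal is correct and follows essentially the same route as the paper: differentiate the explicit formula \eqref{scal}, use the key identity $\left.\ddt\right|_0(h(t)\cdot\mu_\pg)=\theta(Ah^{-1})(h\cdot\mu_\pg)$ (the paper's \eqref{ddthmu} at $t=0$) together with the moment-map relation \eqref{mm1}, handle the Killing term via $\left.\ddt\right|_0 h(t)^{-2}$, and then collect everything using \eqref{Ric}. Your added remark that self-adjointness of $\Mm_{h\cdot\mu_\pg}$ lets one replace $Ah^{-1}$ by $S(Ah^{-1})$ is exactly what the paper leaves implicit in its final line.
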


\begin{remark}
At the background metric $g$, i.e., $h=I$, in accordance with \eqref{grad-sc}, the following simpler formula holds: 
$$
\scalar'_g(T):=\left.\ddt\right|_0 \scalar(g+tT) =\unm\overline{\scalar}'_I(A) = -\la\Ricci_{\mu},A\ra = -\la\ricci(g),T\ra_g,   
$$   
for any $A\in\sym(\pg)^K$, where $T\in\sca^2(M)^G$, $T_o=\la A\cdot,\cdot\ra\in \sym^2(\pg)^K$.
\end{remark}

\begin{proof} 
We first give the following useful formula, which is easy to prove using \eqref{tita}:
\begin{equation}\label{ddthmu}
\ddt \left(h(t)\cdot\mu_\pg\right) = \theta\left(h'(t)h(t)^{-1}\right)\left(h(t)\cdot\mu_\pg\right).  
\end{equation}
It now follows from \eqref{scal} and \eqref{ddthmu} that 
\begin{align*}
\left.\ddt\right|_0 \overline{\scalar}(h(t)) =& -\unc\left.\ddt\right|_0 |h(t)\cdot\mu_\pg|^2 - \unm\left.\ddt\right|_0 \tr{\kil_\mu h(t)^{-2}} \\ 
=& -\unm\la\left.\ddt\right|_0 h(t)\cdot\mu_\pg,h\cdot\mu_\pg\ra - \unm \tr{\kil_\mu \left.\ddt\right|_0h(t)^{-2}} \\ 
=& -\unm\la\theta(Ah^{-1})h\cdot\mu_\pg, h\cdot\mu_\pg\ra + \unm\tr{\kil_\mu h^{-1}Ah^{-2}} + \unm\tr{\kil_\mu h^{-2}Ah^{-1}} \\  
=& -2\la\Mm_{h\cdot\mu_\pg},Ah^{-1}\ra + \tr{h^{-1}\kil_{\mu}h^{-1}S(Ah^{-1})} \\ 
=& -2\la\Ricci_{\underline{h}\cdot\mu},S(Ah^{-1})\ra,
\end{align*}
where the last equality follows from \eqref{Ric}.  
\end{proof}

Since $d\det|_hA=(\det{h})\tr{Ah^{-1}}$, if 
$$
\sym_+(\pg)_1:=\{ h\in\sym_+(\pg):\det{h}=1\},
$$ 
then 
$$
T_h\sym_+(\pg)_1^K = \left\{ A\in\sym(\pg)^K : \tr{Ah^{-1}}=0\right\},
$$
so the following corollary analogous to Lemma \ref{Palais} follows.  

\begin{corollary}\label{dSc-cor}
$h\in\sym_+(\pg)_1^K$ is a critical point of $\overline{\scalar}:\sym_+(\pg)_1^K\longrightarrow\RR$ if and only if the metric $g_h\in\mca^G$ is Einstein.    
\end{corollary}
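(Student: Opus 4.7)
The plan is to apply Lemma \ref{dSc} and then perform a linear change of variable that converts the critical-point condition on $\sym_+(\pg)_1^K$ into the Einstein condition as expressed via \eqref{Ric}. Since $d\det|_h A = (\det h)\tr(h^{-1}A)$, one has $T_h\sym_+(\pg)_1^K = \{A\in\sym(\pg)^K : \tr(Ah^{-1})=0\}$, and Lemma \ref{dSc} reduces the critical-point condition for $\overline{\scalar}|_{\sym_+(\pg)_1^K}$ to the requirement
$$
\la\Ricci_{\underline{h}\cdot\mu},S(Ah^{-1})\ra = 0 \qquad \text{for every } A\in\sym(\pg)^K \text{ with } \tr(Ah^{-1})=0.
$$

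The key step is the substitution $C := S(Ah^{-1}) = \tfrac{1}{2}(Ah^{-1} + h^{-1}A)$. The Lyapunov operator $A\mapsto Ah^{-1}+h^{-1}A$ on $\sym(\pg)$ is invertible, because in an $\ip$-orthonormal eigenbasis of $h^{-1}$ with positive eigenvalues $d_i^{-1}$ its kernel satisfies $A_{ij}(d_i^{-1}+d_j^{-1})=0$ for all $i,j$. Since $h^{-1}\in\sym_+(\pg)^K$, this operator commutes with $\Ad(K)|_\pg$-conjugation and so restricts to a bijection of $\sym(\pg)^K$ onto itself. Moreover $\tr C = \tr(Ah^{-1})$, so the substitution maps $\{A\in\sym(\pg)^K:\tr(Ah^{-1})=0\}$ bijectively onto $\{C\in\sym(\pg)^K:\tr C=0\}$.

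The conclusion is then that $h$ is critical if and only if $\Ricci_{\underline{h}\cdot\mu}\in\sym(\pg)^K$ is orthogonal to every traceless $C\in\sym(\pg)^K$, equivalently $\Ricci_{\underline{h}\cdot\mu} = \rho\,I$ for some $\rho\in\RR$, since the orthogonal complement of the traceless subspace within $\sym(\pg)^K$ is $\RR I$. By \eqref{Ric}, this reads $\Ricci(g_h) = h^{-1}\Ricci_{\underline{h}\cdot\mu}h = \rho I$, i.e.\ $g_h$ is Einstein. The only non-formal input is the invertibility of the Lyapunov substitution on $\sym(\pg)^K$, which is elementary linear algebra, so no serious obstacle is anticipated.
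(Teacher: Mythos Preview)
Your proof is correct and is precisely the natural way to unpack the paper's terse ``so the following corollary \ldots\ follows'': the paper offers no argument beyond computing $T_h\sym_+(\pg)_1^K$ and invoking Lemma~\ref{dSc}, and your Lyapunov substitution $A\mapsto S(Ah^{-1})$ is exactly the linear-algebra step needed to convert the resulting orthogonality condition into $\Ricci_{\underline{h}\cdot\mu}\in\RR I$. In fact you supply more detail than the paper does; the approaches are the same.
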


\subsection{First variation of $\ricci$}\label{FVRc}
The derivative of the Ricci curvature function at the background metric $g\in\mca^G$ ($\ip=g_o$) was computed in \cite{PRP}.  We consider the maps
$$
\delta_{\mu_{\pg}}:\glg(\pg)\longrightarrow\Lambda^2\pg^*\otimes\pg, \qquad \delta_{\mu_{\pg}}^t:\Lambda^2\pg^*\otimes\pg\longrightarrow\glg(\pg),
$$
where $\delta_{\mu_{\pg}}(A):=-\theta(A)\mu_\pg$ (see \eqref{tita}) and $\delta_{\mu_{\pg}}^t$ is the transpose of $\delta_{\mu_{\pg}}$, and define the following operator,
\begin{equation}\label{Cmup}
\lic_\pg=\lic_\pg(g):\sym(\pg)\longrightarrow\sym(\pg), \qquad \lic_\pg A:=\unm S\circ\delta_{\mu_{\pg}}^t\delta_{\mu_{\pg}}(A)+A\Mm_{\mu_\pg} + \Mm_{\mu_\pg}A.  
\end{equation}

By using \eqref{mm2}, it is easy to check that $\lic_\pg$ satisfies the following properties (see \cite{PRP}):  
 
\begin{enumerate}[{\small $\bullet$}] 
\item $\lic_\pg$ is a self-adjoint operator.  

\item $\lic_\pg I=0$ since $\delta_{\mu_{\pg}}(I)=\mu_{\pg}$ and $
\delta_{\mu_{\pg}}^t\delta_{\mu_{\pg}}(I)= \delta_{\mu_{\pg}}^t(\mu_{\pg})=-4\Mm_{\mu_{\pg}}$.  Thus $\lic_\pg\sym(\pg)\subset\sym_0(\pg):=\{ A\in\sym(\pg):\tr{A}=0\}$ by self-adjointness.  

\item $\la \lic_\pg A,A\ra = \unm |\theta(A)\mu_\pg|^2 + 2\tr{\Mm_{\mu_\pg}A^2} = \unm \la\left(\theta(A)^2+\theta(A^2)\right)\mu_\pg,\mu_\pg\ra$, for any $A\in\sym(\pg)$.  

\item $\lic_\pg\sym(\pg)^K\subset\sym(\pg)^K$.  This follows by a straightforward computation using that $\Ad(z)\in\Aut(\ggo,\mu)$ and $\Ad(z)|_\pg$ is $\ip$-orthogonal for any $z\in K$.  

\item Moreover, $\lic_\pg\sym(\pg)^H\subset\sym(\pg)^H$ for any $g\in\mca^H$, where $H$ is any intermediate subgroup $K\subset H\subset N_G(K)$.  
\end{enumerate}

\begin{lemma}\label{dRicmu}\cite[Lemma 6.1]{PRP}
For any $T\in\sca^2(M)^G$, $T_o=\la A\cdot,\cdot\ra$, $A\in\sym(\pg)^K$, 
$$
d\ricci|_gT =  \unm d\overline{\ricci}|_I A=\unm \la\lic_\pg A\cdot,\cdot\ra.
$$ 
\end{lemma}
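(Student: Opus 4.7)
The plan is to compute both equalities directly from the moving bracket expression for the Ricci operator, namely \eqref{Ric} and the identification $\ricci(g_h)=\la h\Ricci_{\underline{h}\cdot\mu}h\cdot,\cdot\ra$. For the first equality $d\ricci|_g T=\unm d\overline{\ricci}|_I A$, I would take the path $h(t)=I+tA\in\sym_+(\pg)^K$ and observe that
$$g_{h(t)}=\la (I+tA)\cdot,(I+tA)\cdot\ra = g+2t\la A\cdot,\cdot\ra + O(t^2) = g+2tT+O(t^2),$$
so by the chain rule $d\overline{\ricci}|_I(A)=\left.\tfrac{d}{dt}\right|_0\ricci(g_{h(t)})=2\,d\ricci|_g(T)$. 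This accounts for the factor $\unm$ and reduces the lemma to the identity $d\overline{\ricci}|_I(A)=\la\lic_\pg A\cdot,\cdot\ra$.

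For the second equality, I would differentiate $\ricci(g_h)=\la h\Ricci_{\underline{h}\cdot\mu}h\cdot,\cdot\ra$ at $h=I$ along $A$. Since $\Ricci_{\mu}=\Mm_{\mu_\pg}-\unm\kil_\mu$, the boundary terms produced by differentiating the outer factors of $h$ contribute $A\Ricci_\mu+\Ricci_\mu A$. The chain rule applied to $h^{-1}\kil_\mu h^{-1}$ at $h=I$ gives $-A\kil_\mu-\kil_\mu A$, so the $\kil_\mu$-pieces combine with the boundary contributions to produce exactly $A\Mm_{\mu_\pg}+\Mm_{\mu_\pg}A$. It remains to handle the derivative of $\Mm_{h\cdot\mu_\pg}$, for which I would use \eqref{ddthmu} to get $\left.\tfrac{d}{dt}\right|_0 h(t)\cdot\mu_\pg=\theta(A)\mu_\pg$, hence $\left.\tfrac{d}{dt}\right|_0\Mm_{h(t)\cdot\mu_\pg}=d\Mm|_{\mu_\pg}(\theta(A)\mu_\pg)$.

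The crux is then to identify $d\Mm|_{\mu_\pg}(\theta(A)\mu_\pg)$ with $\unm S\circ\delta_{\mu_\pg}^t\delta_{\mu_\pg}(A)$. I would do this by differentiating the defining identity \eqref{mm2} for the moment map: for any $B\in\sym(\pg)$ and any variation $\nu\in\Lambda^2\pg^*\otimes\pg$,
$$\la d\Mm|_{\mu_\pg}(\nu),B\ra = \unm\la\theta(B)\mu_\pg,\nu\ra = -\unm\la\delta_{\mu_\pg}(B),\nu\ra = -\unm\la B,\delta_{\mu_\pg}^t(\nu)\ra,$$
which, since $B$ is symmetric, shows $d\Mm|_{\mu_\pg}(\nu)=-\unm S(\delta_{\mu_\pg}^t(\nu))$. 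Plugging in $\nu=\theta(A)\mu_\pg=-\delta_{\mu_\pg}(A)$ yields $d\Mm|_{\mu_\pg}(\theta(A)\mu_\pg)=\unm S\circ\delta_{\mu_\pg}^t\delta_{\mu_\pg}(A)$, which matches the first term in the definition \eqref{Cmup} of $\lic_\pg$. Assembling all three contributions recovers $\lic_\pg A$.

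The main obstacles are bookkeeping rather than conceptual: one must carefully track signs coming from $\delta_{\mu_\pg}=-\theta(\cdot)\mu_\pg$, check that the derivative of the Killing-form term $h^{-1}\kil_\mu h^{-1}$ at $h=I$ combines neatly with the conjugation terms from differentiating the outer $h$'s (so that the $\kil_\mu$ part of $\Ricci_\mu$ disappears and only $\Mm_{\mu_\pg}$ survives in the non-moment-map pieces), and verify that $\Ad(K)$-invariance is preserved throughout so that the answer indeed lies in $\sym(\pg)^K$. The latter is immediate from the already recorded property $\lic_\pg\sym(\pg)^K\subset\sym(\pg)^K$.
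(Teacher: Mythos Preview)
Your argument is correct. Note, however, that the paper does not actually prove this lemma: it is quoted verbatim as \cite[Lemma 6.1]{PRP} and used as a black box, so there is no ``paper's own proof'' to compare against here. Your computation is precisely the natural one and is essentially what the cited reference carries out: differentiate $\overline{\ricci}(h)=\la h\Ricci_{\underline{h}\cdot\mu}h\cdot,\cdot\ra$ at $h=I$ using \eqref{Ric} and \eqref{ddthmu}, observe that the Killing-form contribution $h(-\unm h^{-1}\kil_\mu h^{-1})h=-\unm\kil_\mu$ is constant in $h$ (equivalently, as you phrased it, the $\kil_\mu$-terms from the outer conjugation and from the derivative of $h^{-1}\kil_\mu h^{-1}$ cancel), and then identify the derivative of the moment map via the defining relation \eqref{mm2}. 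The factor $\unm$ in the first equality is exactly the chain-rule factor coming from $g_{h(t)}=g+2tT+O(t^2)$, as you wrote. One small clarification worth making explicit: your step $\la\theta(B)\nu,\mu_\pg\ra=\la\nu,\theta(B)\mu_\pg\ra$ uses that $\theta(B)$ is self-adjoint on $\Lambda^2\pg^*\otimes\pg$ whenever $B=B^t$, which holds because the adjoint of $\theta(A)$ with respect to the induced inner product is $\theta(A^t)$.
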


Since $\Delta_LT = 2d\ricci|_gT$ on $\tca\tca_g$ (see \cite[12.28']{Bss}), the following formula follows.  

\begin{corollary}\label{LL}\cite[Corollary 6.7]{PRP}
Let $M=G/K$ be a homogeneous space with $G$ compact, endowed with a reductive decomposition $\ggo=\kg\oplus\pg$.  Then the Lichnerowicz Laplacian $\Delta_L$ of any $G$-invariant Riemannian metric $g$ on $M$ is given by
$$
\Delta_LT = \la\lic_\pg A\cdot,\cdot\ra, \qquad\forall T\in\tca\tca_g^G,
$$
where $T_o=\la A\cdot,\cdot\ra\in \sym^2(\pg)^K \equiv \sca^2(M)^G$, $A\in\sym(\pg)^K$ and $\ip = g_o$.
\end{corollary}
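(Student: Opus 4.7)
The proof is essentially a two-line combination of two ingredients already assembled in the paper. First, from \cite[12.28']{Bss} (recalled in \S\ref{stab-sec}), for any TT-tensor $T\in\tca\tca_g$ one has the identity
$$
\Delta_L T \;=\; 2\, d\ricci|_g T,
$$
because the general formula relating the first variation of $\ricci$ to $\Delta_L$ contains, besides the $\Delta_L T$ term, correction terms of the form $\delta_g^*\delta_g T$ and $\nabla d(\tr_g T)$, all of which vanish when $T$ is divergence-free and trace-free. Second, Lemma \ref{dRicmu} computes $d\ricci|_g$ on $G$-invariant symmetric $2$-tensors via the moving-bracket approach: if $T\in\sca^2(M)^G$ corresponds to $A\in\sym(\pg)^K$ through $T_o=\la A\cdot,\cdot\ra$, then
$$
d\ricci|_g T \;=\; \unm\la \lic_\pg A\cdot,\cdot\ra,
$$
this being obtained by differentiating the formula \eqref{Ric} for $\Ricci_{\underline{h}\cdot\mu}$ at $h=I$ using \eqref{ddthmu}.

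Combining the two ingredients, the plan is as follows. Given $T\in\tca\tca_g^G$, by definition (see Remark \ref{G-TT-rem}) we have $T\in\tca\tca_g\cap\sca^2(M)^G$, so both ingredients apply simultaneously. Write $T_o=\la A\cdot,\cdot\ra$ with $A\in\sym(\pg)^K$, and chain the two identities:
$$
\Delta_L T \;=\; 2\, d\ricci|_g T \;=\; \la \lic_\pg A\cdot,\cdot\ra,
$$
which is the desired formula.

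The compactness hypothesis on $G$ enters only to guarantee the existence of a reductive decomposition and, together with \eqref{WTT}, to ensure that $G$-invariant TT-tensors form a nontrivial subspace where both ingredients can be brought to bear. A final point to verify is that the right-hand side $\la \lic_\pg A\cdot,\cdot\ra$ genuinely defines an element of $\sca^2(M)^G$, which follows from the property $\lic_\pg\sym(\pg)^K\subset\sym(\pg)^K$ listed among the bulleted properties of $\lic_\pg$ just before the lemma. I do not foresee any real obstacle: all the substance lives in Lemma \ref{dRicmu} and in the classical Besse identity, and the corollary is just the act of combining them on the common domain $\tca\tca_g^G$.
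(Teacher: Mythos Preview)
Your proposal is correct and follows precisely the paper's own argument: the sentence immediately preceding the corollary states ``Since $\Delta_LT = 2d\ricci|_gT$ on $\tca\tca_g$ (see \cite[12.28']{Bss}), the following formula follows,'' which is exactly your combination of the Besse identity with Lemma~\ref{dRicmu}. Your additional remarks on the role of compactness and on $\lic_\pg$ preserving $\sym(\pg)^K$ are reasonable clarifications but not needed for the deduction itself.
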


Recall from \S \ref{G-TT-sec} the computation of the space $\Ker\delta_g\cap\sca^2(M)^G$ of $G$-invariant divergence-free symmetric $2$-tensors.

\subsection{Second variation of $\scalar$}\label{SVSc}

As expected, at an Einstein metric, the second derivative of the scalar curvature is strongly related to the first derivative of the Ricci curvature.  

\begin{lemma}\label{ddSc}
Suppose that the background metric $g$ is Einstein, say $\ricci(g)=\rho g$.  Then, for any $T\in\sca^2(M)^G$, $T_o=\la A\cdot,\cdot\ra$, $A\in\sym(\pg)^K$,   
\begin{align*}
\scalar''_g(T,T) =& \unc\overline{\scalar}''_I(A,A) := \unc\left.\dsdt\right|_0 \overline{\scalar}(h(t)) \\ 
=& -\unm\la \lic_\pg A,A\ra + \rho\tr{A^2} = \unm\la (2\rho\id-\lic_\pg)A,A\ra, 
\end{align*}
where $h(t)\in\sym_+(\pg)^K$, $h(0)=I$, $h'(0)=A$.   
\end{lemma}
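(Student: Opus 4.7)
The plan is to differentiate the first-variation formula of Lemma~\ref{dSc} once more in $t$ and to identify the resulting derivative of the Ricci operator via Lemma~\ref{dRicmu}. I would pick the curve $h(t) = \exp(tA)$: it satisfies $h(0) = I$, $h'(0) = A$, enjoys the identity $h'(t)\,h(t)^{-1} \equiv A$ (so that the term $S(h'(t)h(t)^{-1})$ appearing in Lemma~\ref{dSc} is time-independent), and, when $\tr A = 0$, stays inside the unit-determinant slice $\sym_+(\pg)_1^K$, on which the Einstein condition turns $I$ into a critical point of $\overline{\scalar}$.

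Setting $f(t) := \overline{\scalar}(h(t))$ and applying Lemma~\ref{dSc}, one obtains $f'(t) = -2\la \Ricci_{\underline{h(t)}\cdot\mu}, A\ra$ since $S(A) = A$. Differentiating once more at $t = 0$ yields
\[
f''(0) \;=\; -2\la \tfrac{d}{dt}\big|_0 \Ricci_{\underline{h(t)}\cdot\mu},\, A\ra,
\]
reducing everything to computing $\tfrac{d}{dt}|_0 \Ricci_{\underline{h(t)}\cdot\mu}$.

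The key step is to express this derivative in terms of $\lic_\pg A$. The tensor $\overline{\ricci}(h) = \ricci(g_h)$ corresponds, under the identification $\sym^2(\pg)^K \leftrightarrow \sym(\pg)^K$ via $\ip$, to the operator $h\,\Ricci_{\underline{h}\cdot\mu}\,h$. The product rule at $h = I$, combined with the Einstein condition $\Ricci_\mu = \rho I$, gives
\[
d\overline{\ricci}|_I A \;=\; A\cdot\rho I + \tfrac{d}{dt}\big|_0 \Ricci_{\underline{h(t)}\cdot\mu} + \rho I \cdot A \;=\; 2\rho A + \tfrac{d}{dt}\big|_0 \Ricci_{\underline{h(t)}\cdot\mu}.
\]
On the other hand, Lemma~\ref{dRicmu} tells us that $d\overline{\ricci}|_I A$ corresponds to the operator $\lic_\pg A$, whence $\tfrac{d}{dt}|_0 \Ricci_{\underline{h(t)}\cdot\mu} = \lic_\pg A - 2\rho A$. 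Substitution then yields $f''(0) = -2\la \lic_\pg A, A\ra + 4\rho\tr A^2$, and dividing by two gives $\tfrac{1}{2}\overline{\scalar}''_I(A, A) = -\la \lic_\pg A, A\ra + 2\rho\tr A^2 = \la(2\rho\id - \lic_\pg)A, A\ra$, which is the asserted closed form.

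The most delicate point is expected to be the identification $\scalar''_g(T, T) = \tfrac12\overline{\scalar}''_I(A, A)$ itself. Because the map $h \mapsto g_h = \la h\cdot, h\cdot\ra$ is quadratic in $h$, the associated metric curve satisfies $\tfrac{d}{dt}|_0 g_{h(t)} = 2T$ and has a nontrivial second derivative at $t = 0$; a careful chain-rule argument at the Einstein critical point---combining this factor of two with the correction $\scalar'_g(g''_{h(t)}(0))$ computed via Lemma~\ref{dSc}---is what produces the claimed $\tfrac12$ prefactor.
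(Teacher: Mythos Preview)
Your proof is correct, but it follows a different route from the paper's. The paper works directly with the explicit formula \eqref{scal} for $\overline{\scalar}(h)$: along $h(t)=e^{tA}$ it differentiates $-\tfrac14|h(t)\cdot\mu_\pg|^2-\tfrac12\tr\kil_\mu h(t)^{-2}$ twice using \eqref{ddthmu}, obtaining $-\la\theta(A)^2\mu_\pg,\mu_\pg\ra-2\tr\kil_\mu A^2=-\la\delta_{\mu_\pg}^t\delta_{\mu_\pg}A,A\ra-2\tr\kil_\mu A^2$, and only then identifies this with $-2\la\lic_\pg A,A\ra+4\tr\Ricci_\mu A^2$ via the defining formula \eqref{Cmup} for $\lic_\pg$. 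You instead recycle the already-proved first variation (Lemma~\ref{dSc}) to get $f'(t)=-2\la\Ricci_{\underline{h(t)}\cdot\mu},A\ra$, and then compute $\tfrac{d}{dt}|_0\Ricci_{\underline{h(t)}\cdot\mu}$ by combining the product rule on $h\Ricci_{\underline h\cdot\mu}h$ with Lemma~\ref{dRicmu}. Your approach is tidier and more conceptual---it makes the identity $\scalar''_g=2\rho\id-2\,d\ricci|_g$ manifest without any new bracket computation---at the cost of importing Lemma~\ref{dRicmu} from \cite{PRP} as a black box; the paper's direct computation is self-contained and also yields the alternate expression $-\tfrac12|\theta(A)\mu_\pg|^2-\tr\kil_\mu A^2$ noted in the Remark. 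Your final paragraph on the prefactor $\tfrac12$ is a reasonable sketch; the paper handles this point only in the Remarks following the lemma, so you are not omitting anything the paper actually proves.
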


\begin{remark}
Alternatively, $\scalar''_g(T,T) = -\unc|\theta(A)\mu_\pg|^2 - \unm\tr{\kil_\mu A^2}$, which follows from the fact that $M_{\mu_\pg}-\unm\kil_\mu=\rho I$.  
\end{remark}

\begin{remark}
Since $I$ is a critical point of $\overline{\scalar}|_{\sym_1(\pg)^K}$, the value of $\overline{\scalar}''_I(A,A)$ is well defined if $\tr{A}=0$, in the sense that it can be computed using any curve $h(t)\in\sym_1(\pg)^K$ through $I$ with velocity $A$.  On the other hand, $\overline{\scalar}''_I(I,I)=4\scalar(I)$, so the formula also holds for $A=I$ and thus $\scalar''_I(A,A)$ is well defined for any $A$.    
\end{remark}

\begin{proof} 
If $h(t):=e^{tA}$, then $\ddt h(t)\cdot\mu_\pg = \theta(A)(h(t)\cdot\mu_\pg)$ by \eqref{ddthmu}, and so
\begin{align*}
\left.\dsdt\right|_0 \overline{\scalar}(h(t)) =& -\unm\left.\ddt\right|_0 \la\theta(A)h(t)\cdot\mu_\pg,h(t)\cdot\mu_\pg\ra + \left.\ddt\right|_0 \tr{Ae^{-2tA}\kil_\mu} \\ 
=& -\unm \la\theta(A)^2\mu_\pg,\mu_\pg\ra -\unm \la\theta(A)\mu_\pg,\theta(A)\mu_\pg\ra -2 \tr{A^2\kil_\mu}\\ 
=& -\la\delta_{\mu_\pg}^t\delta_{\mu_\pg}(A),A\ra - 2 \tr{\kil_\mu A^2}\\ 
=& -2\la \lic_\pg A,A\ra + 4\tr{\Mm_{\mu_\pg}A^2} - 2 \tr{\kil_\mu A^2}\\
=& -2\la \lic_\pg A,A\ra + 4\tr{\Ricci_\mu A^2}.
\end{align*}
We are using formula \eqref{Cmup} in the second last equality.  The fact that $\Ricci_\mu=\rho I$ concludes the proof.  
\end{proof}

\subsection{First variation of $\E$}\label{FV-E}
The following formula for the derivative of the Einstein operator follows from Lemma \ref{dRicmu}.  
 
\begin{lemma}\label{dE}
If $g$ is Einstein, say $\ricci(g) = \rho g$, then
$$
d\E|_gT = \unm d\overline{\E}|_IA = \left\la\left(\unm \lic_\pg A - \rho A\right)\cdot,\cdot\right\ra + \tfrac{1}{n}\rho(\tr{A})\ip,
$$
for any $T\in\sca^2(M)^G$, $T_o=\la A\cdot,\cdot\ra$, $A\in\sym(\pg)^K$.  
\end{lemma}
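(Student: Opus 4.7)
The plan is to differentiate the defining identity $\E(g')=\ricci(g')-\tfrac{\scalar(g')}{n}g'$ at $g$ via the product rule and then invoke the first-variation formulas already established in \S\ref{FVRc} and \S\ref{FVSc}. Concretely, for any smooth curve $g(t)\in\mca^G$ with $g(0)=g$ and $g'(0)=T$, we have
\[
d\E|_g T = d\ricci|_g T - \tfrac{1}{n}\bigl(\scalar'_g(T)\bigr)\,g - \tfrac{\scalar(g)}{n}\,T.
\]
Since $g$ is the background metric and $T_o=\la A\cdot,\cdot\ra$ with $A\in\sym(\pg)^K$, this reduces the whole statement to plugging three ingredients into the right-hand side.

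For the first term, Lemma \ref{dRicmu} gives $d\ricci|_gT = \tfrac{1}{2}\la\lic_\pg A\cdot,\cdot\ra$. For the middle term, the remark after Lemma \ref{dSc} yields $\scalar'_g(T) = -\la\ricci(g),T\ra_g$; using $\ricci(g)=\rho g$ and the fact that the inner product on $\sca^2(M)^G$ at $g$ corresponds to the trace pairing on $\sym(\pg)^K$ (see \eqref{metg}), this collapses to $\scalar'_g(T) = -\rho\tr{A}$. For the third term, the Einstein condition gives $\scalar(g)=n\rho$, so $\tfrac{\scalar(g)}{n}T = \rho\,\la A\cdot,\cdot\ra$.

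Substituting all three and identifying $g$ with $\ip$ in the $\sym^2(\pg)^K$ picture yields
\[
d\E|_g T = \left\la\bigl(\tfrac{1}{2}\lic_\pg A - \rho A\bigr)\cdot,\cdot\right\ra + \tfrac{1}{n}\rho(\tr{A})\ip,
\]
which is the claimed formula. The equality $d\E|_gT = \tfrac{1}{2}\,d\overline{\E}|_I A$ follows from the same factor-of-two convention relating $T = \la A\cdot,\cdot\ra$ to $g_h = \la h\cdot,h\cdot\ra$ at $h=I$ that already appeared in the remarks after Lemmas \ref{dSc} and \ref{dRicmu}. There is no substantive obstacle here; the only point worth verifying carefully is the sign and normalization in $\scalar'_g(T) = -\rho\tr{A}$, since the discrepancy between differentiating along $g+tT$ versus along $h(t) = e^{tA}$ in $\sym_+(\pg)^K$ is precisely what produces the global factor $\tfrac12$ between $d\E|_g$ and $d\overline{\E}|_I$.
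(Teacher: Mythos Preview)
Your proof is correct and follows exactly the approach the paper intends: the text simply states that the formula ``follows from Lemma \ref{dRicmu}'', and you have spelled out precisely that computation by applying the product rule to $\E(g')=\ricci(g')-\tfrac{\scalar(g')}{n}g'$ and inserting the known first variations of $\ricci$ and $\scalar$.
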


In particular, $d\E|_g=d\ricci|_g-\rho\id$ restricted to $(\RR g)^{\perp_g}$.

\subsection{Stability in terms of $\lic_\pg$}\label{stabCmup-sec}
We assume in this subsection that the background metric $g\in\mca^G$ is Einstein.  Under the identifications in terms of operators, the decomposition of the space of variations analogous to \eqref{tg2} is the following decomposition of the tangent space $T_I\sym_+(\pg)^K=\sym(\pg)^K$ at the identity map $I$:
\begin{equation}\label{tg3}
T_I\sym_+(\pg)^K = \left(\RR I \oplus T_I\Aut(G/K)\cdot I\right) \oplus^\perp W, 
\end{equation}
where $W$ is the $\ip$-orthogonal complement of $\RR I \oplus\Aut(G/K)\cdot I$ and $\Aut(G/K)$ acts on $\sym_+(\pg)^K$ according to the identification $\sym_+(\pg)^K\equiv\mca^G$.  Recall that if $G$ is compact, then $\tca\tca_g^G=\la W\cdot,\cdot\ra$ by Corollary \ref{tan4}.  Note that 
$$
W\subset\sym_0(\pg)^K, 
$$ 
and if in addition any of the conditions listed at the end of \S \ref{G-trivial} holds, then $W=\sym_0(\pg)^K$.  

It follows from \cite[Lemma 6.10]{PRP} that $d\overline{\ricci}|_IS(D)=2\rho S(D)$ for any $\underline{D}\in\Der(\ggo/\kg)$.  We therefore obtain from \eqref{tg} and Lemma \ref{dRicmu} that 
\begin{equation}\label{Cder}
\lic_\pg|_{T_I\Aut(G/K)\cdot I}=2\rho\id,  
\end{equation}
where $\lic_\pg$ is the operator attached to the metric $g$ as in \eqref{Cmup}.  

According to Definition \ref{stab-def}, it follows from Lemmas \ref{ddSc} and \ref{dE} that the $G$-stability and $G$-deformability types of the Einstein metric $g$ are both determined by the spectrum of the operator $\lic_\pg$ restricted to $W$, which coincides with the Lichnerowicz Laplacian in the compact case (see Corollary \ref{LL}).  All this is summarized in the following proposition.  

Let $\lambda_\pg=\lambda_{\pg}(g)$  and $\lambda_\pg^{max}=\lambda_{\pg}^{max}(g)$ denote, respectively,  the minimum and maximum eigenvalue of  $\lic_\pg=\lic_\pg(g)$ restricted to the subspace $W$ defined in \eqref{tg3}.  

\begin{proposition}\label{stab-Cmup}
Let $g$ be a $G$-invariant metric on a homogeneous space $M=G/K$, where $G$ is unimodular, endowed with a reductive decomposition $\ggo=\kg\oplus\pg$.  If $g$ is Einstein, say $\ricci(g)=\rho g$, then the following holds: 

\begin{enumerate}[{\rm (i)}] 
\item $g$ is $G$-stable if and only if $2\rho<\lambda_\pg$.

\item $g$ is $G$-unstable if and only if $\lambda_\pg<2\rho$.

\item $g$ is $G$-non-degenerate if and only if $G$-infinitesimally non-deformable, if and only if $2\rho\notin\Spec\left(\lic_\pg |_{W}\right)$.

\item $g$ is $G$-neutrally stable if and only if $\lambda_\pg=2\rho$.  

\item $g$ is $G$-strongly unstable if and only if $\lambda_\pg^{max}<2\rho$.
\end{enumerate}
\end{proposition}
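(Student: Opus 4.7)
The plan is to translate each condition in Definition \ref{stab-def} into a spectral condition on the self-adjoint operator $\lic_\pg|_W$ by direct diagonalization, using Lemmas \ref{ddSc} and \ref{dE} as the two translation mechanisms.

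The only non-formal step is to verify that $W$ is $\lic_\pg$-invariant. Since $\lic_\pg I = 0$ by the second bulleted property after \eqref{Cmup}, and $\lic_\pg$ restricts to $2\rho\,\id$ on $T_I\Aut(G/K)\cdot I$ by \eqref{Cder}, the subspace $\RR I \oplus T_I\Aut(G/K)\cdot I$ is $\lic_\pg$-invariant. Self-adjointness of $\lic_\pg$ (the first bulleted property after \eqref{Cmup}) then forces its $\ip$-orthogonal complement $W$ to be invariant as well, and so $\lic_\pg|_W$ is self-adjoint and admits an orthonormal eigenbasis with eigenvalues $\mu_1\leq\cdots\leq\mu_N$, where $\mu_1=\lambda_\pg$ and $\mu_N=\lambda_\pg^{max}$. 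This is the only structural point in the proof.

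Next, under the identification $T = \la A\cdot,\cdot\ra$ with $\la T,T\ra_g = \tr A^2 = \la A,A\ra$, Lemma \ref{ddSc} gives $\scalar''_g(T,T) = \la(2\rho\,\id - \lic_\pg)A,A\ra$ for every $A\in\sym(\pg)^K$, and hence in particular for $A\in W$. Diagonalizing, the signs of the scalars $2\rho - \mu_i$ then read off items (i), (ii), (iv), (v) verbatim from Definition \ref{stab-def}: strict negativity of all $2\rho - \mu_i$ is equivalent to $\lambda_\pg > 2\rho$ ($G$-stable); existence of some positive $2\rho - \mu_i$ is $\lambda_\pg < 2\rho$ ($G$-unstable); non-positivity of all with at least one zero is $\lambda_\pg = 2\rho$ ($G$-neutrally stable); and strict positivity of all $2\rho - \mu_i$ is $\lambda_\pg^{max} < 2\rho$ ($G$-strongly unstable).

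Finally for (iii), since $W \perp \RR I$ one has $W \subset \sym_0(\pg)^K$, so the trace term in Lemma \ref{dE} vanishes on $W$ and $d\E|_g T = \la(\tfrac{1}{2}\lic_\pg - \rho\,\id)A\cdot,\cdot\ra$. Hence $\Ker d\E|_g \cap W_g = 0$ is equivalent to $\lic_\pg|_W - 2\rho\,\id$ being injective, i.e.\ to $2\rho \notin \Spec(\lic_\pg|_W)$; but this same spectral condition characterizes non-degeneracy of the quadratic form $\scalar''_g|_{W\times W} = \la(2\rho\,\id - \lic_\pg)\cdot,\cdot\ra|_W$, so it simultaneously establishes the equivalence of $G$-non-degeneracy with $G$-infinitesimal non-deformability and the spectral characterization of either. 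No genuine obstacle appears beyond the invariance check of $W$; everything else is a one-line consequence of diagonalization.
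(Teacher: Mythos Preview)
Your proposal is correct and follows the same approach as the paper, which does not give an explicit proof but states just before the proposition that it follows from Definition \ref{stab-def} together with Lemmas \ref{ddSc} and \ref{dE}. You have simply made explicit the one structural point the paper leaves implicit, namely the $\lic_\pg$-invariance of $W$ (assembled from $\lic_\pg I=0$, \eqref{Cder}, and self-adjointness), after which the five equivalences are immediate from diagonalization.
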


\begin{remark}\label{Cmup-rem} 
It follows from Corollary \ref{LL} that $\lambda_L(g) \leq \lambda_\pg(g)$ (see \S \ref{stab-sec}).    
\end{remark}

In the case of a product homogeneous space, i.e., $G=G_1\times G_2$, $K=K_1\times K_2$, $K_i\subset G_i$, $\ggo_i=\kg_i\oplus\pg_i$ and $g=g_1+g_2$, where $g_i$ is a $G_i$-invariant metric on $M_i=G_i/K_i$, we obtain that $W=W_1\oplus W_2\oplus \RR A_0$, where 
$$
A_0:=\left(n_2I_{\pg_1},-n_1I_{\pg_2}\right), \qquad n_i:=\dim{M_i},
$$
and $\lic_\pg(g)=\lic_{\pg_1}(g_1)+\lic_{\pg_2}(g_2)$.   Since $A_0\in W\cap\Ker\lic_\pg$, one deduces that $\lambda_\pg\leq 0$ and so any positive scalar curvature homogeneous product Einstein metric $g$ is $G$-unstable.

\section{Naturally reductive case}\label{natred-sec}

We consider in this section the case when $g\in\mca^G$ is a {\it naturally reductive} metric on $M$ with respect to $G$ and some reductive decomposition $\ggo=\kg\oplus\pg$, i.e., the map $\ad_\pg{X}:\pg\rightarrow\pg$ is skew-symmetric for any $X\in\pg$ (see \eqref{adp} or \eqref{mup}).  Note that $G$ is necessarily unimodular.  We refer to \cite[\S 7]{PRP} and references therein for further information on naturally reductive metrics.  

The moment map takes the simpler form 
\begin{equation}\label{mm-nr} 
\Mm_{\mu_\pg}=\unc\sum (\ad_\pg{X_i})^2, 
\end{equation} 
and the operator $\lic_\pg$ also considerably simplifies in the naturally reductive setting (see Lemma \ref{dRicmu} and \cite[Lemma 7.18]{PRP}):  
\begin{equation}\label{Cp-def}
\lic_\pg A:=-\unm\sum[\ad_\pg{X_i},[\ad_\pg{X_i},A]], \qquad\forall A\in\sym(\pg)^K,
\end{equation}
where $\{ X_i\}$ is any orthonormal basis of $(\pg,\ip)$ and $\ip=g_o$.  Note that $\lic_\pg\geq 0$; in particular, $\lambda_\pg\geq 0$.  We also recall that $T_g\mca^G=\RR g\oplus\tca\tca_g^G$ in the compact case, i.e., $W=\sym_0(\pg)^K$.  

Since $\lic_\pg A=0$ if and only if $[A,\ad_\pg{\pg}]=0$, the following conditions are equivalent by results due to Kostant \cite{Kst2} (see \cite[\S 7.1]{PRP}):

\begin{enumerate}[{\small $\bullet$}] 
\item $\Ker\lic_\pg=\RR I$. 

\item $g$ is, up to scaling, the unique naturally reductive metric on $M$ with respect to $G$ and $\pg$.

\item $g$ is holonomy irreducible.   

\item $(\widetilde{M},g)$ is de Rham irreducible, where $\widetilde{M}$ denotes the simply connected cover of $M$.  

\item $\kg$ is $\ggo$-indecomposable, in the sense that there exist no nonzero ideals $\ggo_1$ and $\ggo_2$ of $\ggo$ such that $\ggo=\ggo_1\oplus\ggo_2$ and $\kg=\kg\cap\ggo_1\oplus\kg\cap\ggo_2$ (e.g., if $\ggo$ is indecomposable).  
\end{enumerate}

\subsection{Killing metrics on Lie groups}
For $M=G$ a compact semisimple Lie group, we consider the left-invariant metric $g_{\kil}\in\mca^G$ defined by $-\kil_{\ggo}$, where $\kil_{\ggo}$ denotes the Killing form of $\ggo$.  According to \eqref{mm-nr}, $M_{\mu_\pg}=-\unc\cas_{\ad,-\kil_\ggo}=-\unc I$, the Casimir operator acting on the adjoint representation of $\ggo$, and so $\ricci(g_{\kil})=\unc g_{\kil}$ by \eqref{Ric2}.  On the other hand, $\pg=\ggo$ and it follows from \eqref{Cp-def} that 
$$
\lic_\pg=\unm \cas_\tau, 
$$
where $\cas_\tau=\cas_{\tau,-\kil_\ggo}$ is the Casimir operator acting on the representation $\sym(\ggo)$ of $\ggo$ given by 
$$
\tau(X)A:=[\ad{X},A],
$$ 
i.e., $\cas_\tau=-\sum\tau(X_i)^2$, where $\{ X_i\}$ is a $-\kil_\ggo$-orthonormal basis of $\ggo$.  The first positive eigenvalue $\lambda_\tau$ of $\cas_\tau$ can therefore be computed by using representation theory.  We have collected in Table \ref{table1} the values of $\lambda_\tau$ for each simple Lie algebra $\ggo$, which together with Proposition \ref{stab-Cmup}, give the following.  Note that $\lambda_\pg=\unm\lambda_\tau$ and $2\rho=\unm$.  

\begin{table}
$$
\begin{array}{c|c|c|c|c}
\text{Type} & \ggo & n& \lambda_\tau & \text{Stab. type}  
\\[2mm] \hline \hline \rule{0pt}{14pt}
\textup{A}_1 &\sug(2) & & 3 & G\text{-stable} 
\\[2mm]  \hline \rule{0pt}{14pt}
\textup{A}_n &\sug(n+1) &n\geq 2 &  1  & G\text{-neut. stab.} 
\\ [2mm] \hline\rule{0pt}{14pt}
\textup{B}_3 &\sog(7) &  & \frac{6}{5}  & G\text{-stable} 
\\[2mm] \hline\rule{0pt}{14pt}
\textup{B}_n &\sog(2n+1) &  n\geq 4& \frac{2n+1}{2n-1}  & G\text{-stable}  
\\[2mm] \hline\rule{0pt}{14pt}
\textup{C}_n &\spg(n) & n\geq 2& \frac{n}{n+1}  & G\text{-unstable} 
\\[2mm] \hline\rule{0pt}{14pt}
\textup{D}_n& \sog(2n) &  n\geq 4 & \frac{n}{n-1} & G\text{-stable}  
\\[2mm] \hline\rule{0pt}{14pt}
\textup{E}_{6} & \eg_6 & & \frac{3}{2} & G\text{-stable}  
\\[2mm] \hline\rule{0pt}{14pt}
\textup{E}_{7} & \eg_7 && \frac{14}{9} & G\text{-stable}  
\\[2mm] \hline\rule{0pt}{14pt}
\textup{E}_{8} &\eg_8 && \frac{8}{5} & G\text{-stable} 
\\[2mm] \hline\rule{0pt}{14pt}
\textup{F}_{4} &\fg_4&& \frac{13}{9} & G\text{-stable}  
\\[2mm] \hline\rule{0pt}{14pt}
\textup{G}_{2} &\ggo_2 && \frac{7}{6} & G\text{-stable} 
\\[2mm] \hline\hline
\end{array}
$$
\caption{First eigenvalue $\lambda_\tau$ of the Casimir operator $\cas_\tau$ acting on $\sym(\ggo)$ with respect to $-\kil_\ggo$ for a compact simple $\ggo$} \label{table1}
\end{table}

\begin{proposition}\label{bi-inv}
Let $G$ be a connected compact simple Lie group and let $g_{\kil}$ denote the Killing metric, which is Einstein with $\ricci(g_{\kil})=\unc g_{\kil}$.     

\begin{enumerate}[{\small $\bullet$}] 
\item For $G=\SU(n)$, $n\geq 3$, the metric $g_{\kil}$ is $G$-neutrally stable with nullity $n^2-1$.   

\item $g_{\kil}$ is $G$-unstable on any $G=\Spe(n)$, $n\geq 2$, with coindex $\geq \frac{2n(2n-1)}{2}-1$. 

\item In all the remaining cases, $g_{\kil}$ is $G$-stable. 
\end{enumerate}
\end{proposition}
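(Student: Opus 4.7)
The plan is to apply Proposition \ref{stab-Cmup} directly, using the two identifications derived earlier in Section \ref{natred-sec} for the Killing metric: $\lic_\pg(g_\kil) = \unm \cas_\tau$ and $\ricci(g_\kil) = \unc g_\kil$. First, I would verify that the subspace $W$ in the decomposition \eqref{tg3} equals $\sym_0(\ggo)$. Since $g_\kil$ is bi-invariant on the simple group $G$, each operator $\ad X$ is skew-symmetric with respect to $-\kil_\ggo$, so $S(\ad X) = 0$ for every $X \in \ggo$ and Lemma \ref{tan3} gives $T_{g_\kil} \Aut(G) \cdot g_\kil = 0$; combined with $\kg = 0$ (so that every self-adjoint operator is automatically $K$-invariant), this forces $W = \sym_0(\ggo)$.

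With $2\rho = \unm$ and $\lic_\pg|_W = \unm \cas_\tau|_{\sym_0(\ggo)}$, Proposition \ref{stab-Cmup}(i), (ii), (iv) translate the stability questions into comparing the smallest positive eigenvalue $\lambda_\tau$ of $\cas_\tau$ against $1$: the metric $g_\kil$ is $G$-stable iff $\lambda_\tau > 1$, $G$-neutrally stable iff $\lambda_\tau = 1$, and $G$-unstable iff $\lambda_\tau < 1$. Inspecting Table \ref{table1} yields the claimed trichotomy: only $\sug(n+1)$ with $n \geq 2$ (i.e., $\SU(n)$ with $n \geq 3$) achieves $\lambda_\tau = 1$; only the family $\spg(n)$ yields $\lambda_\tau = \tfrac{n}{n+1} < 1$; in all other simple types $\lambda_\tau > 1$.

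To pin down the nullity and the coindex I would decompose $\sym(\ggo) \cong S^2 \ggo$ into $\ggo$-irreducibles and track the Casimir eigenvalue on each summand. For $\ggo = \sug(n)$ with $n \geq 3$, the cubic Casimir invariant $d(X,Y,Z) = \tr(X(YZ+ZY))$ furnishes a $\ggo$-equivariant embedding of the adjoint into $S^2 \sug(n)$, producing a copy of dimension $n^2 - 1$ on which $\cas_\tau$ acts as $1$; standard representation theory of $\sug(n)$ shows this is the only summand of $S^2 \sug(n)$ realizing the Casimir value $1$, so the nullity of $\lic_\pg - 2\rho\id$ on $W$ equals $n^2 - 1$ exactly. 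For $\ggo = \spg(n)$, the isomorphism $\spg(n) \cong S^2 \CC^{2n}$ lets one locate inside $\sym(\spg(n)) \cong S^2(S^2 \CC^{2n})$ an irreducible summand isomorphic to the primitive $\Spe(n)$-representation $\Lambda^2_0 \CC^{2n}$ of dimension $\tfrac{2n(2n-1)}{2} - 1$, carrying precisely the Casimir eigenvalue $\tfrac{n}{n+1}$ from Table \ref{table1}, which yields the stated lower bound for the coindex. The main obstacle is this final representation-theoretic step: one must extract the multiplicities of the extremal Casimir eigenvalues in $S^2 \ggo$ for $\sug(n)$ and $\spg(n)$, rather than merely their minimum values, whereas the stability trichotomy itself is an immediate consequence of Proposition \ref{stab-Cmup} and Table \ref{table1}.
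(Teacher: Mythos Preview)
Your proposal is correct and follows essentially the same approach as the paper: reduce to Proposition~\ref{stab-Cmup} via the identifications $\lic_\pg=\unm\cas_\tau$ and $2\rho=\unm$, then read off the trichotomy from Table~\ref{table1}. In fact you supply more detail than the paper does: the paper simply asserts the nullity $n^2-1$ and the coindex bound $\tfrac{2n(2n-1)}{2}-1$ as outputs of the representation-theoretic computation of $\Spec(\cas_\tau)$, whereas you explicitly identify the relevant irreducible summands of $S^2\ggo$ (the adjoint copy via the cubic invariant for $\sug(n)$, and $\Lambda^2_0\CC^{2n}$ for $\spg(n)$).
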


In particular, $g_{\kil}$ is a local maximum of $\scalar|_{\mca_1^G}$ in most of the cases.  The question of whether $g_{\kil}$ on $\SU(n)$ is a local maximum of $\scalar|_{\mca_1^G}$ or not is still open for $n\geq 4$.  It was proved in \cite{Jns} that it is not for $n=3$, while it is well known that it is a global maximum for $n=2$.  Concerning $\Spe(n)$, since $\lambda_\pg^{max}= \frac{2n+4}{2(n+1)}>\unm=2\rho$, $g_{\kil}$ is a saddle point of $\scalar|_{\mca_1^G}$.

This shows that the picture in the $G$-invariant setting is completely analogous to the general case studied by Koiso in \cite{Kso}, as described at the end of \S \ref{stab-sec}.  In particular, any bi-invariant metric on any compact simple Lie group $G$ is $G$-rigid, except possibly for $\SU(n)$, $n\geq 3$.  Nevertheless, it was proved in \cite[Theorem 22.3]{DrdGal} that on $\SU(n)$, $g_{\kil}$ is indeed $G$-rigid.

\subsection{A formula for $\lic_\pg$ in terms of structural constants}\label{strconst-sec}  
Let $M=G/K$ be a homogeneous space with $G$ compact and reductive decomposition $\ggo=\kg\oplus\pg$.   Given a non-degenerate $\ad{\ggo}$-invariant symmetric bilinear form $Q$ on $\ggo$ such that $Q(\kg,\pg)=0$ and $Q|_{\pg}>0$, we consider the metric $g_Q\in\mca^G$ whose value at $o$ is $Q|_{\pg}$.  Thus $g_Q$ is naturally reductive with respect to $G$ and $\pg$.  

\begin{remark}
According to \cite[Theorem 4]{Kst2} (see also \cite[p.4]{DtrZll}), if $\ggo=\pg+[\pg,\pg]$, then any $G$-invariant metric on $M$ which is naturally reductive with respect to $G$ and $\pg$ is given in this way for a unique $Q$.  
\end{remark}

Recall that $g$ is called {\it normal} when $Q>0$, and if in addition $G$ is semi-simple and $Q=-\kil_{\ggo}$, then $g$ is called {\it standard}.  In particular, if $G$ is simple, then $g_Q$ is necessarily standard (up to scaling).  

Given any $Q$-orthogonal decomposition $\pg=\pg_1\oplus\dots\oplus\pg_r$ in $\Ad(K)$-invariant and irreducible subspaces $\pg_1,\dots,\pg_r$ ($d_i:=\dim{\pg_i}$), we consider the corresponding structural constants given by,
$$
[ijk]:=\sum_{\alpha,\beta,\gamma} Q([X_\alpha^i,X_\beta^j], X_\gamma^k)^2,
$$
where $\{ X_\alpha^i\}$ is a $Q$-orthonormal basis of $\pg_i$.  Since $g_Q$ is naturally reductive relative to $G$ and $\pg$, the number $[ijk]$ is invariant under any permutation of $ijk$.  

Recall from \eqref{Ric2} that the Ricci operator of $g$ is given by $\Ricci(g_Q)=\Mm_{\mu_\pg}-\unm\kil_\mu$.  Since $\Mm_{\mu_\pg}$ is $\Ad(K)$-invariant, for each $k$ we have that the linear map $\Mm_{\mu_\pg}$ restricted to $\pg_k$ and composed with the orthogonal projection on $\pg_k$ is given by $m_kI_{\pg_k}$ for some $m_k\in\RR$.  It follows from \eqref{mm4} that
\begin{equation}\label{mkstr}
m_k = -\tfrac{1}{4d_k}\sum_{i,j} [ijk], \qquad\forall k=1,\dots,r;  
\end{equation}
indeed, 
\begin{align*}
m_kd_k =& \tr{\Mm_{\mu_\pg}|_{\pg_k}} = \sum Q( \Mm_{\mu_\pg}X_\gamma^k,X_\gamma^k)\\  
=& -\unm\sum Q(\mu_\pg(X_\gamma^k,X_\alpha^i),X_\beta^j)^2 +\unc\sum Q(\mu_\pg(X_\alpha^i,X_\beta^j),X_\gamma^k)^2 \\
=& -\tfrac{1}{2}\sum [kij] +\tfrac{1}{4}\sum [ijk] =-\tfrac{1}{4}\sum [ijk].  
\end{align*}
Note that this can alternatively be computed using \eqref{mm-nr}.  The irreducibility of $\pg_k$ also gives that $-\kil_\ggo$ restricted to $\pg_k$ equals $b_kQ|_{\pg_k}$ for some $b_k\in\RR$ for any $k$, and consequently, the restriction and projection of $\kil_\mu$ is given by $-b_kI_{\pg_k}$.  Note that $b_k\geq 0$, where equality holds if and only if $\pg_k\subset\zg(\ggo)$, and that if $g_Q$ is standard, then $b_k=1$ for all $k$.  We therefore obtain that 
\begin{equation}\label{rick-nr}
\Ricci(g_Q)|_{\pg_k}=\rho_kI_{\pg_k}, \qquad \rho_k = \tfrac{b_k}{2} -\tfrac{1}{4d_k}\sum_{i,j} [ijk],   
\end{equation}
and the Einstein equations become: $\Ricci(g_Q)=\rho I$ if and only if 
$$
\rho_k=\rho, \quad\forall k=1,\dots,r \quad\mbox{and}\quad \la\Ricci(g_Q)\pg_i,\pg_j\ra=0, \quad\forall i\ne j.  
$$
We now assume that the isotropy representation of the homogeneous space $M=G/K$ is multiplicity-free.  Thus the right-hand side Einstein conditions above automatically hold and 
$$
\left\{ \tfrac{1}{\sqrt{d_1}}I_{\pg_1},\dots, \tfrac{1}{\sqrt{d_r}}I_{\pg_r}\right\}
$$ 
is an orthonormal basis of $\sym(\pg)^K$.   Let $[\lic_\pg]$ denote the matrix of $\lic_\pg(g_Q)$ with respect to this basis.   

\begin{theorem}\label{Lpjk}
Let $g_Q\in\mca^G$ be the naturally reductive metric on $M=G/K$ ($G$ compact) attached to a non-degenerate $\ad{\ggo}$-invariant symmetric bilinear form $Q$ on $\ggo$, and assume that $G/K$ is multiplicity-free.  Then, the entries of the matrix $[\lic_\pg]$ are given by, 
$$
[\lic_\pg]_{kk} = 
\tfrac{1}{d_k}\sum_{\substack{j\ne k \\ i}} [ijk], \quad\forall k, 
\qquad 
[\lic_\pg]_{jk} =  
-\tfrac{1}{\sqrt{d_j}\sqrt{d_k}}\sum_{i} [ijk], \quad\forall j\ne k.  
$$
\end{theorem}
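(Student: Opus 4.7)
The plan is to reduce everything to the basic identity for $\lic_\pg$ in the naturally reductive case and then exploit the block structure of $I_{\pg_j}$ relative to the decomposition $\pg=\pg_1\oplus\cdots\oplus\pg_r$. Starting from \eqref{Cp-def} and using that $\ad_\pg X_i$ is skew-symmetric (so that $D_i:=[\ad_\pg X_i,\cdot\,]$ satisfies $D_i^{\,*}=-D_i$ on $\glg(\pg)$ with respect to the Hilbert--Schmidt inner product), I would first polarize to obtain
$$
\la \lic_\pg A,B\ra \;=\; \tfrac{1}{2}\sum_i \la [\ad_\pg X_i,A],[\ad_\pg X_i,B]\ra, \qquad\forall A,B\in\sym(\pg).
$$
Since the two sides of this identity are $Q$-invariant in the choice of orthonormal basis, I may take $\{X_i\}=\{X_\alpha^\ell\}$ adapted to the $Q$-orthogonal decomposition, and reduce the proof to evaluating it on $A=I_{\pg_j}$, $B=I_{\pg_k}$.

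Next, I would write $\ad_\pg X$ as a matrix of blocks $A_{ab}:\pg_b\to\pg_a$ and compute directly
$$
[\ad_\pg X,I_{\pg_j}]_{ab} \;=\; \delta_{bj}A_{aj}-\delta_{aj}A_{jb}.
$$
Thus $[\ad_\pg X,I_{\pg_j}]$ is supported in the $j$-th row and column of blocks and vanishes on the $(j,j)$-block. A direct block-by-block pairing, combined with the identity $A_{ba}^{t}=-A_{ab}$ coming from the skew-symmetry of $\ad_\pg X$, then yields
$$
\la [\ad_\pg X,I_{\pg_j}],[\ad_\pg X,I_{\pg_k}]\ra \;=\;
\begin{cases}
\,2\sum_{a\ne j}|A_{aj}|^{2}, & j=k,\\[1mm]
\,-2\,|A_{jk}|^{2}, & j\ne k.
\end{cases}
$$

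Summing over $X_\alpha^\ell$ for fixed $\ell$ gives $\sum_{\alpha}|A_{ab}^{(\ell,\alpha)}|^{2}=\sum_{\alpha,\beta,\gamma}Q(\mu_\pg(X_\alpha^\ell,X_\beta^b),X_\gamma^a)^{2}=[\ell b a]$, and then summing over $\ell$ and applying the permutation symmetry of $[ijk]$ (which holds by natural reductivity) produces
$$
\la \lic_\pg I_{\pg_j},I_{\pg_j}\ra=\sum_{i,\,a\ne j}[ija], \qquad
\la \lic_\pg I_{\pg_j},I_{\pg_k}\ra=-\sum_{i}[ijk]\quad(j\ne k).
$$
Finally, since the chosen basis $\{I_{\pg_k}/\sqrt{d_k}\}$ of $\sym(\pg)^K$ carries an extra factor $1/\sqrt{d_jd_k}$, dividing the two displayed expressions by $d_k$ and by $\sqrt{d_jd_k}$ respectively yields the stated matrix entries.

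The main obstacle is not conceptual but notational: tracking the $r\times r$ block structure of $\ad_\pg X$ together with the four possible cases coming from the two Kronecker deltas in the expression for $[\ad_\pg X,I_{\pg_j}]_{ab}$. The one substantive ingredient, beyond bookkeeping, is the skew-adjointness of $D_i$, which is exactly what natural reductivity buys us and is what converts $\lic_\pg=-\tfrac{1}{2}\sum D_i^{2}$ into a manifestly non-negative quadratic form whose bilinear polarization is easy to evaluate on idempotents.
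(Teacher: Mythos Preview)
Your proposal is correct and proceeds along the same lines as the paper's proof: both compute $\lic_\pg$ on the basis vectors $I_{\pg_j}$ by writing $\ad_\pg X$ in block form relative to $\pg=\pg_1\oplus\cdots\oplus\pg_r$, using skew-symmetry to relate off-diagonal blocks, and then identifying the traces of the resulting block products with the structural constants $[ijk]$.

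The one noteworthy difference is organizational. The paper computes the operator $\lic_\pg I_{\pg_l}$ itself as a block-diagonal map, introducing the auxiliary operators $E_{jk}=\sum_{i,\alpha}(\ad_\pg X_\alpha^i)_{jk}(\ad_\pg X_\alpha^i)_{jk}^t$ and $F_{jk}$; it then invokes the invariance $\lic_\pg\sym(\pg)^K\subset\sym(\pg)^K$ (hence the multiplicity-free hypothesis) to conclude that $E_{jk},F_{jk}$ are scalar multiples of the identity, and finally reads off the matrix entries from $\tr F_{jk}=\sum_i[ijk]$. Your route instead polarizes $\lic_\pg=-\tfrac12\sum D_i^2$ to the bilinear form $\la\lic_\pg A,B\ra=\tfrac12\sum_i\la[\ad_\pg X_i,A],[\ad_\pg X_i,B]\ra$ and evaluates it directly on $A=I_{\pg_j}$, $B=I_{\pg_k}$. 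This bypasses the need to show that $E_{jk},F_{jk}$ are scalar, since you never have to identify $\lic_\pg I_{\pg_l}$ as an operator; you only need its inner products against the $I_{\pg_k}$. Both arguments are short, but yours is marginally cleaner for exactly this reason. The paper's version, on the other hand, yields the slightly stronger intermediate fact that the $E_{jk},F_{jk}$ are scalar, which could be of independent use.
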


\begin{remark}
It is easy to check that the coordinates vector $[\sqrt{d_1},\dots,\sqrt{d_r}]^t$ of the identity map is indeed in the kernel of $[\lic_\pg]$.  Note that the structural constants of the form $[kkk]$ are not involved in the above formulas.  
\end{remark}

\begin{proof}
We fix any $Q$-orthonormal basis $\{ X_\alpha^i\}$ of each $\pg_i$ and denote  
$$
\ad_{\pg}{X_\alpha^i} = \left[\begin{matrix} 
\ad_{\pg_1}{X_\alpha^i}&(\ad_{\pg}{X_\alpha^i})_{12}&\cdots&(\ad_{\pg}{X_\alpha^i})_{1r}\\ 
-(\ad_{\pg}{X_\alpha^i})_{12}^t&\ad_{\pg_2}{X_\alpha^i}&\cdots&(\ad_{\pg}{X_\alpha^i})_{2r}\\ 
\vdots&\vdots&\ddots&\vdots\\ 
-(\ad_{\pg}{X_\alpha^i})_{1r}^t&-(\ad_{\pg}{X_\alpha^i})_{2r}^t&\cdots&\ad_{\pg_r}{X_\alpha^i} 
\end{matrix}\right], 
$$
where $(\ad_{\pg}{X_\alpha^i})_{jk}:\pg_k\rightarrow\pg_j$.  We also consider $E_{jk}:\pg_j\rightarrow\pg_j$ and $F_{jk}:\pg_k\rightarrow\pg_k$ defined by 
$$
E_{jk}:=\sum_{i,\alpha}(\ad_{\pg}{X_\alpha^i})_{jk}(\ad_{\pg}{X_\alpha^i})_{jk}^t,  \qquad
F_{jk}:=\sum_{i,\alpha}(\ad_{\pg}{X_\alpha^i})_{jk}^t(\ad_{\pg}{X_\alpha^i})_{jk}, \qquad \forall j<k.  
$$
For any diagonal block map 
$$
A:=\left[a_1I_{\pg_1}, a_2I_{\pg_2}, \dots, a_rI_{\pg_r} \right]\in\sym(\pg)^K,  
$$
a straightforward computation using \eqref{Cp-def} gives that the $k$-th block of $\lic_\pg A$ is given by
$$
\sum_{j<k}(a_k-a_j)F_{jk}+\sum_{k<j}(a_k-a_j)E_{kj}.  
$$
In particular, for each $l$, 
$$
\lic_\pg I_{\pg_l} = \left[-E_{1l},\dots,-E_{l-1,l}, \sum_{j=1}^{l-1}F_{jl}+ \sum_{j=l+1}^r E_{lj},-F_{l,l+1},\dots,-F_{lr}     \right]^t.  
$$
Since $\lic_\pg\sym(\pg)^K\subset\sym(\pg)^K$, this implies that $E_{jk}=e_{jk}I_{\pg_j}$ and $F_{jk}=f_{jk}I_{\pg_k}$ for all $j<k$, for some non-negative $e_{kj},f_{jk}\in\RR$.  But $\tr{E_{jk}}=\tr{F_{jk}}$, so 
$$
d_kf_{jk} = \tr{F_{jk}} = \sum_{i} [ijk], \qquad e_{jk}=\tfrac{d_k}{d_j}f_{jk},  \qquad \forall j<k,   
$$   
concluding the proof.  
\end{proof}

\section{Three standard infinite families}\label{unif-sec}

In this section, we assume that $M=G/K$ is one of the following: 
\begin{equation}\label{unif4}
\begin{array}{c}
\SU(nk)/\Se(\U(k)\times\dots\times\U(k)), \quad k\geq 1, \qquad \Spe(nk)/\Spe(k)\times\dots\times\Spe(k), \quad k\geq 1; \\ \\ 
\SO(nk)/\Se(\Or(k)\times\dots\times\Or(k)), \quad k\geq 3,  
\end{array}
\end{equation}
where the quotients are all $n$-times products with $n\geq 3$.  The standard block matrix reductive decomposition is given by 
$$
\ggo=\kg\oplus\pg_{12}\oplus\pg_{13}\oplus\dots\oplus\pg_{(n-1)n},
$$
where every $\pg_{ij}=\pg_{ji}$ (note that always $i\ne j$) has dimension $d=2k^2,4k^2,k^2$, respectively, and they are all $\Ad(K)$-irreducible and pairwise inequivalent.  Thus $G/K$ is multiplicity-free and $\dim{\mca^G}=\frac{n(n-1)}{2}$.  

It is easy to check that $[\pg_{ij},\pg_{kl}]_\pg=0$ if $\{ i,j\}$ and $\{ k,l\}$ are either equal or disjoint, and $[\pg_{ij},\pg_{ik}]_\pg$ is nonzero and it is contained in $\pg_{jk}$ for all $j\ne k$.  Moreover, a straightforward computation gives that any nonzero structural constant $[ijk]$ as in \S\ref{strconst-sec} is equal to the same $c=c(G,k,n)$, where $\frac{c}{d}$ is respectively given by     
\begin{equation}\label{unif1}
\frac{c}{d} \quad = \quad \frac{1}{2n}, \quad \frac{k}{2(nk+1)}, \quad \frac{k}{2(nk-2)}.  
\end{equation} 
We consider the standard or Killing metric $g_{\kil}$ on $G/K$, i.e., $Q=-\kil_\ggo$ (see \S\ref{strconst-sec}).  It follows from \eqref{rick-nr} that $g_{\kil}$ is Einstein with 
\begin{equation}\label{unif2}
2\rho=1-\frac{c}{d}(n-2).  
\end{equation}  
On the other hand, according to Theorem \ref{Lpjk}, 
$$
[\lic_\pg]_{(ij)(ij)} = \frac{c}{d}2(n-2), \qquad [\lic_\pg]_{(ij)(ik)} = -\frac{c}{d}, \quad\forall j\ne k, 
$$
and $[\lic_\pg]_{(ij)(kl)}=0$ otherwise.  This implies that 
$$
[\lic_\pg] = \frac{c}{d}\Big(2(n-2)I-\Adj(X)\Big), 
$$  
where $X=J(n,2,1)$ is the Johnson graph with parameters $(n,2,1)$ (see \cite[\S1.6]{GdsRyl}) and $\Adj(X)$ denotes its adjacency matrix.  Since the graph is strongly regular with parameters $(\frac{n(n-1)}{2},2(n-2),n-2,4)$ for any $n\geq 4$ (see \cite[\S10.1]{GdsRyl}), it follows from \cite[\S10.2]{GdsRyl} that the spectrum of $\Adj(X)$ is given by 
$$
2(n-2), \quad n-4, \quad -2,\qquad \mbox{with multiplicities}\qquad  1, \quad n-1, \quad \frac{n(n-3)}{2},  
$$    
respectively.  Thus $\Spec(\lic_\pg)=\{0,\lambda_\pg,\lambda_\pg^{max}\}$, where 
\begin{equation}\label{unif3}
\lambda_\pg = \frac{c}{d}n, \qquad \lambda_\pg^{max} = \frac{c}{d}2(n-1), \qquad n\geq 4,
\end{equation}  
and have multiplicities $n-1$ and $\frac{n(n-3)}{2}$, respectively.  

For $n=3$, $X$ is the complete graph on $3$ vertices and so the spectrum of $\Adj(X)$ equals $\{ 2, -1\}$, with multiplicities $1$ and $2$, respectively.  Thus $\lambda_\pg=\lambda_\pg^{max}=  \frac{c}{d}3$ and has multiplicity $2$ if $n=3$.  

The following proposition follows from a straightforward comparison between \eqref{unif1}, \eqref{unif2} and \eqref{unif3}.  

\begin{proposition}\label{unif-prop}
The standard metric $g_{\kil}$ on each of the homogeneous spaces given in \eqref{unif4} is always $G$-unstable, and so Ricci flow dynamically unstable.  The coindex and type of critical point are given in Table \ref{table2}.  They are all $G$-non-degenerate, and in particular $G$-rigid, except 
$$
\SU(4k)/\Se(\U(k)\times\U(k)\times\U(k)\times\U(k)), \quad k\geq 1, \qquad \Spe(10)/\Spe(2)^5, \qquad \Spe(6)/\Spe(1)^6.  
$$
\end{proposition}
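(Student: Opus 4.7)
The plan is to apply the spectral criteria of Proposition~\ref{stab-Cmup} using the explicit spectrum of $\lic_\pg$ and the Einstein constant already computed in this section: everything reduces to elementary comparisons of rational functions in $n$ and $k$ via \eqref{unif1}, \eqref{unif2} and \eqref{unif3}. First I would verify $\lambda_\pg<2\rho$ in each of the three families. Combining \eqref{unif2} with $\lambda_\pg=\tfrac{c}{d}\,n$ gives
$$
2\rho-\lambda_\pg \;=\; 1-\tfrac{c}{d}(2n-2),
$$
which on substitution of the three values of $\tfrac{c}{d}$ from \eqref{unif1} equals $\tfrac{1}{n}$, $\tfrac{k+1}{nk+1}$ and $\tfrac{k-2}{nk-2}$ respectively. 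Each is strictly positive under the stated range of $n,k$ (using $k\geq 3$ in the orthogonal family), so every $g_{\kil}$ is $G$-unstable by Proposition~\ref{stab-Cmup}(ii), and hence Ricci-flow dynamically unstable by the implication chain of \S\ref{G-stab-sec}.

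Next I would locate $2\rho$ relative to $\lambda_\pg^{max}$ to pin down the coindex, the type of critical point, and the degenerate exceptions. A parallel computation gives
$$
2\rho-\lambda_\pg^{max}\;=\;1-\tfrac{c}{d}(3n-4),
$$
which simplifies to $\tfrac{4-n}{2n}$, $\tfrac{2-k(n-4)}{2(nk+1)}$ and $-\tfrac{k(n-4)+4}{2(nk-2)}$ in the three families. These vanish precisely at $n=4$ in the $\SU$-case, at $(n,k)\in\{(5,2),(6,1)\}$ in the $\Spe$-case, and never in the $\SO$-case (since $k(n-4)+4>0$ for $k\geq 3$, $n\geq 3$), yielding exactly the three exceptional spaces listed in the statement. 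For the non-degenerate cases, if $\lambda_\pg^{max}>2\rho$ the coindex equals the multiplicity $n-1$ of $\lambda_\pg$ and $g_{\kil}$ is a saddle point, while if $\lambda_\pg^{max}<2\rho$ the coindex equals $\dim W=\tfrac{n(n-1)}{2}-1$ and $g_{\kil}$ is a local minimum of $\scalar|_{\mca_1^G}$. The case $n=3$, where $\lambda_\pg=\lambda_\pg^{max}=\tfrac{3c}{d}$ and the general formula from \eqref{unif3} does not apply, is covered by the same sign test, which shows $\tfrac{3c}{d}<2\rho$ in all three families and produces coindex $2$; filling out Table~\ref{table2} is then purely enumerative.

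There is no substantive obstacle: all the representation-theoretic input is packaged in the Johnson-graph spectrum of \eqref{unif3}, and the quantities $2\rho$, $\lambda_\pg$, $\lambda_\pg^{max}$ are explicit rational functions of $c/d$ whose denominators do not vanish in the stated ranges. The only mild care required concerns two singular strata: the coincidence $\lambda_\pg=\lambda_\pg^{max}$ at $n=3$, handled by the sign test above, and the verification that the $\SO$-family contributes no degenerate space, since the candidate equation $k(n-4)+4=0$ admits no solutions with $k\geq 3$ and $n\geq 3$.
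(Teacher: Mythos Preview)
Your proposal is correct and is exactly the ``straightforward comparison between \eqref{unif1}, \eqref{unif2} and \eqref{unif3}'' that the paper invokes as its proof; you have simply made the elementary inequalities explicit. One cosmetic point: your parenthetical ``$k(n-4)+4>0$ for $k\geq 3$, $n\geq 3$'' should read $n\geq 4$ (it fails at $n=3$, $k\geq 4$), but this is harmless since the formula \eqref{unif3} for $\lambda_\pg^{max}$ only applies for $n\geq 4$ and you correctly treat $n=3$ separately.
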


We do not know whether $g_{\kil}$ is still a local minimum in the $G$-degenerate cases or not.  

\begin{table}
$$
\begin{array}{c|c|c|c|c}
G/K & n & k & \text{Crit.point} & \text{coindex}  
\\[2mm] \hline \hline \rule{0pt}{14pt}
\SU(3k)/\Se(\U(k)^3) & 3 & k\geq 1 & \text{loc.min.} & 2 
\\[2mm]  \hline \rule{0pt}{14pt}
\SU(4k)/\Se(\U(k)^4) & 4 & k\geq 1 & G\text{-deg.} & 3
\\ [2mm] \hline\rule{0pt}{14pt}
\SU(nk)/\Se(\U(k)^n) & n\geq 5 & k\geq 1 & \text{saddle}  & n-1 
\\[2mm] \hline\hline\rule{0pt}{14pt}
\Spe(3k)/\Spe(k)^3 & 3 & k\geq 1 & \text{loc.min.} & 2  
\\[2mm] \hline\rule{0pt}{14pt}
\Spe(4k)/\Spe(k)^4 & 4 & k\geq 1 & \text{loc.min.} & 5
\\[2mm] \hline\rule{0pt}{14pt}
\Spe(5)/\Spe(1)^5 & 5 & 1 & \text{loc.min.} & 9  
\\[2mm] \hline\rule{0pt}{14pt}
\Spe(10)/\Spe(2)^5 & 5 & 2 & G\text{-deg.} & 4  
\\[2mm] \hline\rule{0pt}{14pt}
\Spe(6)/\Spe(1)^6 & 6 & 1 & G\text{-deg.} & 5   
\\[2mm] \hline\rule{0pt}{14pt}
\Spe(kn)/\Spe(k)^n & n\geq 5 & \text{otherwise} & \text{saddle} & n-1 
\\[2mm] \hline\hline\rule{0pt}{14pt}
\SO(3k)/\Se(\Or(k)^3) & 3 & k\geq 3 & \text{loc.min.} & 2   
\\[2mm] \hline
\SO(nk)/\Se(\Or(k)^n) & n\geq 4 & k\geq 3 & \text{saddle} & n-1   
\\[2mm] \hline\hline
\end{array}
$$
\caption{Coindex and critical point type of the $G$-unstable Einstein metric $g_{\kil}$ on each of the spaces given in \eqref{unif4}.} \label{table2}
\end{table}

\section{Jensen's metrics}\label{DZ-sec}

Given a simple Lie group $H$ and a semisimple subgroup $K\subset H$, we consider the $\kil_\hg$-orthogonal decomposition $\hg = \ag\oplus\kg$ and the left-invariant metrics on $H$ defined by 
$$
g_t=-\kil_\hg|_{\ag} + t(-\kil_\hg)|_{\kg}, \qquad t>0.   
$$
Thus $g_1$ is the Killing metric on $H$.  On the other hand, it was proved in \cite{Zll} (see also \cite[Theorem 1]{DtrZll}) that for each $t\ne 1$, the metric $g_t$ is naturally reductive with respect to $G=H\times K$ (acting on $H$ by $(h,k)\cdot p:=hpk^{-1}$) and the reductive decomposition 
$$
\ggo=\Delta\kg\oplus\pg_t,  \qquad \pg_t:= \pg_{\ag}\oplus \pg_\kg, \qquad
\pg_{\ag}:=(\ag,0), \qquad \pg_\kg:=\{ (\tfrac{t}{1-t}Z,-Z):Z\in\kg\}.  
$$
Indeed, $g_t$ is identified with $g_{Q_t}$, where $Q_t$ is the non-degenerate $\ad{\ggo}$-invariant bilinear symmetric form on $\ggo=(\hg,0) \oplus (0,\kg)$ given by 
$$
Q_t:=-\kil_\hg +\tfrac{t}{1-t}(-\kil_\hg)|_{\kg},   
$$
since for any $Z\in\kg$, the $Q_t$-orthogonal projection of $(0,Z)$ on $\pg_t$ is $(t-1)(\tfrac{t}{1-t}Z,-Z)$.  Note that $g_t$ is normal (i.e., $Q_t>0$) if and only if $t<1$.  If $\kg=\kg_1 \oplus \dots \oplus \kg_r$ is a $\kil_\hg$-orthogonal decomposition in simple ideals of $\kg$, then 
\begin{equation}\label{pqdec}
\pg_t:= \pg_{\ag}\oplus \pg_1\oplus\dots\oplus\pg_r, \qquad \pg_i:=\{ (\tfrac{t}{1-t}Z,-Z):Z\in\kg_i\}, \quad i=1,\dots,r,
\end{equation}
is an $\Ad(\Delta K)$-invariant $Q_t$-orthogonal decomposition of $\pg_t$.  
 
We assume from now on that $\ag$ is $\Ad(K)$-irreducible (i.e., $H/K$ is isotropy irreducible) and that for some constant $c$, $\kil_{\kg_i}=c\kil_\hg|_{\kg_i}$ for any $i=1,\dots,r$.  In particular, the summands in \eqref{pqdec} are all $\Ad(\Delta K)$-irreducible and pairwise inequivalent, so $\dim{\mca^G_1}=r$.  It is easy to check that the only nonzero structural constants are $[jjj]$, $[j\ag\ag]$ and $[\ag\ag\ag]$ (see \S\ref{strconst-sec}), which are next computed.  

\begin{lemma}\label{ijkDZ}
For each $j=1,\dots,r$, 
$$
[jjj] =\tfrac{(2t-1)^2}{t}cd_j, \qquad [j\ag\ag] = t(1-c)d_j, \qquad [\ag\ag\ag] = d-2(1-c)k, 
$$
where $d_j:=\dim{\pg_j}=\dim{\kg_j}$, $d:=\dim{\pg_{\ag}}=\dim{\ag}$ and $k:=\dim{\kg}$.   
\end{lemma}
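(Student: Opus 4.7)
The key is to translate the structural constants, which are defined using a $Q_t$-orthonormal basis of $\pg_t$, back into Killing-form data on $\hg$. I will exploit the linear isomorphism $\phi\colon\kg\to\pg_\kg$, $\phi(Z):=(\tfrac{t}{1-t}Z,-Z)$, and compute everything in terms of $(-\kil_\hg)$-orthonormal bases of $\kg_j$ and $\ag$.

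\textbf{Step 1 (bases).} A direct computation with the explicit form of $Q_t$ gives
\[
Q_t(\phi(Z),\phi(W))=\tfrac{t}{(1-t)^2}\,(-\kil_\hg)(Z,W),\qquad Z,W\in\kg_j,
\]
and $Q_t((X,0),(Y,0))=(-\kil_\hg)(X,Y)$ for $X,Y\in\ag$. Hence, if $\{e_\alpha\}$ is $(-\kil_\hg)$-orthonormal in $\kg_j$ and $\{X_\mu\}$ is $(-\kil_\hg)$-orthonormal in $\ag$, then $\{\tfrac{1-t}{\sqrt t}\,\phi(e_\alpha)\}$ and $\{(X_\mu,0)\}$ are $Q_t$-orthonormal bases of $\pg_j$ and $\pg_\ag$ respectively.

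\textbf{Step 2 (the bracket $\mu_{\pg_t}$).} I work out the projection onto $\pg_t$ along $\Delta\kg$: writing an element of $\hg\oplus\kg$ as $(X_1,X_2)$ with $X_1=X_1^\ag+X_1^\kg$, a routine decomposition $(X_1,X_2)=(X_1^\ag,0)+(a,a)+\phi(b)$ gives $b=(1-t)(X_1^\kg-X_2)$. Applied to $[\phi(Z),\phi(W)]=(\tfrac{t^2}{(1-t)^2}[Z,W],[Z,W])$ for $Z,W\in\kg_j$, the $\pg_\ag$-component vanishes and $b=\tfrac{2t-1}{1-t}[Z,W]$, so
\[
\mu_{\pg_t}(\phi(Z),\phi(W))=\phi\bigl(\tfrac{2t-1}{1-t}[Z,W]\bigr).
\]
For $Z\in\kg_j$ and $X\in\ag$, since $[\kg,\ag]\subset\ag$, we get $\mu_{\pg_t}(\phi(Z),(X,0))=(\tfrac{t}{1-t}[Z,X],0)\in\pg_\ag$. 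Brackets of type $\kg_i$ with $\kg_j$, $i\neq j$, vanish because the $\kg_i$'s are pairwise commuting ideals of $\kg$, which confirms that only $[jjj]$ and $[j\ag\ag]$ can be nonzero.

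\textbf{Step 3 (reduction to Casimir-type sums).} Assembling Steps~1 and~2 with the normalizations, the structural constants reduce to
\[
[jjj]=\tfrac{(2t-1)^2}{t}\sum_{\alpha,\beta,\gamma}(-\kil_\hg)([e_\alpha,e_\beta],e_\gamma)^2,\qquad
[j\ag\ag]=t\sum_{\alpha,\mu,\nu}(-\kil_\hg)([e_\alpha,X_\mu],X_\nu)^2.
\]
For the first sum, skew-symmetry of $\ad_{\kg_j} e_\alpha$ and completeness of $\{e_\gamma\}$ turn it into $\tr\bigl(\cas_{\ad,-\kil_\hg|_{\kg_j}}\bigr)$; using $\kil_{\kg_j}=c\,\kil_\hg|_{\kg_j}$ and the standard fact that the Casimir of the adjoint representation with respect to minus the Killing form is the identity, this Casimir is $cI$, hence the sum equals $c\,d_j$.

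\textbf{Step 4 (the mixed sum).} For the second sum, I use that $\ad_\ag e_\alpha$ is skew with respect to $-\kil_\hg|_\ag$ to write it as $\sum_\alpha -\tr\bigl((\ad_\ag e_\alpha)^2\bigr)$. The crucial identity is that, for $Z\in\kg_j$, the operator $\ad_\hg Z$ preserves the decomposition $\hg=\ag\oplus\kg_1\oplus\cdots\oplus\kg_r$ and vanishes on every $\kg_i$ with $i\neq j$ (because $\kg_j$ and $\kg_i$ are commuting ideals of $\kg$). Therefore
\[
\kil_\hg(Z,Z)=\tr_\hg\bigl((\ad_\hg Z)^2\bigr)=\tr_\ag\bigl((\ad_\ag Z)^2\bigr)+\kil_{\kg_j}(Z,Z),
\]
yielding $\tr_\ag(\ad_\ag Z)^2=(1-c)\kil_\hg(Z,Z)$. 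Summing over the $(-\kil_\hg)$-orthonormal basis $\{e_\alpha\}$ gives $\sum_\alpha-\tr_\ag(\ad_\ag e_\alpha)^2=(1-c)d_j$, and the claimed formulas follow.

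\textbf{Main obstacle.} The delicate point is Step~2: getting the projection onto $\pg_t$ along $\Delta\kg$ correct, and then faithfully tracking the scaling factors coming from the non-isometric identification $\phi$, so that the final answers turn out to be in the neat form $\tfrac{(2t-1)^2}{t}cd_j$ and $t(1-c)d_j$. Once this bookkeeping is done, the two trace identities in Steps~3--4 close the computation quickly.
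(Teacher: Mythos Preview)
Your proof is correct and follows essentially the same route as the paper: both reduce $[jjj]$ to the Casimir identity $\cas_{\ad,-\kil_{\kg_j}}=I_{\kg_j}$ combined with the scaling $\kil_{\kg_j}=c\,\kil_\hg|_{\kg_j}$, and both extract the factor $(1-c)$ for $[j\ag\ag]$ from the Killing form splitting on $\kg_j$. The only cosmetic difference is that for $[j\ag\ag]$ the paper states the operator identity $\sum_i a_j(X_i)^t a_j(X_i)=(1-c)I_{\kg_j}$ (summing over a $-\kil_\hg$-orthonormal basis of $\ag$), whereas you derive the dual scalar version $\tr_\ag(\ad_\ag Z)^2=(1-c)\kil_\hg(Z,Z)$ (for $Z\in\kg_j$); these are equivalent. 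Your explicit treatment of the projection $\mu_{\pg_t}$ and the orthonormal-basis bookkeeping in Steps~1--2 fills in exactly what the paper dismisses as ``straightforward computations.''
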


\begin{proof}
These are straightforward computations which use for $[jjj]$ that 
$$
I_{\kg_j}=\cas_{\ad,-\kil_{\kg_j}} =-\sum \left(\ad_{\kg_j}{\tfrac{1}{\sqrt{c}}Z_i^j}\right)^2, 
$$
where $\{ Z_i^j\}$ is any $-\kil_{\hg}$-orthonormal basis of $\kg_j$ (recall that $[jjj]=-\sum\limits_\alpha\tr(\ad{X_\alpha^j}|_{\pg_j})^2$ for any orthonormal basis $\{ X_\alpha^j\}_{\alpha=1}^{d_j}$ of $\pg_j$), and for $[j\ag\ag]$ and $[\ag\ag\ag]$ that 
$$
\sum a_j(X_i)^ta_j(X_i) = (1-c)I_{\kg_j}, \qquad\forall j=1,\dots,r,
$$
where 
$$
\ad_\hg{X_i} = \left[\begin{matrix} 
\ad_\ag{X_i} & a_0(X_i) &\cdots& a_r(X_i) \\ 
-a_0(X_i)^t & &&\\ 
\vdots & &0&\\
-a_r(X_i)^t & &&\\
\end{matrix}\right]  
$$
and $\{ X_i\}$ is a $-\kil_{\hg}$-orthonormal basis of $\ag$.  
\end{proof}

According to \cite[Corollary 2, p.44]{DtrZll}, if $t\ne 1$, then $\Ricci(g_t)=\rho I$ if and only if 
$$
t=t_E:=\frac{dc}{(d+2k)(1-c)}, \qquad 2\rho = \frac{c}{2t_E}+\frac{(1-c)t_E}{2}.
$$
We know from \cite[Theorem 11, (ii), p.35]{DtrZll} that 
$$
c<\frac{d+2k}{2d+2k},  \qquad\mbox{that is}, \qquad  t_E<1,
$$
as the exception $\spg(n-1)\subset\spg(n)$ does not appear in this case (see the last paragraph of the proof of \cite[Corollary 2, p.44]{DtrZll}).  In particular, $g_{t_E}$ is normal with respect to $G$ and $\pg_{t_E}$.  

It follows from Theorem \ref{Lpjk} and Lemma \ref{ijkDZ} that the matrix of the Lichnerowicz Laplacian $\lic_\pg(g_{t_E})$ relative to the orthonormal basis
$$
\left\{ \tfrac{1}{\sqrt{d}}I_{\pg_\ag}, \tfrac{1}{\sqrt{d_1}}I_{\pg_1},\dots,\tfrac{1}{\sqrt{d_r}}I_{\pg_r}\right\}, 
$$
of $\sym(\pg_{t_E})^{\Delta K}$ is given by 
$$
[\lic_\pg]=t_E(1-c)\left[\begin{matrix} 
\tfrac{k}{d}&-\frac{\sqrt{d_1}}{\sqrt{d}}&\cdots &-\frac{\sqrt{d_r}}{\sqrt{d}}\\ 
-\frac{\sqrt{d_1}}{\sqrt{d}}&1&\dots&0\\ 
\vdots&\vdots&\ddots&\vdots\\ 
-\frac{\sqrt{d_r}}{\sqrt{d}}&0&\cdots&1 
\end{matrix}\right].   
$$
Since the characteristic polynomial of $\frac{1}{t_E(1-c)}\lic_\pg$ is $f(x)=x(x-1)^{r-1}(x-(1+\frac{k}{d}))$, we obtain that 
$$
\Spec(\lic_\pg) = \left\{ 0, \; t_E(1-c), \;  t_E(1-c)(1+\tfrac{k}{d})\right\},  
$$
with multiplicities $1,r-1,1$, respectively, and so
$$
\left\{\begin{array}{ll} 
\lambda_\pg=t_E(1-c), \quad \lambda_\pg^{max}=t_E(1-c)(1+\tfrac{k}{d}) & \qquad r\geq 2, \\ \\
\lambda_\pg=\lambda_\pg^{max}=t_E(1-c)(1+\tfrac{k}{d}), & \qquad r=1.  
\end{array}\right.
$$

\begin{proposition}\label{J}
Every $g_{t_E}$ is $G$-unstable with coindex $r$ (in particular, $g_{t_E}$ is always a local minimum).  
\end{proposition}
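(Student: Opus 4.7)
The plan is to read off everything from the spectrum of $\lic_\pg(g_{t_E})$ computed in the paragraph immediately preceding the proposition and from Proposition \ref{stab-Cmup}. Since $g_{t_E}$ is naturally reductive with respect to $G$ and $\pg_{t_E}$, we are in the situation of \S\ref{natred-sec}, so $W_{g_{t_E}} = \sym_0(\pg_{t_E})^{\Delta K}$, which has dimension $r$ (one less than $\dim\sym(\pg_{t_E})^{\Delta K} = r+1$, corresponding to removing $\RR I$). The zero eigenvalue of $\lic_\pg$ (with multiplicity $1$) is carried by $I$; hence on $W_{g_{t_E}}$ the operator $\lic_\pg$ has exactly the two positive eigenvalues
$$
\lambda_\pg = t_E(1-c), \qquad \lambda_\pg^{max} = t_E(1-c)\left(1+\tfrac{k}{d}\right),
$$
with multiplicities $r-1$ and $1$ respectively (when $r=1$ only the second survives).

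In view of Proposition \ref{stab-Cmup}, the whole statement reduces to verifying the single inequality $\lambda_\pg^{max} < 2\rho$: indeed, once this holds, the operator $2\rho\,\id - \lic_\pg$ is positive definite on all of $W_{g_{t_E}}$, so by Lemma \ref{ddSc} we obtain $\scalar''_{g_{t_E}}|_{W\times W} > 0$, i.e.\ $g_{t_E}$ is $G$-strongly unstable of coindex $r$, and in particular a local minimum of $\scalar|_{\mca_1^G}$.

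I would then carry out the comparison by plugging in the values
$$
t_E = \tfrac{dc}{(d+2k)(1-c)}, \qquad 2\rho = \tfrac{c}{2t_E} + \tfrac{(1-c)t_E}{2},
$$
from \cite[Corollary 2, p.44]{DtrZll}. A direct substitution gives
$$
\lambda_\pg^{max} = \tfrac{c(d+k)}{d+2k}, \qquad 2\rho = \tfrac{(d+2k)(1-c)}{2d} + \tfrac{dc}{2(d+2k)},
$$
and clearing denominators turns $\lambda_\pg^{max} < 2\rho$ into the equivalent polynomial inequality $2c(d+k)(d+2k) < (d+2k)^2$, i.e.\ $c < \tfrac{d+2k}{2(d+k)}$.

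The only non-routine step is recognizing that this last inequality is precisely the bound already recorded in the excerpt just before the proposition, namely the fact from \cite[Theorem 11, (ii), p.35]{DtrZll} that $c < \tfrac{d+2k}{2d+2k}$ (equivalently $t_E < 1$) holds under the standing assumptions (the excluded exception $\spg(n-1)\subset\spg(n)$ being ruled out). So the potential obstacle is not computational but conceptual: one must point out that the very condition which ensures the existence of the Einstein metric $g_{t_E}$ in the normal range is exactly what forces $\lambda_\pg^{max} < 2\rho$. Once this is observed, the proof concludes.
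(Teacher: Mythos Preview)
Your proposal is correct and follows essentially the same line as the paper: both reduce the claim to the single inequality $\lambda_\pg^{max}<2\rho$ and show it is equivalent to $t_E<1$ (equivalently $c<\tfrac{d+2k}{2d+2k}$), then invoke the D'Atri--Ziller bound. The only difference is cosmetic: the paper keeps $t_E$ symbolic and notices directly that $t_E(1-c)\bigl(\tfrac{1}{2}+\tfrac{k}{d}\bigr)=\tfrac{c}{2}$, whereas you substitute the value of $t_E$ and clear denominators; both routes yield the same equivalence.
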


\begin{proof} 
We have that
$$
\lambda_\pg^{max} = t_E(1-c)(1+\tfrac{k}{d}) < 2\rho = \frac{c}{2t_E}+\frac{(1-c)t_E}{2},
$$
if and only if
$$
0 < \frac{c}{2t_E} - t_E(1-c)(\unm+\tfrac{k}{d}) = \frac{c}{2t_E} - \frac{c}{2},
$$
if and only if $t_E<1$, as was to be shown.  
\end{proof}

If $\mca^H$ denotes the huge space of all left-invariant metrics on $H$, then $\mca^G$ is identified with the subset of $\mca^H$ of those metrics which are in addition $K$-invariant.  In particular, the Einstein metric $g_{t_E}$ is also $H$-unstable, that is, unstable as a left-invariant metric on $H$, and so Ricci flow dynamically unstable.  Recall that the $H$-stability type of the Killing metric $g_1$ on the Lie group $H$ has been established in Proposition \ref{bi-inv}.   

It follows from the lists of isotropy irreducible homogeneous spaces given in \cite[Tables 7.102, 7.106, 7.107]{Bss} that Proposition \ref{J} provides at least one $H$-unstable Einstein left-invariant metric on any simple Lie group, except $\Spe(2n+1)$, $n\geq 4$ and $\SO(n)$ for some odd $n$'s.  

The only cases $K\subset H$ with coindex $\geq 2$ (i.e., $K$ non-simple) are (see \cite[p.46]{DtrZll}):
\begin{align*}
&\SO(n)\times\SO(n)\subset\SO(2n), \qquad \Spe(n)\times\Spe(n)\subset\Spe(2n), \\ 
&\SU(n)\times\SU(n)\subset\SU(n^2) \quad (\text{tensor product}), \\ 
&\SU(3)\times\SU(3)\times\SU(3)\subset E_6 \qquad \Spe(3)\times G_2\subset E_7.  
\end{align*}

\end{document}